\patchcmd{\abstract}{\scshape\abstractname}{\textbf{\abstractname}}{}{}
\DeclareMathAlphabet{\mathcal}{OMS}{cmsy}{m}{n}
\DeclareSymbolFont{operators}{OT1}{ztmcm}{m}{n}
\DeclareSymbolFont{letters}{OML}{ztmcm}{m}{it}
\DeclareSymbolFont{symbols}{OMS}{ztmcm}{m}{n}
\DeclareSymbolFont{largesymbols}{OMX}{ztmcm}{m}{n}
\DeclareSymbolFont{bold}{OT1}{ptm}{bx}{n}
\DeclareSymbolFont{italic}{OT1}{ptm}{m}{it}
\DeclareMathSymbol{\omicron}{0}{operators}{`\o}
\DeclareMathAlphabet{\mathpzc}{OT1}{pzc}{m}{it}
\DeclareSymbolFont{operators}{OT1}{txr}{m}{n}
\def\operator@font{\mathgroup\symoperators}
\DeclareSymbolFont{italic}{OT1}{txr}{m}{it}
\DeclareSymbolFontAlphabet{\mathrm}{operators}
\DeclareMathAlphabet{\mathbf}{OT1}{txr}{bx}{n}
\DeclareMathAlphabet{\mathit}{OT1}{txr}{m}{it}
\SetMathAlphabet{\mathit}{bold}{OT1}{txr}{bx}{it}
\DeclareSymbolFont{letters}{OML}{txmi}{m}{it}
\DeclareSymbolFont{lettersA}{U}{txmia}{m}{it}
\DeclareSymbolFontAlphabet{\mathfrak}{lettersA}
\DeclareSymbolFont{symbols}{OMS}{txsy}{m}{n}
\renewcommand\abstractname{\scshape\bfseries Abstract}
\renewenvironment{proof}[1][\proofname]{\par \pushQED{\qed} \normalfont
  \topsep6\p@\@plus6\p@ \trivlist \itemindent\z@
  \item[\hskip\labelsep\bfseries
    #1\@addpunct{.}]\ignorespaces
}{
  \popQED\endtrivlist\@endpefalse
}
    \renewcommand{\theequation}{{\thesection}.\@arabic\c@equation} 
\def\section{\@ifstar\unnumberedsection\numberedsection}
\def\numberedsection{\@ifnextchar[
  \numberedsectionwithtwoarguments\numberedsectionwithoneargument}
\def\unnumberedsection{\@ifnextchar[
  \unnumberedsectionwithtwoarguments\unnumberedsectionwithoneargument}
\def\numberedsectionwithoneargument#1{\numberedsectionwithtwoarguments[#1]{#1}}
\def\unnumberedsectionwithoneargument#1{\unnumberedsectionwithtwoarguments[#1]{#1}}
\def\numberedsectionwithtwoarguments[#1]#2{%
  \ifhmode\par\fi
  \removelastskip
  \vskip 4ex\goodbreak
  \refstepcounter{section}%
  \noindent
  \begingroup
  \leavevmode\centering\scshape\bfseries
  \thesection.
  #2
  \par
  \endgroup
  \vskip 1ex\nobreak
  \addcontentsline{toc}{section}{%
    \protect\numberline{\thesection}%
    #1}%
  }
\def\unnumberedsectionwithtwoarguments[#1]#2{%
  \ifhmode\par\fi
  \removelastskip
  \vskip 2ex\goodbreak
  \noindent
  \begingroup
  \leavevmode\centering\scshape\bfseries
  \leavevmode\centering\scshape\bfseries
  #2
  \par
  \endgroup
  \vskip 1ex\nobreak
  \addcontentsline{toc}{section}{%
    #1}%
}
\def\@seccntformat#1{\csname mythe#1\endcsname}
\let\latex@subsection\subsection
\def\subsection{\@ifstar{\refstepcounter{subsection}\latex@subsection*}{\latex@subsection}}
\def\@makechapterhead#1{%
  \vspace*{40\p@}%
  {\parindent \z@ \raggedright \normalfont
    \interlinepenalty\@M
    \Huge \bfseries #1\par \nobreak
    \vskip 40\p@
  }}
\let\latex@l@chapter\l@chapter
\def\l@chapter#1#2{\begingroup\let\numberline\@gobble\latex@l@chapter{#1}{#2}\endgroup}
\theoremstyle{plain}
\newtheorem{Th}{Theorem}[section]
\newtheorem{Prop}[Th]{Proposition}
\newtheorem{Lem}[Th]{Lemma}
\newtheorem{Cor}[Th]{Corollary}
\theoremstyle{definition}
\newtheorem{Rem}[Th]{Remark}
\newtheorem{Ex}[Th]{Example}
\newtheorem{Def}[Th]{Definition}
\def\bf{\textbf}
\def\it{\textit}
\def\te{\textnormal}
\def\tn{\textnormal}
\def\es{\,{\scriptstyle\te{\ding{60}}}\,}
\def\leq{\leqslant}
\def\geq{\geqslant}
\def\R{{\mathds R}}
\def\N{{\mathds N}}
\def\B{\textit{I\!B}}
\def\D{{\mathrm{dom}}\,}
\def\E{{\mathrm{epi}}\,}
\def\G{{\mathrm{gph}}\,}
\def\I{{\mathrm{int}}\,}
\def\e{\mathsf{e}}
\def\kr{\bar}
\begin{document}
\vspace*{0cm}
\title{Representation of Hamilton-Jacobi equation in optimal control theory with compact control set}
\author{\vspace*{-0.2cm}{Arkadiusz Misztela \textdagger}\vspace*{-0.2cm}}
\thanks{\textdagger\, Institute of Mathematics, University of Szczecin, Wielkopolska 15, 70-451 Szczecin, Poland; e-mail: arkadiusz.misztela@usz.edu.pl, arke@mat.umk.pl}

\begin{abstract} 
In this paper we study the existence of sufficiently regular representations of  Hamilton-Jacobi  equations in optimal control theory with the compact control set.  We introduce a new method to construct  representations for a wide class of Hamiltonians, wider than it was achieved before.  Our result is proved by means of these conditions on Hamiltonian that are necessary for the existence of a representation.  In particular, we solve an open problem  of Rampazzo (2005). We apply the obtained results to reduce a variational problem to an optimal control problem. \\
\vspace{0mm}

\hspace{-1cm}
\noindent  \bf{\scshape Keywords.} Hamilton-Jacobi equations, representations of Hamiltonians, optimal control \\\hspace*{-0.55cm}  theory,  parametrizations of set-valued maps, convex analysis.

\vspace{3mm}\hspace{-1cm}
\noindent \bf{\scshape Mathematics Subject Classification.} 26E25, 49L25, 34A60, 46N10.
\end{abstract}

\maketitle

\pagestyle{myheadings}  \markboth{\small{\scshape Arkadiusz Misztela}
}{\small{\scshape Representations of Hamiltonians}}

\thispagestyle{empty}

\vspace{-1cm}



\section{Introduction}

\noindent The Hamilton-Jacobi equation 
\begin{equation}\label{rowhj}
\begin{array}{rll}
-V_{t}+ H(t,x,-V_{x})=0 &\!\!\textnormal{in}\!\! & (0,T)\times\R^n, \\[0mm]
V(T,x)=g(x) & \!\!\textnormal{in}\!\! & \;\R^n,
\end{array}
\end{equation}
with a convex Hamiltonian $H$  in the gradient variable can be studied with connection to optimal control problem. It is possible, provided there exists  sufficiently regular triple $(A,f,l)$ satisfying the following equality
\begin{equation}\label{hfl}
H(t,x,p)=  \sup_{a\in A}\,\{\,\langle\, p\,,f(t,x,a)\,\rangle\,-\,l(t,x,a)\,\}.
\end{equation}
Then the value function of the Bolza optimal control problem defined by the formula
\begin{equation*}\label{fwfl}
V(t_0,x_0)= \inf_{(x,a)(\cdot)\,\in\, \emph{S}_f(t_0,x_0)}\,\big\{\,g(x(T))+\int_{t_0}^Tl(t,x(t),a(t))\,dt\,\big\}
\end{equation*}
is the unique viscosity solution of \eqref{rowhj}; see, e.g.  \cite{B-CD,C-S-2004,HF,F-P-Rz}, where $\emph{S}_f(t_0,x_0)$ denotes the set of all trajectory-control pairs of the control system
\begin{equation}\label{scs}
\begin{array}{ll}
\dot{x}(t)=f(t,x(t),a(t)),& a(t)\in A\;\;\mathrm{a.e.}
\;\;t\in[t_0,T],\\[0mm]
x(t_0)=x_0.&
\end{array}
\end{equation}
While working with control systems it is usually required from $f$ to be such a function that to every measurable control $a(\cdot)$ on $[t_0,T]$ with values in a compact subset $A$ of $\R^m$ there corresponds a unique solution $x(\cdot)$ of \eqref{scs} defined on $[t_0,T]$. It is guaranteed, for instance, by the local Lipschitz continuity and the sublinear growth of $f$ with respect to~$x$. The local Lipschitz continuity of $l$ with respect to $x$ is also necessary to prove regularities of value functions.

The triple $(A,f,l)$ that satisfies the equation \eqref{hfl} and the conditions stated above is  called a  \it{faithful representation} of  $H$. The use of the name ``faithful representation'' is justified by the fact that there are infinitely many triples $(A,f,l)$, that satisfy the equation \eqref{hfl},  among with there are the ones with totally irregular functions  $f,l$. The triple $(A,f,l)$, not necessarily regular, which satisfies the equality \eqref{hfl}  is called a  \it{representation} of $H$.

The main goal of our paper is to introduce a new method of construction of faithful representations for a wide class of Hamiltonians. This class is wider than the one in the papers \cite{F-S,HI,FR}.  Our result is proved by using only these conditions on  Hamiltonian that are necessary for the existence of a faithful representation. It means that the obtained result is optimal. In particular, we solve an open problem  of Rampazzo \cite[Rem. 2.3]{FR}.

Let the Lagrangian $L$ be the Legendre-Fenchel transform of $H$ in its gradient variable:
\begin{equation}\label{tran1}
 L(t,x,v)= \sup_{p\in\R^{n}}\,\{\,\langle v,p\rangle-H(t,x,p)\,\}.
\end{equation}
Here $\langle v,p\rangle$ denotes the inner product of $v$ and $p$. It is possible for $L$ to attain the value $+\infty$. The sets: $\D\varphi=\{\,x\in\R^n\mid\varphi(x)\not=\pm\infty\,\}$, 
$\G\varphi=\{\,(x,r)\in\R^n\times\R\mid\varphi(x)=r\,\}$ and $\E\varphi=\{\,(x,r)\in\R^n\times\R\mid\varphi(x)\leq r\,\}$ are called the \emph{effective domain}, the \it{graph} and the \it{epigraph} of $\varphi$, respectively. 

In 1985 Ishii \cite{HI} proposed a representation $(A,f,l)$ involving continuous functions $f,l$ with the infinite-dimensional control set $A$ and expressed the solution of a stationary Hamilton-Jacobi equation as the value function of an associated infinite horizon optimal control problem. The lack of local Lipschitz continuity of functions $f,l$ with respect to the variable $x$ in Ishii \cite{HI} paper causes   a lot of trouble in applications. Moreover, in general, not to every control $u(\cdot)$ there corresponds exactly one trajectory $x(\cdot)$. This means that one can not control  the system completely by selecting one of controls.

In 2005 Rampazzo \cite{FR} constructed a faithful representation by using set-valued  and convex analysis. His representation $(A,f,l)$ of $H$ is a graphical representation, i.e. a triple $(A,f,l)$ which satisfies  $\e(t,x,A)=\G L(t,x,\cdot)$, where $\e=(\,f,l\,)$. Examples \ref{prz-rep-111} and~\ref{prz-rep-112} show that a graphical representation is a faithful representation, if the Lipschitz-type condition on  $L(t,\cdot,\cdot)$ is assumed. It is a strong assumption, because  $L(t,\cdot,\cdot)$ is usually a lower semicontinuous function (see Exs. \ref{ex-1}, \ref{ex-2}, \ref{ex-4}). Such strong condition in \cite{FR} is the condition (H5). This problem was also noticed by  Rampazzo (see \cite[Rem. 2.3]{FR}).

In 2014 Frankowska-Sedrakyan \cite{F-S} investigated faithful representations 
of Hamiltonians that are measurable with respect to the time variable. In this case 
 Lipschitz constants of  Hamiltonians should depend on time. Frankowska-Sedrakyan \cite{F-S} noticed that if Lipschitz constants of  Hamiltonians are measurable functions, then the 
results of  Rampazzo \cite{FR} do not allow to claim whether Lipschitz constants of 
faithful representations of these  Hamiltonians are also measurable. It is well-known that 
in applications one requires not only measurability of  Lipschitz constants of faithful representations but also integrability.  This problem was solved by 
 Frankowska-Sedrakyan \cite{F-S} by indicating the precise Lipschitz constants of faithful 
representations depending on  Lipschitz constants of Hamiltonians. Besides, they studied 
stability of faithful representations. This result allowed 
Sedrakyan \cite{HS} to prove  appropriate convergence of value functions. However,
Frankowska-Sedrakyan \cite{F-S} used a graphical representation similarly to Rampazzo \cite{FR}. Therefore, they also need such strong condition (see \cite[(H5)]{F-S}).

 In this paper we solve the above problem concerning a graphical representation from  \cite{F-S, FR}. To this end, we introduced a new method of construction of a  faithful representation. Our representation $(A,f,l)$ of $H$ is an epigraphical representation, i.e. a triple $(A,f,l)$ which satisfies the condition $\G L(t,x,\cdot)\subset\e(t,x,A)\subset\E L(t,x,\cdot)$, where $\e=(\,f,l\,)$. An epigraphical representation is constructed by parametrizing  $\E L(t,x,\cdot)$ instead of  $\D L(t,x,\cdot)$ as in the case of  graphical representation. It implies that the dimension of the control set in our construction increases by one comparing to the graphical construction. The set  $\E L(t,x,\cdot)$ is not bounded as opposed to the set $\D L(t,x,\cdot)$. This fact causes new difficulties, but  we are able to deal with them. Thus, we obtain results that do not need such strong assumptions as in papers \cite{F-S,FR}. Besides, we indicate precise  Lipschitz constants of faithful representations similarly to Frankowska-Sedrakyan \cite{F-S}. In particular,\linebreak our results imply the stability of representations. In Subsection \ref{ncfr} we show that not every Hamiltonian has a faithful representation with the 
compact control set. This property is satisfied if  Lagrangian is bounded 
on the effective domain (see Thm. \ref{podr22_th_wk}). Moreover, our construction of a 
faithful representation can be applied to Hamiltonians with unbounded  Lagrangians on effective domains. However, in that case we obtain faithful representations with the unbounded 
control set. Such results will be contained in  \cite{AM}.

We apply our results  to reduce a variational problem to on optimal control problem (see Subsect.~\ref{rvptoocp}). More precisely, let us consider a variational problem associated with the given Lagrangian $L$. Let us define Hamiltonian $H$ as  the Legendre-Fenchel transform of $L$ in its velocity variable. Applying our result to Hamiltonian $H$ we obtain its faithful\linebreak representation $(A,f,l)$. Then the variational  problem associated with Lagrangian $L$ is~equi\-valent to the optimal control problem associated with the triple $(A,f,l)$ (see Thm. \ref{thm-reduct}).
Ealier, Olech \cite{CO-69} and Rockafeller \cite{RTR73,RTR} investigated the opposite 
problem that is a reduction of an  optimal control problem to a variational problem. 
More precisely, they considered the optimal control problem associated with the given triple 
$(A,f,l)$. Using this triple they defined Lagrangian $L$ in such a way that 
the optimal control problem associated with the triple  $(A,f,l)$ is equivalent 
to the variational problem associated with Lagrangian~$L$. The details concerning this reduction can also be found in the comprehensive monograph of Clarke \cite{FC}. Therefore, the above results 
prove that there exists strong correlation between variational problems and optimal control problems.

The outline of the paper is as follows. Section \ref{section-2} contains hypotheses and preliminary results. In Section~\ref{section-3} we gathered our main results. Sections \ref{wk-kon-istr}, \ref{pofrepth}, \ref{thms-stab}, \ref{thm-reduct-sect} contain proofs.


\section{Hypotheses and preliminary results}\label{section-2}

\noindent  We shall consider the following assumptions on the Hamiltonian:

\begin{enumerate}[leftmargin=9.7mm]
\item[\te{\bf{(H1)}}] $H:[0,T]\times\R^{n}\times\R^{n}\rightarrow\R$
is Lebesgue measurable  in $t$ for any $x,p\in\R^n$;
\item[\te{\bf{(H2)}}] $H(t,x,p)$ is continuous with respect to $(x,p)$ for every $t\in[0,T]$;
\item[\te{\bf{(H3)}}] $H(t,x,p)$ is convex with respect to $p$ for every $(t,x)\in[0,T]\times\R^n$;
\item[\te{\bf{(H4)}}] There exists a measurable map $c:[0,T]\to[0,+\infty)$ such that for every\\ 
\hspace*{-8.5mm}$t\in[0,T]$ and $x,p,q\in\R^n$ one has $|H(t,x,p)-H(t,x,q)|\leq c(t)(1+|x|)|p-q|$.
\end{enumerate}

 An extended-real-valued function is called \it{proper} if it never attains  the value $-\infty$ and it is not identically equal to $+\infty$. If  $H(t,x,\cdot)$ is proper, convex and lower semicontinuous for each $(t,x)$, then  $L(t,x,\cdot\,):=H^{\ast}(t,x,\cdot\,)$, where $^{\ast}$ denotes the Legendre-Fenchel transform, also has  these properties. Moreover, the following equality $H(t,x,\cdot\,)=L^{\ast}(t,x,\cdot\,)$ holds, cf. \cite[Thm. 11.1]{R-W}. By means of the properties of the Legendre-Fenchel transform from \cite{R-W} we can prove an equivalent version of (H1)$-$(H4) in the Lagrangian terms:

\begin{Prop}\label{prop2-fmw} Assume that $H$ satisfies \te{(H1)$-$(H3)}. If $L(t,x,\cdot\,)=H^{\ast}(t,x,\cdot\,)$, then
\begin{enumerate}
\item[\te{\bf{(L1)}}] $L:[0,T]\times\R^{n}\times\R^{n}\rightarrow\R\cup\{+\infty\}$ is Lebesgue-Borel-Borel measurable\te{;}
\item[\te{\bf{(L2)}}] $L(t,x,v)$  is  lower semicontinuous  with respect to $(x,v)$ for every $t\in[0,T]$\te{;}
\item[\te{\bf{(L3)}}] $L(t,x,v)$ is convex and proper with respect to $v$ for every $(t,x)\in[0,T]\times\R^n$\te{;}
\item[\te{\bf{(L4)}}] $\forall\,(t,x,v)\in[0,T]\times\R^n\times\R^n\;\;\forall\,x_i\rightarrow x\;\;
\exists\,v_i\rightarrow v\;:\;L(t,x_i,v_i)\rightarrow L(t,x,v)$\te{;}
\item[]\hspace{-1.3cm}Additionally, if $H$ satisfies \te{(H4)}, then
\item[\te{\bf{(L5)}}] $\forall\,(t,x,v)\in[0,T]\times\R^n\times\R^n \;:\; |v|>c(t)(1+|x|)\;\Rightarrow\; L(t,x,v) =+\infty$\te{;}
\item[]\hspace{-1.3cm}Additionally, if $H$ is continuous, then $L$ is lower semicontinuous and
\item[\te{\bf{(L6)}}] $\forall\,(t,x,v)\in[0,T]\times\R^n\times\R^n\;\;\forall\,(t_i,x_i)\rightarrow (t,x)\;\;\exists\,v_i\rightarrow v\;:\;L(t_i,x_i,v_i)\rightarrow L(t,x,v)$.
\end{enumerate}
\end{Prop}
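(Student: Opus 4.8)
The plan is to read off each Lagrangian property from the matching Hamiltonian hypothesis through the duality $L(t,x,\cdot)=H^\ast(t,x,\cdot)$, letting the convex-analytic results of \cite{R-W} carry the technical weight. Throughout, (H2)--(H3) guarantee that $H(t,x,\cdot)$ is finite, convex and continuous, hence proper, lower semicontinuous and convex, so that $H(t,x,\cdot)=L^\ast(t,x,\cdot)$ by Fenchel-Moreau.

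I would dispatch the routine items first. Property (L3) is immediate, since the conjugate of a proper, lsc, convex function is again proper, lsc and convex (cf. \cite[Thm. 11.1]{R-W}), applied to $H(t,x,\cdot)$ for fixed $(t,x)$. For (L2) I would write $L(t,x,v)=\sup_{p\in\R^n}\{\langle v,p\rangle-H(t,x,p)\}$ and observe that for fixed $t,p$ the map $(x,v)\mapsto\langle v,p\rangle-H(t,x,p)$ is continuous by (H2); a pointwise supremum of continuous functions is lsc. The same supremum, now of functions continuous in $(t,x,v)$ when $H$ is continuous, yields the joint lower semicontinuity of $L$ asserted before (L6). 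For (L1) the point is that the supremum may be restricted to a countable dense set, $L(t,x,v)=\sup_{p\in\Q^n}\{\langle v,p\rangle-H(t,x,p)\}$, by continuity in $p$; each integrand is measurable in $t$ by (H1) and continuous in $(x,v)$ by (H2), hence Carath\'eodory and therefore jointly $\mathcal L\otimes\mathcal B\otimes\mathcal B$-measurable, and a countable supremum preserves measurability. Finally (L5) follows from (H4): with $K:=c(t)(1+|x|)$ and $q=0$ one has $H(t,x,p)\le H(t,x,0)+K|p|$, so choosing $p=s\,v/|v|$ and letting $s\to+\infty$ gives $\langle v,p\rangle-H(t,x,p)\ge s(|v|-K)-H(t,x,0)\to+\infty$ whenever $|v|>K$, that is $L(t,x,v)=+\infty$.

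The heart of the matter, and the step I expect to be the main obstacle, is (L4) together with its analogue (L6), which I would obtain from the continuity of the Legendre-Fenchel transform under epi-convergence. Fix $t$ and a sequence $x_i\to x$. By (H2) each $H(t,x_i,\cdot)$ converges pointwise to $H(t,x,\cdot)$, and since these are finite convex functions, pointwise convergence upgrades to epi-convergence (cf. \cite[Thm. 7.17]{R-W}). By Wijsman's bicontinuity of conjugation for proper lsc convex functions (cf. \cite[Thm. 11.34]{R-W}), the conjugates $L(t,x_i,\cdot)=H^\ast(t,x_i,\cdot)$ then epi-converge to $H^\ast(t,x,\cdot)=L(t,x,\cdot)$. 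Unwinding epi-convergence at the point $v$: its upper half furnishes a recovery sequence $v_i\to v$ with $\limsup_i L(t,x_i,v_i)\le L(t,x,v)$, while its lower half applied to that same sequence gives $\liminf_i L(t,x_i,v_i)\ge L(t,x,v)$; together these yield $L(t,x_i,v_i)\to L(t,x,v)$, which is exactly (L4). The argument for (L6) is verbatim the same with $(t_i,x_i)\to(t,x)$, the joint continuity of $H$ supplying the pointwise convergence $H(t_i,x_i,p)\to H(t,x,p)$ that starts the chain. The delicate point is justifying the passage from pointwise to epi-convergence, where finiteness and convexity of the $H(t,x_i,\cdot)$ are essential, and quoting the conjugation result in the correct direction.
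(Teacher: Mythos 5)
Your proposal is correct, and it follows exactly the route the paper intends: the paper offers no detailed proof of this proposition, deferring to ``the properties of the Legendre--Fenchel transform from \cite{R-W}'', and your argument supplies precisely those details --- conjugacy preserving properness/convexity/lower semicontinuity for (L2)--(L3), a countable-dense-supremum Carath\'eodory argument for (L1), the linear-growth bound from (H4) for (L5), and pointwise-to-epi-convergence of finite convex functions combined with Wijsman's bicontinuity of conjugation (\cite[Thm.\ 11.34]{R-W}) for (L4) and (L6). Notably, the Wijsman/epi-convergence mechanism you use for (L4) and (L6) is the same one the paper itself invokes later (Lemma \ref{klconv}) for stability, so your proof is fully consistent with the paper's machinery.
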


Actually, we can prove that (H1)$-$(H4) are equivalent to  (L1)$-$(L5). 

Let us define the set-valued map $E_L:[0,T]\times\R^n\multimap\R^n\times\R$ by the following formula
\begin{equation*}
E_L(t,x):= \{\,(v,\eta)\in\R^n\times\R\,\mid\, L(t,x,v)\leq \eta\,\}.
\end{equation*}

 We say that a  set-valued map $F:[0,T]\multimap\R^m$ is \it{measurable}, if for every
open set $U\subset\R^m$ the inverse image  $F^{-1}(U):= \{\,t\in[0,T]\mid F(t)\cap U\not=\emptyset\,\}$
is a Lebesgue measurable set. The conditions (L1)$-$(L3) imply that a set-valued map $t\to E_L(t,x)$ is measurable for every $x\in\R^n$ and the set $E_L(t,x)$ is nonempty, closed and convex for all $(t,x)\in[0,T]\times\R^n$. 
 
The set $\G F:= \{\,(z,y)\mid y\in F(z)\,\}$ is called a \it{graph} of the set-valued map $F$.  From (L2) it follows that a set-valued map $x\to E_L(t,x)$ has a closed graph in $\R^n\times\R$ for all $t\in[0,T]$.

We say that a set-valued map $F:\R^n\multimap\R^m$ is \it{lower semicontinuous} in  Kuratowski's sense, if for every open set $U\subset\R^m$ the set $F^{-1}(U)$ is open. It is equivalent to the following condition: $\forall\,(z,y)\in\G F\;\;\forall\;z_i\rightarrow z\;\;\exists\;y_i\rightarrow y\;:\; y_i\in F(z_i)$ for all large $i\in\N$. 
The condition (L4) means that a set-valued map $x\to E_L(t,x)$ is lower semicontinuous in  Kuratowski's sense for every $t\in[0,T]$. 

For a nonempty subset $K$ of $\R^n$ we define $\|K\|:=\sup_{x\in K}|x|$. The condition (L5) implies that  $\|\D L(t,x,\cdot)\|\leq c(t)(1+|x|)$ for every $(t,x)\in[0,T]\times\R^n$. 

If  $L$ is lower semicontinuous with respect to all variables and satisfies (L6) then the set-valued map $E_L$ has a closed graph and is lower semicontinuous. 

If we combine the above facts we obtain the following corollary:

\begin{Cor}\label{wrow-wm}
Assume that $H$ satisfies \te{(H1)$-$(H3)}. If $L(t,x,\cdot\,)=H^{\ast}(t,x,\cdot\,)$, then
\begin{enumerate}
\item[\te{\bf{(E1)}}] $E_L(t,x)$ is a nonempty, closed, convex subset of $\;\R^{n+1}$ for all $(t,x)\in[0,T]\times\R^n$\te{;}
\item[\te{\bf{(E2)}}] $x\to E_L(t,x)$ has a closed graph for every $t\in[0,T]$\te{;}
\item[\te{\bf{(E3)}}] $x\to E_L(t,x)$ is lower semicontinuous for every  $t\in[0,T]$\te{;}
\item[\te{\bf{(E4)}}] $t\to E_L(t,x)$ is measurable for every $x\in\R^n$\te{;}
\item[]\hspace{-1.3cm}Additionally, if $H$ satisfies \te{(H4)}, then
\item[\te{\bf{(E5)}}] $\|\D L(t,x,\cdot)\|\leq c(t)(1+|x|)$ for every $(t,x)\in[0,T]\times\R^n$\te{;}
\item[]\hspace{-1.3cm}Additionally, if  $H$ is continuous, then
\item[\te{\bf{(E6)}}] $(t,x)\to E_L(t,x)$ has a closed graph and is lower semicontinuous.
\end{enumerate}
\end{Cor}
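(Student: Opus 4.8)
The plan is to read Corollary \ref{wrow-wm} as a dictionary that translates the Lagrangian properties (L1)--(L6) of Proposition \ref{prop2-fmw} into the corresponding statements (E1)--(E6) about the epigraphical set-valued map $E_L$. The single structural observation driving every step is that, by definition, $E_L(t,x)=\E L(t,x,\cdot\,)$, the epigraph taken in the velocity--value variable $(v,\eta)$; consequently the graph of the partial map $x\mapsto E_L(t,x)$ equals $\E L(t,\cdot,\cdot\,)$ in $\R^n\times\R^n\times\R$, and the full graph $\G E_L$ equals the joint epigraph $\E L$. Since passing to epigraphs turns lower semicontinuity into closedness and preserves convexity, and since the Kuratowski lower semicontinuity of an epigraphical map is exactly the graphical approximation property appearing in (L4) and (L6), each (Ei) should fall out of the matching (Li). First I would invoke Proposition \ref{prop2-fmw}: from (H1)--(H3) it yields (L1)--(L4); adding (H4) gives (L5); and assuming $H$ continuous makes $L$ jointly lower semicontinuous and supplies (L6).

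With these in hand I would establish the properties one at a time. For (E1), nonemptiness comes from the properness in (L3) (some value $L(t,x,v)$ is finite, so $(v,L(t,x,v))\in E_L(t,x)$), closedness from the fact that $E_L(t,x)$ is the epigraph of the function $L(t,x,\cdot\,)$, which is lower semicontinuous by (L2), and convexity from the convexity in (L3). Property (E2) is immediate once one notes that the graph of $x\mapsto E_L(t,x)$ is $\E L(t,\cdot,\cdot\,)$, which is closed because $L(t,\cdot,\cdot\,)$ is lower semicontinuous by (L2). Property (E5) is a direct rewriting of (L5): if $|v|>c(t)(1+|x|)$ then $L(t,x,v)=+\infty$, so $v\notin\D L(t,x,\cdot\,)$, whence $\D L(t,x,\cdot\,)\subset\{\,v:|v|\leq c(t)(1+|x|)\,\}$ and the norm bound follows.

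The two lower-semicontinuity statements (E3) and (E6) require one small but genuine construction rather than a purely formal translation. To prove (E3) from (L4), I would fix $(v,\eta)\in E_L(t,x)$ and a sequence $x_i\to x$, take $v_i\to v$ with $L(t,x_i,v_i)\to L(t,x,v)$ from (L4), and then lift to the value coordinate by setting $\eta_i:=\eta-L(t,x,v)+L(t,x_i,v_i)$; this gives $\eta_i\geq L(t,x_i,v_i)$, hence $(v_i,\eta_i)\in E_L(t,x_i)$, together with $(v_i,\eta_i)\to(v,\eta)$, which is precisely Kuratowski lower semicontinuity. Statement (E6) is obtained in the same way, running the argument with $(t_i,x_i)\to(t,x)$ and using (L6), while the closed-graph half of (E6) follows because $\G E_L=\E L$ is closed exactly when $L$ is jointly lower semicontinuous, which holds when $H$ is continuous.

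The step I expect to be the real obstacle is (E4), the measurability of $t\mapsto E_L(t,x)$, since this is the one assertion that is not a formal epigraph translation. Here I would rely on the theory of normal integrands: conditions (L1)--(L3) say precisely that $L(\cdot,x,\cdot\,)$ is a measurable, lower semicontinuous, convex integrand, and for such integrands the measurability of the associated epigraphical multifunction is standard and can be quoted from \cite{R-W}. Assembling (E1)--(E4), and then (E5), (E6) under the respective extra hypotheses, completes the proof.
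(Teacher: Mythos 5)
Your proposal is correct and follows essentially the same route as the paper, which states this corollary as a direct combination of the facts established just before it: (L1)--(L3) give (E1) and (E4) via normal-integrand measurability, (L2) gives (E2), (L4) and (L6) give the Kuratowski lower semicontinuity in (E3) and (E6), and (L5) gives (E5). Your explicit lifting $\eta_i:=\eta-L(t,x,v)+L(t,x_i,v_i)$ (legitimate since $L(t,x,v)\leq\eta<+\infty$ guarantees finiteness and $\eta_i\geq L(t,x_i,v_i)$) simply fills in a detail the paper leaves implicit.
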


\subsection{Lipschitz set-valued map $\pmb{x\to E_L(t,x)}$}
In this subsection we present Hausdorff continuity of a set-valued map in  Lagrangian and Hamiltonian terms. Let $\B(\kr{x},R)$ denote the closed ball in $\R^n$ of center $\kr{x}$ and radius $R\geq 0$. We
set $\B_R:=\B(0,R)$ and $\B:=\B(0,1)$. 
\begin{Th}\label{tw2_rlhmh}
Assume \te{(H1)$-$(H3)}. Let  $L(t,x,\cdot\,)=H^{\ast}(t,x,\cdot\,)$ and  $H(t,x,\cdot\,)=L^{\ast}(t,x,\cdot\,)$. Then there are the equivalences $\te{(HLC)}\Leftrightarrow\te{(LLC)}\Leftrightarrow\te{(ELC)}$\tn{:}

\tn{\bf{(HLC)}} For any $R>0$ there exists a measurable map $k_R:[0,T]\to[0,+\infty)$ such that $|\,H(t,x,p)-H(t,y,p)\,|\,\leq\, k_R(t)\,(1+|p|)\,|x-y|$ for all $t\in[0,T]$, $x,y\in\B_R$, $p\in\R^n$.

\tn{\bf{(LLC)}} For any $R>0$ there exists a measurable map $k_R:[0,T]\to[0,+\infty)$  such that for all $t\in[0,T]$, $x,y\in \B_R$, $v\in\D L(t,x,\cdot)$ there exists $u\in\D L(t,y,\cdot)$ satisfying inequalities $|u-v|\leq k_R(t)|y-x|$ and $L(t,y,u)\leq L(t,x,v)+k_R(t)|y-x|$.

\tn{\bf{(ELC)}} For any $R>0$ there exists a measurable map $k_R:[0,T]\to[0,+\infty)$ such that 
$E_L(t,x)\,\subset\, E_L(t,y)+k_R(t)\,|x-y|\,(\B\times[-1,1])$ for all $t\in[0,T]$, $x,y\in\B_R$.

Equivalences hold for the same map $k_R(\cdot)$.
\end{Th}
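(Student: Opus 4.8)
\emph{Overall strategy.} The plan is to prove the two equivalences separately, treating $\te{(LLC)}\Leftrightarrow\te{(ELC)}$ as an elementary reformulation and $\te{(ELC)}\Leftrightarrow\te{(HLC)}$ through Legendre--Fenchel duality, keeping one and the same map $k_R(\cdot)$ throughout so that the final sentence of the statement is automatic.

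\emph{The elementary equivalence.} First I would dispatch $\te{(LLC)}\Leftrightarrow\te{(ELC)}$ by unwinding the definitions of the epigraph $E_L$ and of the Minkowski sum. Given $(v,\eta)\in E_L(t,x)$, so that $v\in\D L(t,x,\cdot)$, condition $\te{(LLC)}$ produces $u\in\D L(t,y,\cdot)$ with $|u-v|\leq k_R(t)|x-y|$ and $L(t,y,u)\leq L(t,x,v)+k_R(t)|x-y|\leq\eta+k_R(t)|x-y|$; hence $(u,\eta+k_R(t)|x-y|)\in E_L(t,y)$ and $(v,\eta)$ differs from it by a vector in $k_R(t)|x-y|(\B\times[-1,1])$, which is $\te{(ELC)}$. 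Conversely, feeding $(v,L(t,x,v))\in E_L(t,x)$ into $\te{(ELC)}$ and reading off the two coordinates of the decomposition returns $\te{(LLC)}$ (the vertical slack $s\geq-1$ is what yields the $+k_R(t)|x-y|$ in the value inequality). Only $\te{(E1)}$ is used and the constant is unchanged.

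\emph{The dual equivalence.} The substance is $\te{(ELC)}\Leftrightarrow\te{(HLC)}$, which I would route through support functions $\sigma_C(p,q):=\sup_{(v,\eta)\in C}(\langle p,v\rangle+q\eta)$. The core computation is that the lower half-space directions see the Hamiltonian: for $\la>0$,
\[\sigma_{E_L(t,x)}(p,-\la)=\sup_v\{\langle p,v\rangle-\la L(t,x,v)\}=\la\,H(t,x,p/\la),\]
since for $\la>0$ the supremum over $\eta\geq L(t,x,v)$ is attained at $\eta=L(t,x,v)$ and $H=L^{\ast}$. Using $\sigma_{\B\times[-1,1]}(p,q)=|p|+|q|$ and the fact that $E_L(t,y)+k_R(t)|x-y|(\B\times[-1,1])$ is closed and convex (sum of the closed convex set $E_L(t,y)$ from $\te{(E1)}$ with a compact box), the support-function criterion for inclusion of closed convex sets turns $\te{(ELC)}$ into the family $\sigma_{E_L(t,x)}(p,q)\leq\sigma_{E_L(t,y)}(p,q)+k_R(t)|x-y|(|p|+|q|)$. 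Restricting to $q=-\la<0$, dividing by $\la$ and putting $\tilde p=p/\la$ gives exactly $H(t,x,\tilde p)\leq H(t,y,\tilde p)+k_R(t)|x-y|(1+|\tilde p|)$; exchanging $x$ and $y$ (legitimate, since $\te{(ELC)}$ is assumed for all $x,y\in\B_R$) supplies the absolute value, i.e.\ $\te{(HLC)}$. Reading the computation backwards yields $\te{(HLC)}\Rightarrow\te{(ELC)}$, once more with the same $k_R(\cdot)$.

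\emph{The main obstacle.} The delicate point is that the support-function criterion demands the inequality in \emph{every} direction $(p,q)$, while the Hamiltonian only controls $q<0$. For $q>0$ both support functions equal $+\infty$ (the epigraphs are unbounded upward), so the inequality is automatic. The genuinely subtle direction is the horizontal one $q=0$, where $\sigma_{E_L(t,x)}(p,0)=\sup_{v\in\D L(t,x,\cdot)}\langle p,v\rangle$ is the support function of the effective domain. Here I would use that $H(t,x,\cdot)$ is finite on all of $\R^n$ (so $0\in\D H(t,x,\cdot)$ and $L(t,x,\cdot)$ is $1$-coercive), whence the recession identity
\[\lim_{\la\downarrow0}\la\,H(t,x,p/\la)=\sup_{v\in\D L(t,x,\cdot)}\langle p,v\rangle\]
holds (a standard convex-analysis fact; cf.\ \cite{R-W}, the recession function of $H$ being the support function of $\D L(t,x,\cdot)$). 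This gives continuity of $\la\mapsto\sigma_{E_L(t,x)}(p,-\la)$ as $\la\downarrow0$ on both sides, so the inequalities proved for $\la>0$ pass to the limit and cover $q=0$ as well. I expect this limiting argument, together with the observation that the finiteness of $H$ is precisely what makes it work, to be the crux; the remaining directions are routine.
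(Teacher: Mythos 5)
Your proposal is correct, and while the same constant $k_R(\cdot)$ is preserved throughout (as the statement demands), it routes the hard half of the theorem differently from the paper. Your elementary step, \te{(LLC)}$\Leftrightarrow$\te{(ELC)}, coincides with the paper's Proposition \ref{0lnrwwl1}. The paper then closes the loop by proving \te{(HLC)}$\Leftrightarrow$\te{(LLC)} directly (Proposition \ref{0rwwl1}), working at the level of functions: it conjugates the perturbed Hamiltonian $\phi+\psi$, with $\phi(p)=H(t,y,p)$ and $\psi(p)=k_R(t)(1+|p|)|x-y|$, via the epi-sum formula \eqref{0epi-suma-row}, computes $\psi^{\ast}$ explicitly, and uses lower semicontinuity of $L(t,y,\cdot)$ on the compact ball $\{u\,:\,|u-v|\leq k_R(t)|x-y|\}$ to attain the infimum, which directly produces the point $u$ demanded by \te{(LLC)}; the converse is a direct estimate with near-maximizers in $H=L^{\ast}$. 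You instead prove \te{(HLC)}$\Leftrightarrow$\te{(ELC)} at the level of sets, characterizing the Minkowski inclusion by support functions, identifying $\sigma_{E_L(t,x)}(p,-\la)=\la\,H(t,x,p/\la)$ for $\la>0$, dismissing $q>0$ (both sides $+\infty$), and — the genuinely delicate point, which you correctly isolate — covering $q=0$ by the recession identity: the limit $\lim_{\la\downarrow 0}\la H(t,x,p/\la)$ exists in $(-\infty,+\infty]$ (since $H(\mu p)/\mu$ differs from the monotone quotient $(H(\mu p)-H(t,x,0))/\mu$ by a vanishing term) and equals the support function of $\D L(t,x,\cdot)$, i.e.\ $\sigma_{E_L(t,x)}(p,0)$. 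This is sound, granting the standard facts you invoke: the support-function criterion for inclusions of nonempty closed convex sets (nonemptiness, closedness and convexity of $E_L$ come from \te{(E1)}), closedness of $E_L(t,y)+k_R(t)|x-y|(\B\times[-1,1])$ as a closed-plus-compact sum, $\sigma_{\B\times[-1,1]}(p,q)=|p|+|q|$, and the horizon-function identity from \cite{R-W}. Comparing the two: the paper's inf-convolution argument stays at finite directions, needs no limiting procedure, and yields the attained approximant $u$ of \te{(LLC)} directly; yours makes the geometry transparent — the modulus $(1+|p|)$ in \te{(HLC)} is exactly $\sigma_{\B\times[-1,1]}(p,-1)$, so \te{(HLC)} is literally the dual form of \te{(ELC)} — and it would generalize immediately to gauge sets other than the box. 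Two side remarks of yours are slightly off but harmless: $1$-coercivity of $L$ plays no role (the recession identity holds for any proper lsc convex conjugate pair), and at $q=0$ you do not need two-sided continuity of $\la\mapsto\sigma_{E_L(t,x)}(p,-\la)$, only the existence and identification of the limit, which the recession identity already supplies.
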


 Theorem \ref{tw2_rlhmh} follows from Propositions \ref{0rwwl1} and \ref{0lnrwwl1} that are proven below. 

Let $K$ be a nonempty subset of $\R^m$. The distance from $x\in\R^m$ to $K$ is defined by $d(x,K):=\inf_{y\in K}|x-y|$. For nonempty subsets $K$ and $D$ of $\R^m$, the extended Hausdorff distance between $K$ and $D$ is defined by
\begin{equation}
\mathscr{H}(K,D):= \max\big\{\,\sup_{x\in K}d(x,D),\;\sup_{x\in D}d(x,K)\,\big\}\in\R\cup\{+\infty\}.
\end{equation}

By Theorem \ref{tw2_rlhmh} (ELC) we obtain the following corollary:

\begin{Cor}\label{hlc-cor-ner}
Assume that  $H$ satisfies  \te{(H1)$-$(H3)} and \tn{(HLC)}. If $L(t,x,\cdot\,)=H^{\ast}(t,x,\cdot\,)$, then the following inequality
\begin{equation}\label{n2_hd}
\mathscr{H}(E_L(t,x),E_L(t,y))\leq 2k_R(t)\,|x-y|
\end{equation}
holds for any $t\in[0,T]$, $x,y\in \B_R$ and $R>0$.
\end{Cor}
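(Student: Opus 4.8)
The plan is to read the corollary off directly from the inclusion in condition (ELC) of Theorem \ref{tw2_rlhmh}. Since $H$ satisfies (H1)$-$(H3) together with (HLC), that theorem gives the equivalence (HLC)$\Leftrightarrow$(ELC) with the \emph{same} measurable map $k_R(\cdot)$. Hence for all $t\in[0,T]$ and $x,y\in\B_R$ we have
\[
E_L(t,x)\,\subset\, E_L(t,y)+k_R(t)\,|x-y|\,(\B\times[-1,1]).
\]
The entire argument then consists of translating this set inclusion into the two-sided Hausdorff estimate \eqref{n2_hd}.

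First I would bound the one-sided excess $\sup_{(v,\eta)\in E_L(t,x)}d((v,\eta),E_L(t,y))$. Fix $(v,\eta)\in E_L(t,x)$. By the inclusion above there exist $(u,\xi)\in E_L(t,y)$ and $(b,s)\in\B\times[-1,1]$, so that $|b|\leq 1$ and $|s|\leq 1$, with $(v,\eta)=(u,\xi)+k_R(t)|x-y|\,(b,s)$. The Euclidean norm of the displacement in $\R^{n+1}$ is then estimated by
\[
\big|(v,\eta)-(u,\xi)\big|=k_R(t)\,|x-y|\sqrt{|b|^2+s^2}\,\leq\, k_R(t)\,|x-y|\,(|b|+|s|)\,\leq\, 2k_R(t)\,|x-y|,
\]
using $\sqrt{a^2+b^2}\leq a+b$ for $a,b\geq 0$. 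Since $(u,\xi)\in E_L(t,y)$, this yields $d((v,\eta),E_L(t,y))\leq 2k_R(t)|x-y|$, and taking the supremum over $(v,\eta)\in E_L(t,x)$ controls the first term in the definition of $\mathscr{H}$.

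The second step is symmetry: because $x$ and $y$ enter (ELC) on equal footing and both lie in $\B_R$, exchanging their roles gives $E_L(t,y)\subset E_L(t,x)+k_R(t)|x-y|(\B\times[-1,1])$, whence $\sup_{(v,\eta)\in E_L(t,y)}d((v,\eta),E_L(t,x))\leq 2k_R(t)|x-y|$ by the identical computation. Taking the maximum of the two suprema in the definition of the extended Hausdorff distance then gives \eqref{n2_hd}. The computation presents no genuine obstacle; the only point demanding a little care is the constant, and I would note explicitly that the factor $2$ (rather than the sharper $\sqrt{2}$) arises solely from the crude bound $\sqrt{a^2+b^2}\leq a+b$ applied to the product set $\B\times[-1,1]$, so the stated inequality holds with room to spare.
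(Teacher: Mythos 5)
Your proposal is correct and follows essentially the same route as the paper, which simply reads \eqref{n2_hd} off the inclusion (ELC) of Theorem \ref{tw2_rlhmh} (applied with $x$ and $y$ interchanged, since both lie in $\B_R$) and the bound $\|\B\times[-1,1]\|\leq 2$. Your additional remark that the crude estimate $\sqrt{a^2+b^2}\leq a+b$ could be replaced by $\sqrt{2}$, giving the sharper constant $\sqrt{2}\,k_R(t)$, is accurate but inessential to the statement.
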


The epi-sum of functions $\phi,\,\psi:\R^n\rightarrow\R\cup\{+\infty\}$ is a
function $\phi\es\psi:\R^n\rightarrow\R\cup\{\pm\infty\}$
given by the formula
\begin{equation*}
(\phi\es\psi)(v):= \inf_{u\in\R^n}\{\phi(u)+\psi(v-u)\}.
\end{equation*}
Let functions $\phi,\,\psi:\R^n\rightarrow\R\cup\{+\infty\}$ be  proper,
convex and lower semicontinuous. We also assume that  $\D\psi=\R^n$.
Then the epi-sum $\phi^{\ast}\es\, \psi^{\ast}$ is a proper, convex and lower semicontinuous function. Moreover,  the following equality holds, cf. \cite[Thm. 11.23]{R-W},
\begin{equation}\label{0epi-suma-row}
(\phi+\psi)^{\ast}=\phi^{\ast}\es\, \psi^{\ast}.
\end{equation}

\begin{Prop}\label{0rwwl1}
Assume that  $p\rightarrow H(t,x,p)$ and $p\rightarrow H(t,y,p)$ are two  real-valued convex functions. Assume further that  $L(t,x,\cdot\,)=H^{\ast}(t,x,\cdot\,)$ and  $L(t,y,\cdot\,)=H^{\ast}(t,y,\cdot\,)$. Then the following conditions are equivalent:
\begin{enumerate}
\item[$\pmb{\te{(a)}}$] $H(t,x,p)\leq H(t,y,p)+k_R(t)\,(1+|p|)\,|x-y|$\, for all $p\in\R^n$.
\vspace{1mm}
\item[$\pmb{\te{(b)}}$] For all\, $v\in\D L(t,x,\cdot)$\, there exists\, $u\in\D L(t,y,\cdot)$\, such that  $|u-v|\leq k_R(t)\,|x-y|$\, and\,  $L(t,y,u)\leq L(t,x,v)+k_R(t)\,|x-y|$.
\end{enumerate}
\end{Prop}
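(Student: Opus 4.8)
The plan is to dualize both conditions by the Legendre-Fenchel transform and to recognize them as being conjugate to a single intermediate inequality. Fix $t\in[0,T]$ and write $r:=k_R(t)\,|x-y|\geq 0$. The guiding observation is that the perturbation in (a) splits as $k_R(t)(1+|p|)|x-y|=r+r|p|$, where $p\mapsto r|p|$ is the support function of the closed ball $r\,\B$; consequently its Legendre-Fenchel conjugate is the indicator $\chi_{r\B}$ of $r\,\B$, i.e. $\chi_{r\B}(v)=0$ for $|v|\leq r$ and $\chi_{r\B}(v)=+\infty$ otherwise. Since $p\mapsto H(t,x,p)$ and $p\mapsto H(t,y,p)$ are real-valued convex, they are proper, convex and lower semicontinuous, so $H(t,x,\cdot)^{\ast}=L(t,x,\cdot)$ (the standing assumption) and, by biconjugation, $L(t,x,\cdot)^{\ast}=H(t,x,\cdot)$, and likewise at $y$. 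The function $p\mapsto r|p|$ is proper, convex, lower semicontinuous with $\D(r|\cdot|)=\R^n$, hence the epi-sum rule \eqref{0epi-suma-row} applies to the pair $H(t,y,\cdot)$ and $r|\cdot|$. I would then show that both (a) and (b) are equivalent to the single pointwise inequality
\[\te{(I)}\qquad L(t,y,\cdot)\es\chi_{r\B}\ \leq\ L(t,x,\cdot)+r\qquad\text{on }\R^n.\]

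For the equivalence (a) $\Leftrightarrow$ (I) I would use Legendre-Fenchel duality in each direction. Rewriting (a) as $H(t,x,\cdot)\leq H(t,y,\cdot)+r|\cdot|+r$ and conjugating, which reverses the inequality, gives $L(t,x,\cdot)\geq (H(t,y,\cdot)+r|\cdot|)^{\ast}-r$; by \eqref{0epi-suma-row} the right-hand term equals $L(t,y,\cdot)\es\chi_{r\B}-r$, which is precisely (I). Conversely, conjugating (I) reverses it into $(L(t,y,\cdot)\es\chi_{r\B})^{\ast}\geq (L(t,x,\cdot)+r)^{\ast}$; here $(L(t,x,\cdot)+r)^{\ast}=H(t,x,\cdot)-r$ by biconjugation, while taking conjugates in \eqref{0epi-suma-row} and using that $H(t,y,\cdot)+r|\cdot|$ is its own biconjugate yields $(L(t,y,\cdot)\es\chi_{r\B})^{\ast}=H(t,y,\cdot)+r|\cdot|$. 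Thus $H(t,y,\cdot)+r|\cdot|\geq H(t,x,\cdot)-r$, which is (a).

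For the equivalence (I) $\Leftrightarrow$ (b) I would unwind the epi-sum with an indicator into an infimum over a ball: $(L(t,y,\cdot)\es\chi_{r\B})(v)=\inf_{|u-v|\leq r}L(t,y,u)$, so (I) reads $\inf_{|u-v|\leq r}L(t,y,u)\leq L(t,x,v)+r$ for every $v$. If (b) holds, the point $u$ it supplies makes this infimum bound clear, so (b) $\Rightarrow$ (I). Conversely, fix $v\in\D L(t,x,\cdot)$ in (I); then the right-hand side is finite, and since $L(t,y,\cdot)$ is lower semicontinuous and the closed ball $\{u:|u-v|\leq r\}$ is compact, the infimum is attained at some $u$ with $|u-v|\leq r$ and $L(t,y,u)\leq L(t,x,v)+r<+\infty$, whence $u\in\D L(t,y,\cdot)$; this is exactly (b).

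The routine verifications are that all functions entering \eqref{0epi-suma-row} and the biconjugation theorem are proper, convex and lower semicontinuous, which is immediate from the real-valuedness and convexity of $H(t,x,\cdot)$ and $H(t,y,\cdot)$, and that the degenerate case $r=0$ (that is $x=y$), where $r\,\B=\{0\}$ and all three statements collapse to $H(t,x,\cdot)\leq H(t,y,\cdot)$, is covered uniformly. I expect the only genuinely analytic point to be the attainment of the infimum in the step (I) $\Rightarrow$ (b): one must exhibit an actual $u\in\D L(t,y,\cdot)$ realizing the bound, and this is precisely where lower semicontinuity of $L(t,y,\cdot)$ combined with compactness of the closed ball $\{u:|u-v|\leq r\}$ is used.
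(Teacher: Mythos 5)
Your proof is correct, and it differs from the paper's in a meaningful way in one direction. The implication (a)$\Rightarrow$(b) is essentially the paper's argument: the paper conjugates the perturbed inequality $H(t,x,\cdot)\leq H(t,y,\cdot)+\psi$ with $\psi(p)=k_R(t)(1+|p|)|x-y|$, computes $\psi^{\ast}$ explicitly (which is exactly your $\chi_{r\B}-r$, so your splitting $r+r|\cdot|$ is only a cosmetic repackaging), applies the epi-sum formula \eqref{0epi-suma-row}, and then extracts $u$ by Weierstrass attainment of the proper lsc function $L(t,y,\cdot)$ on the compact ball --- precisely your chain (a)$\Rightarrow$(I)$\Rightarrow$(b). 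For the converse, however, the paper argues primally: it fixes $\kr{p}$ and $\varepsilon>0$, picks a near-maximizer $\kr{v}\in\D L(t,x,\cdot)$ in $H(t,x,\kr{p})=L^{\ast}(t,x,\kr{p})$, applies (b) to get $\kr{u}$, and chains the three inequalities, letting $\varepsilon\to 0$. You instead note that (b)$\Rightarrow$(I) is immediate (for $v\notin\D L(t,x,\cdot)$ the right side of (I) is $+\infty$ by properness --- worth saying explicitly, though it is a one-line remark) and then conjugate (I) back, using biconjugation of the finite convex, hence continuous, function $H(t,y,\cdot)+r|\cdot|$ and the identity $(L(t,x,\cdot)+r)^{\ast}=H(t,x,\cdot)-r$. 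Your route is more symmetric and avoids the $\varepsilon$-argument entirely, at the price of invoking the biconjugation theorem twice; the paper's direct estimate is more elementary, using only the pointwise equality $H=L^{\ast}$ that is part of the standing hypotheses. Both are complete; all the regularity needed for \eqref{0epi-suma-row} and for $H^{\ast\ast}=H$ is available because real-valued convex functions on $\R^n$ are continuous, as you observe.
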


\begin{proof}
We start with the proof of  implication  $\te{(a)}\Rightarrow\te{(b)}$. Let $\psi(p):=k_R(t)\,(1+|p|)\,|x-y|$ and $\phi(p):=H(t,y,p)$  for every $p\in\R^n$. It is not difficult to calculate that for every $v\in\R^n$ 
\begin{equation}\label{0prop-wa2}
\psi^{\ast}(v)=\left\{
\begin{array}{lll}
\:-k_R(t)\,|x-y| & \te{if} & |v|\leq k_R(t)\,|x-y| \\
\:+\infty & \te{if} & |v|> k_R(t)\,|x-y|.
\end{array}
\right.
\end{equation}
 We notice that the function  $\psi$ is proper, convex, lower semicontinuous and $\D\psi=\R^n$. Therefore, by the equality (\ref{0epi-suma-row}) it follows that for every $v\in\R^n$
\begin{equation}\label{0prop-wa1}
(\phi+\psi)^{\ast}(v)=(\phi^{\ast}\es \psi^{\ast})(v)=\inf_{u\,:\,|v-u|\,\leq\, k_R(t)\,|x-y|}\left\{\,L(t,y,u)-k_R(t)\,|x-y|\,\right\}.
\end{equation}
The inequality $\te{(a)}$ implies that $H(t,x,p)\leq \phi(p)+\psi(p)$ for every $p\in\R^n$. Therefore,
by the property of the Legendre-Fenchel transform we obtain  $(\phi+\psi)^{\ast}(v)\leq L(t,x,v)$ for all $v\in\R^n$. By the equality (\ref{0prop-wa1}) we get for all $v\in\R^n$
\begin{equation}\label{0prop-wa4}
L(t,x,v)\geq \inf_{u\,:\,|v-u|\,\leq\, k_R(t)\,|x-y|}\{\,L(t,y,u)-k_R(t)\,|x-y|)\,\}.
\end{equation}
The function  $u\rightarrow L(t,y,u)-k_R(t)\,|x-y|$ is proper and lower semicontinuous, so it achieves its minimum
on the compact set $\{\,u\,\mid\,|v-u|\leq k_R(t)\,|x-y|\,\}$. Using the inequality  (\ref{0prop-wa4}), we obtain the condition $\te{(b)}$. This completes the proof $\te{(a)}\Rightarrow\te{(b)}$.

Now, we prove the implication $\te{(b)}\Rightarrow\te{(a)}$. To this end, we fix $\kr{p}\in\R^n$ and $\varepsilon>0$. Because of $H(t,x,\cdot\,)=L^{\ast}(t,x,\cdot\,)$,  there exists $\kr{v}\in\D L(t,x,\cdot)$ such that
\begin{equation}\label{00prop-wa5}
    H(t,x,\kr{p})-\varepsilon\leq\langle \kr{p},\kr{v}\rangle-L(t,x,\kr{v}).
\end{equation}
By the condition $\te{(b)}$ there exists $\kr{u}\in\D L(t,y,\cdot)$ such that
\begin{equation}\label{00prop-wa6}
    |\kr{u}-\kr{v}|\leq k_R(t)\,|y-x| \;\;\te{and}\;\; L(t,y,\kr{u})\leq L(t,x,\kr{v})+k_R(t)\,|y-x|.
\end{equation}
By the inequalities (\ref{00prop-wa5}) and (\ref{00prop-wa6}) we obtain
\begin{eqnarray*}
    H(t,x,\kr{p})-\varepsilon  &\leq &  \langle \kr{p},\kr{v}\rangle-L(t,x,\kr{v})+ H(t,y,\kr{p})-\langle \kr{p},\kr{u}\rangle+L(t,y,\kr{u})\\
    &\leq & H(t,y,\kr{p})+ |\kr{p}|\,|\kr{v}-\kr{u}|+L(t,y,\kr{u})-L(t,x,\kr{v})\\
    &\leq & H(t,y,\kr{p})+k_R(t)\,(1+|\kr{p}|)\,|x-y|.
\end{eqnarray*}
As $\varepsilon>0$ is an arbitrary number, we get $H(t,x,\kr{p}) \leq H(t,y,\kr{p})+k_R(t)\,(1+|\kr{p}|)\,|x-y|$. Also, $\kr{p}\in\R^n$ is arbitrary, so we have the inequality $H(t,x,p) \leq H(t,y,p)+k_R(t)\,(1+|p|)\,|x-y|$ for every $p\in\R^n$. It complete the proof.
\end{proof}

\begin{Prop}\label{0lnrwwl1}
Assume that   $v\rightarrow L(t,x,v)$ and $v\rightarrow L(t,y,v)$ are two proper extended-real-valued functions.  Then the following conditions are equivalent:
\begin{enumerate}
\item[$\pmb{\te{(a)}}$] For all\, $v\in\D L(t,x,\cdot)$\, there exists\, $u\in\D L(t,y,\cdot)$\, such that $|u-v|\leq k_R(t)\,|y-x|$\, and\,  $L(t,y,u)\leq L(t,x,v)+k_R(t)\,|y-x|$.
\vspace{1mm}
\item[$\pmb{\te{(b)}}$] $E_L(t,x)\,\subset\, E_L(t,y)+k_R(t)\,|x-y|\,(\B\times[-1,1])$.
\end{enumerate}
\end{Prop}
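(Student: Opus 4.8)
The plan is to unwind both conditions into the language of the epigraph set $E_L(t,x)=\{(v,\eta)\mid L(t,x,v)\leq\eta\}$ and then read off the equivalence almost directly. Write $\rho:=k_R(t)\,|x-y|$ throughout. Since $\rho\,(\B\times[-1,1])=\{(w,s)\mid |w|\leq\rho,\ |s|\leq\rho\}$, condition (b) says precisely that every $(v,\eta)\in E_L(t,x)$ can be written as $(u,\mu)+(w,s)$ with $(u,\mu)\in E_L(t,y)$, $|w|\leq\rho$ and $|s|\leq\rho$; equivalently, for each such $(v,\eta)$ there is $(u,\mu)\in E_L(t,y)$ with $|v-u|\leq\rho$ and $|\eta-\mu|\leq\rho$.

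For the implication $\te{(a)}\Rightarrow\te{(b)}$ I would fix $(v,\eta)\in E_L(t,x)$, so that $L(t,x,v)\leq\eta<+\infty$ and in particular $v\in\D L(t,x,\cdot)$. Applying (a) produces $u\in\D L(t,y,\cdot)$ with $|u-v|\leq\rho$ and $L(t,y,u)\leq L(t,x,v)+\rho\leq\eta+\rho$. Putting $w:=v-u$ gives $|w|\leq\rho$, so the horizontal component is immediately under control. The delicate point is the vertical coordinate: a point of $E_L(t,x)$ may sit far above the graph, so $\eta$ need not be close to $L(t,y,u)$. I would therefore take the height of the target point to be $\mu:=\max\{L(t,y,u),\,\eta-\rho\}$. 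This choice forces $\mu\geq L(t,y,u)$, hence $(u,\mu)\in E_L(t,y)$, while the two alternatives $\mu=\eta-\rho$ and $\mu=L(t,y,u)\leq\eta+\rho$ both yield $\eta-\rho\leq\mu\leq\eta+\rho$, i.e. $|\eta-\mu|\leq\rho$. Then $(v,\eta)=(u,\mu)+(v-u,\eta-\mu)$ exhibits the inclusion required in (b).

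For the converse $\te{(b)}\Rightarrow\te{(a)}$ I would fix $v\in\D L(t,x,\cdot)$ and lift it to the boundary point $(v,L(t,x,v))\in E_L(t,x)$. By (b) there is $(u,\mu)\in E_L(t,y)$ with $|v-u|\leq\rho$ and $|L(t,x,v)-\mu|\leq\rho$. From $(u,\mu)\in E_L(t,y)$ we get $L(t,y,u)\leq\mu$, so $u\in\D L(t,y,\cdot)$ and $L(t,y,u)\leq\mu\leq L(t,x,v)+\rho$, which is exactly the conclusion of (a).

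The only genuine obstacle is the one flagged above: because $E_L(t,x)$ is unbounded in the vertical direction, the naive choice $\mu=L(t,y,u)$ need not stay within $\rho$ of $\eta$, and the naive choice $\mu=\eta-\rho$ need not lie in $E_L(t,y)$; taking the maximum of the two reconciles both requirements at once. Everything else is a direct translation between the functional inequalities of (a) and the set inclusion of (b), and no properness, convexity or semicontinuity of $L$ is needed beyond what guarantees that $\D L(t,\cdot\,,\cdot)\neq\emptyset$ so that the statements are non-vacuous.
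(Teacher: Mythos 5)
Your proof is correct and follows essentially the same route as the paper's: for (b)$\Rightarrow$(a) you lift $v$ to the graph point $(v,L(t,x,v))$ exactly as the paper does, and for (a)$\Rightarrow$(b) you decompose an arbitrary $(v,\eta)\in E_L(t,x)$ using the point $u$ supplied by (a). The only difference is cosmetic: where you set $\mu=\max\{L(t,y,u),\,\eta-k_R(t)|x-y|\}$ to handle the ``delicate point'' of the vertical coordinate, the paper simply takes $\mu:=\eta+k_R(t)|x-y|$ (i.e.\ the fixed vertical shift $s=-1$), which already satisfies $L(t,y,u)\leq L(t,x,v)+k_R(t)|x-y|\leq\mu$ and $|\eta-\mu|\leq k_R(t)|x-y|$, so the maximum you introduce is valid but unnecessary.
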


\begin{proof}
We start with the proof of  implication $\te{(a)}\Rightarrow\te{(b)}$. Without loss of generality
we assume that $x\not=y$. Let $(v,\eta)\in E_L(t,x)$. Then $L(t,x,v)\leq \eta$. So $v\in\D L(t,x,\cdot)$.
By the condition $\te{(a)}$, there exists $u\in\D L(t,y,\cdot)$ such that
\begin{equation*}
\te{(i)}\;\;|u-v|\leq k_R(t)\,|y-x| \;\;\te{and}\;\;\te{(ii)}\;\; L(t,y,u)\leq L(t,x,v)+k_R(t)\,|y-x|.
\end{equation*}
Let us define  $\mu:=\eta+k_R(t)\,|y-x|$, $s:=-1$ and
\begin{equation*}
b:=(v-u)/(k_R(t)\,|y-x|)\;\;\te{if}\;\;k_R(t)>0,\hspace{0.5cm}b:=0\;\;\te{if}\;\;k_R(t)=0.
\end{equation*}
The inequality (i) implies that $b\in\B$. Besides, from (ii) we obtain 
\begin{equation*}
L(t,y,u)\leq L(t,x,v)+k_R(t)\,|y-x|\leq\eta+k_R(t)\,|y-x|=\mu.
\end{equation*}
Therefore, $(b,s)\in\B\times[-1,1]$ and $(u,\mu)\in\E L(t,y,\cdot)=E_L(t,y)$. So, we get
\begin{eqnarray*}
(v,\eta) &=& (u,\mu)+k_R(t)\,|y-x|\,(b,s)\\
&\in & E_L(t,y)+k_R(t)\,|x-y|\,(\B\times[-1,1]).
\end{eqnarray*}
Thus, the condition  $\te{(b)}$ of the proposition is proven.

Now, we prove the implication $\te{(b)}\Rightarrow\te{(a)}$. Let $v\in\D L(t,x,\cdot)$. Then
$(v,L(t,x,v))\in E_L(t,x)$. Therefore, by the condition  $\te{(b)}$ we obtain
$$(v,L(t,x,v))\in  E_L(t,y)+k_R(t)\,|x-y|\,(\B\times[-1,1]).$$
So, there exists $(u,\mu)\in E_L(t,y)$ and $(b,s)\in\B\times[-1,1]$ such that
\begin{eqnarray}\label{00arfr-1}
(v,L(t,x,v))=(u,\mu)+k_R(t)\,|y-x|\,(b,s).
\end{eqnarray}
Because of $(u,\mu)\in E_L(t,y)$,  $L(t,y,u)\leq \mu$. Hence  $u\in\D L(t,y,\cdot)$. By the equality (\ref{00arfr-1}) we have  $|u-v|= k_R(t)\,|y-x|\,|b|\leq k_R(t)\,|y-x|$ and
\begin{eqnarray*}
L(t,y,u)\;\;\leq\;\;\mu &=& L(t,x,v)+k_R(t)\,|y-x|(-s)\\
&\leq & L(t,x,v)+k_R(t)\,|y-x|.
\end{eqnarray*}
 Thus, we have proven that for every $v\in\D L(t,x,\cdot)$ there exists  $u\in\E L(t,y,\cdot)$ such that $|u-v|\leq k_R(t)\,|y-x|$ and  $L(t,y,u)\leq L(t,x,v)+k_R(t)\,|y-x|$.
It completes the proof of the proposition.
\end{proof}

\subsection{Examples of Hamiltonians}\label{przy-podroz} 
In this subsection we present  examples of Hamiltonians which satisfy (H1)$-$(H4) and (HLC). These examples have nonregular Lagrangians, so they do not fulfill conditions of theorems contained in  \cite{F-S,FR}. 

\begin{Ex}\label{ex-1}
Let us define the Hamiltonian $H:\R\times\R\rightarrow\R$ by the formula
\begin{equation*}
H(x,p):=\max\{\,|p|\,|x|-1,0\,\}.
\end{equation*}
This Hamiltonian satisfies conditions  (H1)$-$(H4) and (HLC). The Lagrangian  $L:\R\times\R\rightarrow\R\cup\{+\infty\}$ given by the formula (\ref{tran1}) has the form
\begin{equation*}
L(x,v)=\left\{
\begin{array}{ccl}
+\infty & \te{if} & v\not\in[-|x|,|x|\,],\;x\not=0,\\[1mm]
\left|\frac{\displaystyle v}{\displaystyle x}\right| & \te{if} & v\in[-|x|,|x|\,],\;x\not=0, \\[1mm]
0 & \te{if} & v=0,\; x=0,\\[0mm]
+\infty & \te{if} & v\not=0,\;x=0.
\end{array}
\right.
\end{equation*}
Obviously,  $\D L(x,\cdot)=[-|x|,|x|\,]$ for all $x\in\R$. Moreover, the function  $(x,v)\rightarrow L(x,v)$ does not satisfy the assumption (H5) of~\cite{F-S,FR}. Indeed, it is not continuous on the set $\D L$, because $\lim_{i\rightarrow\infty}L\left(1/i,1/i\right)=1\not= 0=L(0,0)$.
\end{Ex}

\begin{Ex}[Rampazzo]\label{ex-2}
Let us define the Hamiltonian $H:\R\times\R\rightarrow\R$ by the formula
\begin{equation*}
H(x,p):=\sqrt{1+p^2}-|x|.
\end{equation*}
This Hamiltonian satisfies conditions (H1)$-$(H4) and (HLC). The Lagrangian  $L:\R\times\R\rightarrow\R\cup\{+\infty\}$ given by the formula (\ref{tran1})
has the following form
\begin{equation*}
L(x,v)=\left\{
\begin{array}{ccl}
|x|-\sqrt{1-v^2} & \te{if} & v\in[-1,1],\\[1mm]
+\infty & \te{if} & v\not\in[-1,1].
\end{array}
\right.
\end{equation*}
Obviously, $\D L(x,\cdot)=[-1,1]$ for all $x\in\R$. We notice that the function $(x,v)\rightarrow L(x,v)$ is continuous on the set $\D L$, but it does not fulfill the condition (H5) of \cite{F-S,FR}.
\end{Ex}

\begin{Ex}\label{ex-4}
Let us define the Hamiltonian $H:\R\times\R\rightarrow\R$ by the formula
\begin{equation*}
H(x,p):=\left\{
\begin{array}{ccl}
(\sqrt{|xp|}-1)^2 & \te{if} & |xp|> 1, \\[1mm]
0 & \te{if} & |xp|\leq 1.
\end{array}
\right.
\end{equation*}
This Hamiltonian satisfies conditions (H1)$-$(H4) and (HLC). The Lagrangian  $L:\R\times\R\rightarrow\R\cup\{+\infty\}$ given by the formula (\ref{tran1})
has the following form
\begin{equation*}
L(x,v)=\left\{
\begin{array}{ccl}
+\infty & \te{if} & v\not\in(-|x|,|x|\,),\;x\not=0,\\[1mm]
\frac{\displaystyle |v|}{\displaystyle |x|-|v|} & \te{if} & v\in(-|x|,|x|\,),\;x\not=0, \\[2mm]
0 & \te{if} & v=0,\; x=0,\\[0mm]
+\infty & \te{if} & v\not=0,\;x=0.
\end{array}
\right.
\end{equation*}
The set $\D L(x,\cdot)=(-|x|,|x|\,)$  is not closed and the function $v\rightarrow L(x,v)$ is not bounded on this set for every $x\in\R\setminus\{0\}$. Moreover, the function  $(x,v)\rightarrow L(x,v)$  is not continuous on the set $\D L$. 
\end{Ex}


\section{Main Results}\label{section-3}
\noindent In this section we describe main results of the paper that concern  faithful representations with the compact control set. We start with proving that representations are not determined uniquely. In addition to this, they can be totally irregular.

We consider the  Hamiltonian $H:\R\times\R\rightarrow\R$ given by the formula  $H(x,p):=|p|$.\linebreak We notice that the triple $([-1,1],f,l)$ is a representation of this  Hamiltonian if functions $f,l:\R\times [-1,1]\rightarrow\R$ satisfy the following conditions:
\begin{equation}\label{wardlap}
|f(x,a)|\leq 1,\; f(x,1)=1,\;f(x,-1)=-1\;\;\;\te{and}\;\;\; l(x,a)\geq 0,\; l(x,1)=l(x,-1)=0.
\end{equation}
Let $i(\cdot)$ and $j(\cdot)$ be arbitrary functions on $\R$ with values in  $[0,\infty)$.
Then  functions
\begin{equation}\label{pfcn}
f_i(x,a):=a\,(1+|a|\,i(x))/(1+i(x)),\;\; l_j(x,a):=(1-|a|)\,j(x),\;\; x\in\R,\;\;a\in[-1,1]
\end{equation}
satisfy conditions \eqref{wardlap}. Therefore, every triple $([-1,1],f_i,l_j)$, where $f_i,l_j$ are given by \eqref{pfcn}, is a representation of the Hamiltonian $H(x,p)=|p|$. There  also exist representations with nonmeasurable (with respect to the state variable) functions $f_i,l_j$, for instance if $i(\cdot)$ and $j(\cdot)$ are not measurable.
However, our results show that from  the set of representations one can always choose a faithful representation.

\subsection{Necessary condition for the existence of a faithful representation}\label{ncfr}
We start this subsection with introducing the condition for an upper bound of the Lagrangian on its  effective domain.

\vspace{1.5mm}
\tn{\bf{(BLC)}} There exists a map $\lambda:[0,T]\times\R^n\rightarrow\R$ measurable in $t$ for every $x\in\R^n$ and continuous in $x$ for every $t\in[0,T]$ such that $L(t,x,v)\leq\lambda(t,x)$ for every $(t,x)\in[0,T]\times\R^n$ and $v\in\D L(t,x,\cdot)$. Assume further that for any $R>0$ there exists a measurable map $k_R:[0,T]\to[0,+\infty)$ such that $\lambda(t,\cdot)$ is $k_R(t)$-Lipschitz on $\B_R$ for every $t\in[0,T]$.

\begin{Th}\label{podr22_th_wk}
Let $A$ be a nonempty compact set. We suppose that  $f:[0,T]\times\R^n\times A\rightarrow\R^n$ and  $l:[0,T]\times\R^n\times A\rightarrow\R$  are  measurable in $t$ for all $(x,a)\in\R^n\times A$ and continuous in $(x,a)$ for all $t\in[0,T]$. Furthermore, we assume that for every $R>0$ there exists a measurable map  $k_R:[0,T]\to[0,+\infty)$ such that $l(t,\cdot,a)$ is $k_R(t)$-Lipschitz on $\B_R$ for every $t\in[0,T]$ and $a\in A$. If the triple $(A,f,l)$ is a representation of $H$, then $L(t,x,\cdot\,):=H^{\ast}(t,x,\cdot\,)$  satisfies the condition \te{(BLC)} with the same map $k_R(\cdot)$. Moreover, if $\,f,l\,$ are continuous, then $\,\lambda\,$ is also continuous.
\end{Th}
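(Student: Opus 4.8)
The plan is to take the bounding function to be
\[
\lambda(t,x):=\max_{a\in A}\,l(t,x,a),
\]
and to verify \te{(BLC)} by first describing the epigraph $E_L(t,x)$ in terms of the graph set
\[
\Gamma(t,x):=\{\,(f(t,x,a),l(t,x,a))\mid a\in A\,\}\subset\R^{n+1}.
\]
Since $A$ is compact and $(x,a)\mapsto(f,l)(t,x,a)$ is continuous, $\Gamma(t,x)$ is compact; in particular the supremum in the representation $H(t,x,p)=\sup_{a\in A}\{\langle p,f(t,x,a)\rangle-l(t,x,a)\}$ is finite and attained, so $H(t,x,\cdot)$ is a real-valued convex function and $\lambda(t,x)$ is a well-defined real number.

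The heart of the argument is a structural identity. Rewriting the representation as $H(t,x,p)=\sup_{(v,\eta)\in\Gamma(t,x)}\{\langle p,v\rangle-\eta\}$, I recognise the right-hand side as the support function of $\Gamma(t,x)$ evaluated at $(p,-1)$; since the objective is affine in $(v,\eta)$ and decreasing in $\eta$, the supremum is unchanged when $\Gamma(t,x)$ is replaced by $C(t,x):=\te{conv}\,\Gamma(t,x)+(\{0\}\times[0,+\infty))$. As $\te{conv}\,\Gamma(t,x)$ is the convex hull of a compact subset of $\R^{n+1}$ it is compact, so $C(t,x)$ is closed, convex, and is the epigraph of a proper lower semicontinuous convex function $\ell(t,x,\cdot)$. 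Then $H(t,x,\cdot)=\ell(t,x,\cdot)^{\ast}$, and conjugating once more gives $L(t,x,\cdot)=H(t,x,\cdot)^{\ast}=\ell(t,x,\cdot)$, i.e. $E_L(t,x)=C(t,x)$. Projecting onto the $v$-coordinate yields $\D L(t,x,\cdot)=\te{conv}\,\{f(t,x,a)\mid a\in A\}$, and for any $v$ in this set, written by Carath\'eodory as a finite convex combination $v=\sum_i\theta_i f(t,x,a_i)$, the point $(v,\sum_i\theta_i l(t,x,a_i))$ lies in $\te{conv}\,\Gamma(t,x)\subset E_L(t,x)$; hence $L(t,x,v)\leq\sum_i\theta_i l(t,x,a_i)\leq\max_{a\in A}l(t,x,a)=\lambda(t,x)$, which is exactly the upper bound required in \te{(BLC)}.

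It then remains to check the regularity of $\lambda$. Measurability in $t$ for fixed $x$ I would obtain by choosing a countable dense set $\{a_j\}\subset A$ and writing $\lambda(t,x)=\sup_j l(t,x,a_j)$ (using continuity in $a$), a countable supremum of $t$-measurable functions. Continuity in $x$ for fixed $t$ is the standard maximum-theorem statement for the continuous integrand $l(t,\cdot,\cdot)$ over the fixed compact set $A$, and under the extra hypothesis that $f,l$ are continuous the same theorem gives joint continuity of $\lambda$. Finally, the Lipschitz estimate is the clean step that produces the same constant $k_R$: for $x,y\in\B_R$, choosing $a^{\ast}$ with $\lambda(t,x)=l(t,x,a^{\ast})$ gives $\lambda(t,x)-\lambda(t,y)\leq l(t,x,a^{\ast})-l(t,y,a^{\ast})\leq k_R(t)\,|x-y|$, and by symmetry $|\lambda(t,x)-\lambda(t,y)|\leq k_R(t)\,|x-y|$, so $\lambda(t,\cdot)$ is $k_R(t)$-Lipschitz on $\B_R$ with the very constant assumed for $l$.

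The hard part will be the structural identity $E_L(t,x)=\te{conv}\,\Gamma(t,x)+(\{0\}\times[0,+\infty))$: one must justify replacing $\Gamma$ by its convex-plus-cone hull inside the support function, invoke compactness of $A$ to guarantee that this hull is already closed (so that no spurious boundary points enter $E_L$ and the domain equals $\te{conv}\,\{f(t,x,a)\}$ exactly), and apply biconjugation to pass from $H=\ell^{\ast}$ back to $L=\ell$. Once this identity is in place, the upper bound and all the regularity claims follow from routine convex-analytic and maximum-theorem arguments.
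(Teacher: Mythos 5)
Your proof is correct, and it reaches the paper's conclusion by a genuinely different mechanism, even though the underlying convexification idea and the final function $\lambda(t,x)=\max_{a\in A}l(t,x,a)$ coincide with the paper's (the paper's $\lambda(t,x)=\sup_{\mathbbmtt{a}}\mathbbmtt{l}(t,x,\mathbbmtt{a})$ equals your maximum, since $\mathbbmtt{l}$ ranges over convex combinations of values of $l$). The paper convexifies the representation itself: Lemma \ref{lem-tran-r1w1} builds a new compact control set $\mathbbmtt{A}=A^{n+1}\times\Delta$ with convex-combination dynamics so that $\mathbbmtt{f}(t,x,\mathbbmtt{A})=\te{conv}f(t,x,A)$, and then applies two separation arguments: Lemma \ref{lem-tran-r1} (epigraph separation, giving $L(t,x,f(t,x,a))\leq l(t,x,a)$) and Lemma \ref{lem-tran-r2} (a separation-plus-scaling contradiction, giving $\mathbbmtt{f}(t,x,\mathbbmtt{A})=\D L(t,x,\cdot)$), from which the bound $L(t,x,v)\leq\lambda(t,x)$ follows pointwise. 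You instead work directly with the graph set $\Gamma(t,x)$ in $\R^{n+1}$ and prove the single structural identity $E_L(t,x)=\te{conv}\,\Gamma(t,x)+(\{0\}\times[0,+\infty))$ via support functions and Fenchel--Moreau biconjugation; your justification of the delicate points is sound: $\te{conv}\,\Gamma(t,x)$ is compact by Carath\'eodory, a compact set plus a closed cone is closed, the closed upward-stable set $C(t,x)$ has closed bounded-below vertical sections and hence is exactly the epigraph of a proper, convex, lower semicontinuous $\ell(t,x,\cdot)$, and $L=\ell^{\ast\ast}=\ell$. This one identity simultaneously yields $\D L(t,x,\cdot)=\te{conv}f(t,x,A)$ and the bound $L(t,x,v)\leq\sum_i\theta_i\,l(t,x,a_i)\leq\lambda(t,x)$, replacing the paper's two separation lemmas and the explicit parametrized convexification; in exchange, the paper's route produces an honest convexified representation $(\mathbbmtt{A},\mathbbmtt{f},\mathbbmtt{l})$ with inherited $t$-measurability and Lipschitz regularity as a reusable object, whereas your route yields slightly sharper structural information (an exact description of $E_L$) with less machinery. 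Your regularity checks --- countable dense subset of $A$ for $t$-measurability, the maximum theorem for continuity in $x$ (and joint continuity when $l$ is continuous), and the maximizer trick for the $k_R(t)$-Lipschitz estimate with the same constant --- match the paper's almost verbatim and are complete.
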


The proof of Theorem  \ref{podr22_th_wk} is given in Section \ref{wk-kon-istr}.

\begin{Cor}
Let $A$ be a nonempty compact set.  Assume that  $f:[0,T]\times\R^n\times A\rightarrow\R^n$ and  $l:[0,T]\times\R^n\times A\rightarrow\R$  are  measurable in $t$ for all $(x,a)\in\R^n\times A$ and continuous in $(x,a)$ for all $t\in[0,T]$. Assume also the following:
\begin{enumerate}
\item[\te{\bf{(i)}}] for every $R>0$ there exists a measurable map  $k_R:[0,T]\to[0,+\infty)$  such that $|f(t,x,a)-f(t,y,a)|\leq k_R(t)\,|x-y|$ and $|l(t,x,a)-l(t,y,a)|\leq k_R(t)\,|x-y|$ for every  $t\in[0,T]$, $x,y\in\B_R$, $a\in A$\te{;}\vspace{1mm}
\item[\te{\bf{(ii)}}] there exists a measurable map $c:[0,T]\to[0,+\infty)$ such that for every $t\in[0,T]$, $x\in\R^n$, $a\in A$ one has  $|f(t,x,a)|\leq c(t)(1+|x|)$.
\end{enumerate}
If the triple $(A,f,l)$ is a representation of $H$, then $H$ satisfies \te{(H1)$-$(H4)}, \te{(HLC)} and  $L(t,x,\cdot\,):=H^{\ast}(t,x,\cdot\,)$  satisfies  \te{(BLC)}. Moreover, if $\,f,l\,$ are continuous, then $\,H\,$ and $\,\lambda\,$ are also continuous.
\end{Cor}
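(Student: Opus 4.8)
The plan is to work directly from the representation identity $H(t,x,p)=\sup_{a\in A}\{\langle p,f(t,x,a)\rangle-l(t,x,a)\}$ and verify each listed property in turn, deferring (BLC) to Theorem \ref{podr22_th_wk}. I begin with the three properties that are essentially algebraic. For fixed $(t,x)$ the map $p\mapsto\langle p,f(t,x,a)\rangle-l(t,x,a)$ is affine, so $H(t,x,\cdot)$ is a supremum of affine functions and hence convex; this is (H3). For (H4) I would use the elementary inequality $|\sup_a F_a-\sup_a G_a|\leq\sup_a|F_a-G_a|$: with $F_a=\langle p,f(t,x,a)\rangle-l(t,x,a)$ and $G_a=\langle q,f(t,x,a)\rangle-l(t,x,a)$ the $l$-terms cancel, leaving
\[
|H(t,x,p)-H(t,x,q)|\leq\sup_{a\in A}|\langle p-q,f(t,x,a)\rangle|\leq c(t)\,(1+|x|)\,|p-q|
\]
by hypothesis (ii). The same sup-difference inequality applied across the state variable, now keeping $p$ fixed and invoking the two Lipschitz bounds in (i), yields
\[
|H(t,x,p)-H(t,y,p)|\leq\sup_{a\in A}\big(|p|\,|f(t,x,a)-f(t,y,a)|+|l(t,x,a)-l(t,y,a)|\big)\leq k_R(t)\,(1+|p|)\,|x-y|
\]
for $x,y\in\B_R$, which is exactly (HLC) with the map $k_R(\cdot)$ taken from (i).

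Next I would treat the regularity in the remaining variables. Since $f$ and $l$ are continuous in $(x,a)$, the integrand $(x,p,a)\mapsto\langle p,f(t,x,a)\rangle-l(t,x,a)$ is jointly continuous, and as $A$ is compact the supremum over $a\in A$ is continuous in $(x,p)$ (the standard continuity of the value of a maximization over a fixed compact parameter set, e.g.\ Berge's maximum theorem); this gives (H2). For (H1) I would note that for fixed $(x,p)$ the map $(t,a)\mapsto\langle p,f(t,x,a)\rangle-l(t,x,a)$ is a Carath\'eodory function on $[0,T]\times A$ — measurable in $t$, continuous in $a$ — with $A$ compact, so its pointwise supremum over $a\in A$ is a measurable function of $t$. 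When $f,l$ are continuous in all variables the integrand is jointly continuous in $(t,x,p,a)$, and the same compactness argument makes $H$ continuous in $(t,x,p)$.

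Finally, for (BLC) I would simply appeal to Theorem \ref{podr22_th_wk}: hypothesis (i) in particular asserts that $l(t,\cdot,a)$ is $k_R(t)$-Lipschitz on $\B_R$ for every $t$ and $a$, which is precisely the extra hypothesis of that theorem, so (BLC) holds for $L(t,x,\cdot)=H^{\ast}(t,x,\cdot)$ with the same $k_R(\cdot)$; the continuity of $\lambda$ in the continuous case is the concluding assertion of the same theorem. The only step that requires genuine care beyond bookkeeping is the measurability claim (H1), where the passage from Carath\'eodory continuity in $a$ to measurability of the supremum over the compact set $A$ must be justified by a normal-integrand / measurable-projection argument rather than by naive pointwise reasoning; the continuity statements depend essentially on the compactness of $A$, and the two Lipschitz estimates are immediate once the sup-difference inequality is in place.
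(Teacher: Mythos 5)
Your proof is correct and takes essentially the approach the paper intends: the corollary is stated there without proof as an immediate consequence of Theorem \ref{podr22_th_wk}, and your direct verification of (H1)--(H4) and (HLC) from the sup-formula (convexity as a supremum of affine maps, the sup-difference inequality with (i) and (ii), compactness of $A$ for continuity), with (BLC) and the continuity of $\lambda$ delegated to that theorem, is exactly the omitted routine argument. One small simplification: for (H1), since $A$ is compact, hence separable, and the integrand is continuous in $a$, the supremum coincides with a supremum over a countable dense subset of $A$, so plain pointwise reasoning already yields measurability and no normal-integrand or measurable-projection machinery is needed.
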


\begin{Rem}
It follows from Theorem \ref{podr22_th_wk} that the condition (BLC) is necessary 
for the existence of a continuous and $t$-measurable faithful representation $(A,f,l)$
with a compact control set $A$. Therefore, neither continuous nor $t$-measurable faithful representations $(A,f,l)$ with the compact control set $A$ exists for the Hamiltonian from the Example \ref{ex-4} because the function $v\rightarrow L(t,x,v)$ from this example is not upper bounded on the effective domain.
\end{Rem}

\subsection{Sufficient condition for the existence of a faithful representation} This subsection is devoted to a new representation theorem  with the compact control set.

\begin{Th}[\bf{Representation}]\label{th-rprez-glo12}
Assume \te{(H1)$-$(H4)}, \te{(HLC)} and \te{(BLC)}. Then there exist  $f:[0,T]\times\R^n\times \B\rightarrow\R^n$ and $l:[0,T]\times\R^n\times \B\rightarrow\R$, measurable in $t$ for all $(x,a)\in\R^n\times \B$ and continuous in $(x,a)$ for all $t\in[0,T]$, such that for every $t\in[0,T]$, $x,p\in\R^n$
\begin{equation*}
 H(t,x,p)=\sup_{\;\;a\in \B}\,\{\,\langle\, p,f(t,x,a)\,\rangle-l(t,x,a)\,\}
\end{equation*}
and $f(t,x,\B)=\D H^{\ast}(t,x,\cdot)$, where $\B$ is the closed unit ball in $\R^{n+1}$\!. 
Moreover, we have:
\begin{enumerate}
\item[\te{\bf{(A1)}}] For any $R>0$ and for all $t\in[0,T]$, $x,y\in \B_R$, $a,b\in\B$\vspace{-1mm}
\begin{equation*}
\begin{array}{l}
|f(t,x,a)-f(t,y,b)|\leq 10(n+1)(\omega_R(t)+3(1+R)\,k_R(t)+1)(|x-y|+|a-b|),\\[0.3mm]
|l(t,x,a)-l(t,y,b)|\leq 10(n+1)(\omega_R(t)+3(1+R)\,k_R(t)+1)(|x-y|+|a-b|),\\[0.3mm]
\it{where}\; \omega_R(t):=|\lambda(t,0)|+|H(t,0,0)|+c(t)(2+R).
\end{array}\vspace{-1mm}
\end{equation*}
\item[\te{\bf{(A2)}}] $|f(t,x,a)|\leq c(t)(1+|x|)$ for all $t\in[0,T]$, $x\in\R^n$, $a\in\B$.\vspace{1mm}
\item[\te{\bf{(A3)}}] $\G H^{\ast}(t,x,\cdot)\subset\bigcup_{a\in\B}(f(t,x,a),l(t,x,a))$ for all $t\in[0,T]$, $x\in\R^n$.\vspace{1mm}
\item[\te{\bf{(A4)}}] Furthermore, if $H$, $\lambda(\cdot,\cdot)$, $c(\cdot)$ are continuous, so are $f,l$.
\end{enumerate}
\end{Th}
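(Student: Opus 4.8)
The plan is to realise the representation as a \emph{Lipschitz parametrization of the capped epigraph} of the Lagrangian. Set $L(t,x,\cdot):=H^\ast(t,x,\cdot)$ and recall from Corollary~\ref{wrow-wm} that $E_L(t,x)=\E L(t,x,\cdot)$ is nonempty, closed and convex, has closed graph and is lower semicontinuous in $x$, is measurable in $t$, and satisfies $\|\D L(t,x,\cdot)\|\leq c(t)(1+|x|)$; moreover, by (HLC) and Corollary~\ref{hlc-cor-ner}, $x\mapsto E_L(t,x)$ is Hausdorff-Lipschitz with rate $2k_R(t)$ on $\B_R$. Using the bound $\lambda$ from (BLC) I define the capped map $\tilde E(t,x):=\{(v,\eta): L(t,x,v)\leq\eta\leq\lambda(t,x)\}$. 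Since $L(t,x,v)\geq\langle v,0\rangle-H(t,x,0)=-H(t,x,0)$ and $\|\D L(t,x,\cdot)\|\leq c(t)(1+|x|)$, each $\tilde E(t,x)$ is a nonempty compact convex subset of $\R^{n+1}$ sitting in an explicit ball whose radius is controlled by $\omega_R(t)$ together with $k_R(t)R$; and $L(t,x,v)\leq\lambda(t,x)$ for $v\in\D L(t,x,\cdot)$ (again (BLC)) gives both inclusions $\G L(t,x,\cdot)\subset\tilde E(t,x)\subset E_L(t,x)$ and the identity $\mathrm{proj}_{\R^n}\tilde E(t,x)=\D L(t,x,\cdot)$.

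The decisive step is to show that $x\mapsto\tilde E(t,x)$ stays Hausdorff-Lipschitz after the cap. Intersections of Lipschitz convex-valued maps are not Lipschitz in general, so one must exploit that the cut $\{\eta\leq\lambda(t,x)\}$ is horizontal. Given $(v,\eta)\in\tilde E(t,x)$, (ELC) produces $(v',\eta')\in E_L(t,y)$ within $2k_R(t)|x-y|$; replacing $\eta'$ by $\eta'':=\min\{\eta',\lambda(t,y)\}$ keeps $(v',\eta'')\in E_L(t,y)$ because $L(t,y,v')\leq\lambda(t,y)$ by (BLC), while the $k_R(t)$-Lipschitz continuity of $\lambda(t,\cdot)$ forces $\eta'-\lambda(t,y)\leq 3k_R(t)|x-y|$, so $(v',\eta'')\in\tilde E(t,y)$ lies within a fixed multiple of $k_R(t)|x-y|$ of $(v,\eta)$. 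This gives $\mathscr H(\tilde E(t,x),\tilde E(t,y))\leq C\,k_R(t)|x-y|$ on $\B_R$. This is exactly where (BLC) is indispensable, and it is the conceptual heart of the argument.

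Next I would parametrize the bounded, compact-convex-valued, Hausdorff-Lipschitz map $\tilde E$ by the closed unit ball $\B\subset\R^{n+1}$, obtaining $\e=(f,l)$ with $\e(t,x,\B)=\tilde E(t,x)$, continuous in $(x,a)$, measurable in $t$, and Lipschitz in $x$ at the stated rate. Here one cannot simply use the metric projection $a\mapsto\mathrm{proj}_{\tilde E(t,x)}(c(t,x)+\rho(t,x)a)$: projection onto a moving convex set is only $\tfrac12$-H\"older in the set once the projected point is far from it, whereas (A1) demands a genuine Lipschitz rate. Instead I would invoke a Steiner-point/recursive Lipschitz parametrization of Lipschitz convex-body-valued maps, whose Lipschitz constant scales linearly in the ambient dimension $n+1$; this is precisely the source of the factor $(n+1)$ in (A1). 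Taking center $c$ and radius $\rho$ Lipschitz in $x$ and measurable in $t$ from the enclosing ball of the first step, and composing the Lipschitz rates of $\tilde E$, $c$, $\rho$ and of the parametrization, produces the constant $10(n+1)(\omega_R(t)+3(1+R)k_R(t)+1)$; this accounting, not any new idea, is the bookkeeping behind (A1).

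Finally I would read off the remaining conclusions from $\G L(t,x,\cdot)\subset\e(t,x,\B)\subset E_L(t,x)$. The lower inclusion is (A3); it furnishes, for each $v\in\D L(t,x,\cdot)$, some $a$ with $(f,l)(t,x,a)=(v,L(t,x,v))$, whence $H(t,x,p)=\sup_v\{\langle p,v\rangle-L(t,x,v)\}\leq\sup_{a\in\B}\{\langle p,f\rangle-l\}$; the upper inclusion gives $l(t,x,a)\geq L(t,x,f(t,x,a))$, hence the reverse inequality and the representation formula, while $\mathrm{proj}_{\R^n}\e(t,x,\B)=\D L(t,x,\cdot)$ yields $f(t,x,\B)=\D H^\ast(t,x,\cdot)$. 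Estimate (A2) is immediate, since $f(t,x,a)\in\D L(t,x,\cdot)$ and $\|\D L(t,x,\cdot)\|\leq c(t)(1+|x|)$ by (E5). For (A4), if $H$, $\lambda$, $c$ are continuous then $L$ is lower semicontinuous and (E6) makes $(t,x)\mapsto E_L(t,x)$ continuous, so the cap and the enclosing data are continuous, $\tilde E$ is continuous as a set-valued map, and the parametrization delivers continuous $f,l$. The main obstacle is thus twofold: the Lipschitz continuity of the capped map $\tilde E$ (resolved by the horizontal-cut argument and (BLC)) and the construction of a genuinely Lipschitz, not merely H\"older, parametrization with the correct dimension-linear constant.
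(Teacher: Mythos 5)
Your proposal is correct in outline, and it reaches the squeeze $\G L(t,x,\cdot)\subset\e(t,x,\B)\subset\E L(t,x,\cdot)$ --- from which the representation formula, $f(t,x,\B)=\D L(t,x,\cdot)$, (A2) and (A3) follow exactly as in the paper's Proposition \ref{prop-reprezentacja H-ogr} --- but by a genuinely different decomposition. The paper never caps the epigraph: its Theorem \ref{th-oparam} parametrizes the \emph{unbounded} set-valued map $E_L$ directly, setting $\Phi(t,x,a)=E_L(t,x)\cap\B\bigl(\omega(t,x)a,\,2d(\omega(t,x)a,E_L(t,x))\bigr)$ and $\e=s_{n+1}\circ\Phi$, with the weight $\omega(t,x):=|\lambda(t,x)|+|H(t,x,0)|+c(t)(1+|x|)+1$ chosen (this is where (BLC) enters) so that $\G L(t,x,\cdot)\subset E_L(t,x)\cap\B_{\omega(t,x)}\subset\e(t,x,\B)$; the image may lie strictly between graph and epigraph, and the Lipschitz estimate in $x$ comes from Corollary \ref{hlc-cor-ner}, i.e.\ from the extended Hausdorff distance between the unbounded epigraphs. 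You instead compactify first, via the capped map $\tilde E(t,x)=\{(v,\eta):L(t,x,v)\leq\eta\leq\lambda(t,x)\}$, prove its Hausdorff--Lipschitz continuity by the horizontal-cut argument (which is sound: capping the $\eta$-coordinate at $\lambda(t,y)$ stays inside $E_L(t,y)$ precisely because of (BLC), and moves the point by at most a multiple of $k_R(t)|x-y|$ by the Lipschitz continuity of $\lambda(t,\cdot)$), and then parametrize \emph{onto} $\tilde E$ with the same $P$-plus-Steiner machinery (Lemmas \ref{lem-pmh} and \ref{lem-scmh}). What each route buys: yours reduces to the classical Lipschitz parametrization of compact-convex-valued Lipschitz maps and sidesteps the paper's delicate two-case lower-semicontinuity argument for $\Phi(t,\cdot,\cdot)$ (with compact values, Lemma \ref{lem-pmh} gives $\mathscr{H}$-continuity outright); the paper's route yields a parametrization theorem under weaker hypotheses (only measurability in $t$, closed graph and lower semicontinuity in $x$ --- no Lipschitz assumption and no (BLC) inside Theorem \ref{th-oparam} itself), which is what makes (A4) and the stability theorems of Section \ref{thms-stab} come for free from the same construction. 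Two small caveats you should discharge: the $t$-measurability of $x\mapsto\tilde E(t,x)$ (intersection of the measurable map $E_L(\cdot,x)$ with the half-space map determined by $\lambda(\cdot,x)$ --- routine, but it must be said), and the bookkeeping for (A1): your construction produces a constant of the same shape, but since the theorem asserts the specific bound $10(n+1)(\omega_R(t)+3(1+R)k_R(t)+1)$ you must actually verify that your cap-Lipschitz constant ($\leq 3k_R(t)$ as you note), the Lipschitz rate of your radius function on $\B_R$ (at most $2k_R(t)+c(t)$), and the factor $5(n+1)$ from the $P$/Steiner composition combine to land within it --- plausible, but not yet checked in your sketch.
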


The proof of Theorem \ref{th-rprez-glo12} is given in Section \ref{pofrepth}.
Now we point out the differences between our construction of a faithful representation and the ones presented in \cite{F-S,FR}. In order to do this, we consider two following examples.

\begin{Ex}\label{prz-rep-111}
Let the Hamiltonian $H$ be as in Example \ref{ex-1}. This Hamiltonian satisfies
assumptions (H1)$-$(H4),\,(HLC) and (BLC).
Our construction of representation $(A,f,l)$ of this Hamiltonian
leads to the set $A=[-1,1]\times[-1,1]$ and functions:
\begin{equation*}
f(x,a_1,a_2)=a_1|x|, \qquad l(x,a_1,a_2)=|a_1|+|a_2|(1-|a_1|),
\end{equation*}
that are the Lipschitz continuous. However, construction of representation  $(A,f,l)$ of this Hamiltonian that is  presented in \cite{F-S,FR} leads to the set  $A=[-1,1]$ and functions:
\begin{equation*}
f(x,a)=a|x|, \qquad
l(x,a)=L(x,f(x,a))=\left\{
\begin{array}{ccl}
|a| & \te{if} & x\not=0 \\[1mm]
0 & \te{if} & x=0.
\end{array}
\right.
\end{equation*}
We notice that the function  $l$ is discontinuous with respect to $x$ for all $a\in[-1,1]\setminus\{0\}$.
\end{Ex}

\begin{Ex}\label{prz-rep-112}
Let the Hamiltonian $H$ be as in Example \ref{ex-2}. This Hamiltonian satisfies
assumptions (H1)$-$(H4),\,(HLC) and (BLC).
Our construction of representation $(A,f,l)$ of this Hamiltonian
leads to the set $A=\{(a_1,a_2)\in\R\times\R\mid a_1^2+a_2^2\leq 1\}$ and functions:
\begin{equation*}
f(x,a_1,a_2)=a_1, \qquad l(x,a_1,a_2)=a_2+|x|,
\end{equation*}
that satisfy the Lipschitz continuity.  However, construction of representation  $(A,f,l)$ of this Hamiltonian that is presented in \cite{F-S,FR} leads to the set $A=[-1,1]$ and functions:
\begin{equation*}
f(x,a)=a, \qquad
l(x,a)=L(x,f(x,a))=|x|-\sqrt{1-a^2}.
\end{equation*}
We notice that the function $l$ is continuous, but not Lipschitz continuous with respect to the variable  $a$. 
\end{Ex}

\begin{Rem}
If (H1)$-$(H4), (HLC), (BLC) hold for a.e. $t\in[0,T]$, then  the conclusion of Theorem \ref{th-rprez-glo12} also holds for a.e. $t\in[0,T]$. Indeed, to show this, we simply redefine the Hamiltonian. More precisely, if $H$ satisfies (H1)$-$(H4), (HLC), (BLC) for a.e. $t\in[0,T]$, then there exist  a measure zero set $\mathcal{N}$ and a Hamiltonian $\tilde{H}$ such that $\tilde{H}(t,\cdot,\cdot)=0$ for all $t\in\mathcal{N}$ and $\tilde{H}(t,\cdot,\cdot)=H(t,\cdot,\cdot)$ for all $t\in[0,T]\setminus\mathcal{N}$. Moreover, $\tilde{H}$ satisfies  (H1)$-$(H4), (HLC), (BLC) for all $t\in[0,T]$.
\end{Rem}

\vspace*{-4mm}
\pagebreak

\subsection{Stability of representations}\label{subsecstarep}
In this subsection we will see that the faithful representation obtained in the previous subsection is stable.

\begin{Th}\label{thm-rep-stab2}
Let $H_i,H:[0,T]\times\R^n\times\R^n\to\R$, $i\in\N$, satisfy \te{(H1)$-$(H4)}, \te{(HLC)}. Assume that $L_i$, $L$, $i\in\N$, are given by \eqref{tran1} and satisfy \te{(BLC)}. Let $H_i,\lambda_i,c_i$, $i\in\N$, be continuous functions. We consider the representations $(\B,f_i,l_i)$ and $(\B,f,l)$ of $H_i$ and $H$, respectively,  defined as in the proof of Theorem~\ref{th-rprez-glo12}. If $H_i,\lambda_i,c_i$ converge uniformly on compacts to $H,\lambda,c$, respectively, then $f_i$ converge to $f$ and $l_i$ converge to $l$ uniformly on compacts in $[0,T]\times\R^n\times\B$.
\end{Th}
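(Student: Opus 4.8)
The plan is to view both representations as the output of one and the same explicit construction applied to the data $(H,L,\lambda,c)$ and $(H_i,L_i,\lambda_i,c_i)$, and to show that this construction is continuous with respect to uniform convergence on compacts. Recall that in the proof of Theorem \ref{th-rprez-glo12} the functions $f(t,x,\cdot)$ and $l(t,x,\cdot)$ are produced by parametrizing, over the unit ball $\B\subset\R^{n+1}$, the compact convex slice of the epigraph $E_L(t,x)$ cut out by the truncation dictated by the bounds $c$ and $\lambda$ (as one reads off from Examples \ref{prz-rep-111} and \ref{prz-rep-112}, where the extra ball coordinate sweeps the height above the graph up to the level $\lambda$, while $f(t,x,\B)=\D H^{\ast}(t,x,\cdot)$ sweeps the effective domain). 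Since this parametrization is assembled from Lipschitz-continuous operations on the underlying convex sets, it suffices to prove that these truncated sets converge in extended Hausdorff distance $\mathscr{H}$, uniformly for $(t,x)$ in compact subsets of $[0,T]\times\R^n$, and then to push this convergence through the construction.

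First I would establish convergence of the Lagrangians. The delicate analytic point is that $L_i(t,x,\cdot)=H_i^{\ast}(t,x,\cdot)$ may take the value $+\infty$ and that the effective domains $\D L_i(t,x,\cdot)$ move with $i$. By Corollary \ref{wrow-wm}\,(E5) we have $\|\D L_i(t,x,\cdot)\|\le c_i(t)(1+|x|)$ and $\|\D L(t,x,\cdot)\|\le c(t)(1+|x|)$, so using $c_i\to c$ all the relevant domains lie in a common ball once $(t,x)$ ranges over a compact set and $i$ is large. On this common ball the uniform convergence $H_i\to H$ of the finite convex Hamiltonians forces convergence of the conjugates, hence $L_i(t,x,\cdot)\to L(t,x,\cdot)$; combined with the upper bounds $L_i\le\lambda_i$, $L\le\lambda$ on the domains and with $\lambda_i\to\lambda$, this yields $\mathscr{H}(E_{L_i}(t,x),E_L(t,x))\to 0$ for the truncated epigraphs. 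To upgrade this pointwise-in-$(t,x)$ statement to uniform convergence on compacts I would derive a quantitative bound expressing $\mathscr{H}(E_{L_i}(t,x),E_L(t,x))$ in terms of the sup-norm distances $\sup|H_i-H|$, $\sup|\lambda_i-\lambda|$, $\sup|c_i-c|$ over the compact region (alternatively invoking the Hausdorff-Lipschitz continuity of $x\to E_L(t,x)$ from Corollary \ref{hlc-cor-ner} to furnish a common modulus and run an Arzel\`a--Ascoli-type argument).

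Next I would feed this set convergence into the stability of the parametrization. Because $(f,l)$ is built from Lipschitz set operations (centering the truncated slice, radial/gauge scaling onto its boundary, and the final map over $\B$), a Hausdorff bound $\mathscr{H}(E_{L_i}(t,x),E_L(t,x))\le\varepsilon_i$ propagates to $\sup_{a\in\B}\big(|f_i(t,x,a)-f(t,x,a)|+|l_i(t,x,a)-l(t,x,a)|\big)\le C\,\varepsilon_i$, with $C$ depending only on the compact set, in exactly the same way that the Lipschitz estimate (A1) was extracted from the bounds on the sets. As $\varepsilon_i\to 0$ uniformly on compacts, the asserted uniform convergence $f_i\to f$ and $l_i\to l$ on $[0,T]\times\R^n\times\B$ follows.

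The main obstacle I expect is the first step: extracting \emph{uniform} (in $(t,x)$, on compacts) Hausdorff convergence of the truncated epigraphs from convergence of the Hamiltonians, since this requires controlling the Legendre--Fenchel conjugate in the presence of the value $+\infty$ and of moving effective domains. The duality between uniform convergence of the finite convex Hamiltonians and convergence of their conjugates, together with the uniform domain bound from (H4) and the upper bound from (BLC), is what makes this step go through; once it is secured, the propagation through the explicit parametrization is a routine continuity argument mirroring the derivation of (A1).
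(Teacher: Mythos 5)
Your second step is solid and is exactly the mechanism the paper itself uses: by Lemma \ref{lem-pmh} and Lemma \ref{lem-scmh} one gets $|(f_i,l_i)(t,x,a)-(f,l)(t,x,a)|\leq 5(n+1)\,[\,\mathscr{H}(E_{L_i}(t,x),E_L(t,x))+|\omega_i(t,x)-\omega(t,x)|\,]$ on $\B$, and $\omega_i\to\omega$ uniformly on compacts follows from the hypotheses. Where you genuinely diverge from the paper is step one. The paper never proves (nor needs) Hausdorff convergence of the unbounded epigraphs: Lemma \ref{klconv} extracts from Wijsman's theorem only Painlev\'e--Kuratowski convergence $E_{L_i}(t_i,x_i)\to E_L(t_0,x_0)$ along arbitrary sequences, and then handles the $a$-dependent compact sets $\Phi_i=E_{L_i}\cap G_i$ via the intersection theorem for convex sets (Lemma \ref{thmozkp}), a separate degenerate case $\I G_0=\emptyset$, and the equivalence of set convergence and $\mathscr{H}$-convergence inside a common compact set (Lemma \ref{zmkh}); continuous convergence of the selections then yields uniform convergence on compacts. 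Your route, if step one is secured, buys a cleaner quantitative finish with no case analysis; the paper's route deliberately asks much less of step one.

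The gap is that neither of your two proposed mechanisms for step one works as stated. First, no bound of $\mathscr{H}(E_{L_i}(t,x),E_L(t,x))$ in terms of the sup-distances of $H_i-H$, $\lambda_i-\lambda$, $c_i-c$ over a \emph{fixed} compact region can exist: by the duality in Propositions \ref{0rwwl1} and \ref{0lnrwwl1}, $\mathscr{H}$-closeness of the epigraphs amounts to the global weighted estimate $|H_i(t,x,p)-H(t,x,p)|\leq\varepsilon\,(1+|p|)$ for \emph{all} $p\in\R^n$, which sees the recession behavior, i.e.\ the support functions $\sigma(\D L_i(t,x,\cdot),u)=\lim_{s\to\infty}H_i(t,x,su)/s$ — invisible on any fixed $p$-ball. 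What is true, and is the repair, is an exhaustion estimate: since (BLC) and $L_i\geq -|H_i(t,x,0)|$ give $R\,|\sigma(\D L_i(t,x,\cdot),u)-\sigma(\D L(t,x,\cdot),u)|\leq \sup_{|p|\leq R}|H_i-H|+\lambda_i(t,x)+\lambda(t,x)+|H_i(t,x,0)|+|H(t,x,0)|$, one must let $R=R_i\to\infty$ along the uniform-on-compacts convergence to make the weighted estimate small; only then does the Lipschitz propagation apply (note also that Lemma \ref{lem-pmh} must be fed the \emph{full} epigraphs, so you need the easy bridge that adding the common vertical ray $\{0\}^n\times[0,\infty)$ does not increase $\mathscr{H}$). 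Second, the Arzel\`a--Ascoli fallback is unavailable: Theorem \ref{thm-rep-stab2} does not assume a common or convergent $k_R$ for the sequence $H_i$ (only Corollary \ref{cor-rep-stab2} does), so Corollary \ref{hlc-cor-ner} gives each $E_{L_i}(t,\cdot)$ its own Lipschitz constant $k_{R\,i}(t)$, possibly blowing up in $i$, and there is no common modulus; uniformity on compacts must instead come from a sequential (continuous-convergence) argument, which is precisely the paper's device. Finally, a small caution: "uniform convergence of $H_i$ forces convergence of the conjugates" is false pointwise (take $L_i=\psi_{\{1/i\}}$, $L=\psi_{\{0\}}$, $H_i(p)=p/i\to 0$ uniformly on compacts); it holds only epigraphically, which is the form you actually need.
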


\begin{Th}\label{thm-rep-stab4}
Let $H_i,H:[0,T]\times\R^n\times\R^n\to\R$, $i\in\N$, satisfy \te{(H1)$-$(H4)}, \te{(HLC)}. Assume that $L_i$, $L$, $i\in\N$, are given by \eqref{tran1} and satisfy \te{(BLC)}. We consider the representations $(\B,f_i,l_i)$ and $(\B,f,l)$ of $H_i$ and $H$, respectively, defined as in the proof of Theorem~\ref{th-rprez-glo12}. If $H_i(t,\cdot,\cdot),\;\lambda_i(t,\cdot)$ converge uniformly on compacts to $H(t,\cdot,\cdot),\;\lambda(t,\cdot)$ , respectively, and $c_i(t)\to c(t)$ for all $t\in[0,T]$, then $f_i(t,\cdot,\cdot)$ converge to $f(t,\cdot,\cdot)$ and $l_i(t,\cdot,\cdot)$ converge to $l(t,\cdot,\cdot)$ uniformly on compacts in $\R^n\times\B$ for all $t\in[0,T]$.
\end{Th}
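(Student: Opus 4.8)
The plan is to establish pointwise-in-$t$ stability by tracking how the representation $(f,l)$ depends on the data $(H,\lambda,c)$ through the construction in the proof of Theorem \ref{th-rprez-glo12}. The essential observation is that for fixed $t$ the functions $f(t,\cdot,\cdot)$ and $l(t,\cdot,\cdot)$ are manufactured from the set-valued map $x\mapsto E_L(t,x)=\E L(t,x,\cdot)$ together with the scalar data $\lambda(t,\cdot)$ and $c(t)$, via a parametrization of the epigraph. Hence the first step is to extract from that construction an explicit (or at least modulus-controlled) dependence of $(f(t,x,a),l(t,x,a))$ on the epigraph $E_L(t,x)$, the bound $\lambda(t,x)$, and the growth constant $c(t)$. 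Once this is isolated, convergence of the representations reduces to convergence of these three ingredients.

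Next I would show that the hypotheses transfer to the objects actually used in the construction. Uniform-on-compacts convergence $H_i(t,\cdot,\cdot)\to H(t,\cdot,\cdot)$ of convex-in-$p$ functions forces convergence of the Legendre--Fenchel transforms $L_i(t,\cdot,\cdot)\to L(t,\cdot,\cdot)$, and thus Painlev\'e--Kuratowski convergence of the epigraphical maps $E_{L_i}(t,\cdot)\to E_L(t,\cdot)$; here I would invoke the standard duality between epi-convergence of convex functions and convergence of their conjugates (cf. \cite{R-W}), applied slicewise in $x$ and then upgraded to a set convergence using (E1)--(E3). The assumed convergence $\lambda_i(t,\cdot)\to\lambda(t,\cdot)$ and $c_i(t)\to c(t)$ supplies the remaining two ingredients directly. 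The uniform Lipschitz bound (A1), valid for every representation in the family with modulus controlled by $\omega_R$ and $k_R$, together with the uniform growth bound (A2), yields equi-Lipschitz continuity and local equiboundedness of the family $\{f_i(t,\cdot,\cdot),l_i(t,\cdot,\cdot)\}$ on each $\B_R\times\B$; this is what prevents the parametrization from oscillating and is the quantitative engine behind the argument.

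With equicontinuity in hand, I would run an Arzel\`a--Ascoli / subsequence argument: every subsequence of $(f_i(t,\cdot,\cdot),l_i(t,\cdot,\cdot))$ has a further subsequence converging uniformly on compacts to some limit $(\tilde f,\tilde l)$, and the step that pins down the limit is to verify that $(\tilde f,\tilde l)$ satisfies the defining properties of the representation built from the limiting data $(E_L,\lambda,c)$ — namely the supremum identity $H(t,x,p)=\sup_a\{\langle p,\tilde f\rangle-\tilde l\}$ and the covering/range conditions analogous to (A3) and $f(t,x,\B)=\D H^\ast(t,x,\cdot)$. Because the construction in Theorem \ref{th-rprez-glo12} is \emph{deterministic} given $(E_L,\lambda,c)$, the limit must coincide with the canonically constructed $(f(t,\cdot,\cdot),l(t,\cdot,\cdot))$, so the full sequence converges. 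The main obstacle I anticipate is precisely this last identification: I must show that the explicit parametrization formula is continuous, as a map into $C(\B_R\times\B)$, with respect to Kuratowski convergence of the unbounded sets $E_{L_i}(t,\cdot)$. The unboundedness of the epigraphs (emphasized in the introduction as the new source of difficulty) means I cannot simply pass to Hausdorff convergence; instead I expect to need the growth bound (A2)/(E5) to confine the relevant portion of each epigraph to a compact region depending on $R$ and $c(t)$, after which Kuratowski and local Hausdorff convergence agree and the continuity of the parametrization can be read off from its construction.
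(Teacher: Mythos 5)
Your proposal contains two genuine gaps, both concentrated in the Arzel\`a--Ascoli scaffolding rather than in your (correct) final intuition. First, the identification step fails as stated: you propose to pin down a subsequential limit $(\tilde f,\tilde l)$ by verifying that it satisfies ``the defining properties of the representation'' --- the supremum identity, the range condition $f(t,x,\B)=\D H^{\ast}(t,x,\cdot)$, and the covering condition (A3). But these properties do \emph{not} determine the representation uniquely; the paper makes exactly this point in Section \ref{section-3}, where for $H(x,p)=|p|$ it exhibits infinitely many (even nonmeasurable) triples satisfying the representation identity. Determinism of the construction given the data $(E_L,\lambda,c)$ does not transfer to the limit unless you prove the construction map itself is continuous in the data --- which is the entire content of the theorem, so the subsequence argument is circular as proposed. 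Second, the equicontinuity input for Arzel\`a--Ascoli is not available: the bound (A1) for $(f_i,l_i)$ involves the Lipschitz moduli $k_{R,i}(t)$ of $H_i$ from (HLC), and the hypotheses of Theorem \ref{thm-rep-stab4} do not assume these are uniformly bounded in $i$ (contrast Corollary \ref{cor-rep-stab2}, which assumes $k_{R\,i}$ converge, and Corollary \ref{cor-rep-stab4}, which assumes a common $k_R$). So the family $\{(f_i(t,\cdot,\cdot),l_i(t,\cdot,\cdot))\}$ need not be equi-Lipschitz under the stated assumptions.

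The paper's proof dispenses with compactness arguments entirely and does directly what your last paragraph anticipates: it proves \emph{continuous convergence} of the explicit construction, showing $\e_i(t,x_i,a_i)=s_{n+1}(\Phi_i(t,x_i,a_i))\to \e_0(t,x_0,a_0)$ for every sequence $(x_i,a_i)\to(x_0,a_0)$, which yields uniform convergence on compacts with no equicontinuity hypothesis. The chain is: Wijsman's theorem (Lemma \ref{klconv}) converts uniform convergence of $H_i$ into Painlev\'e--Kuratowski convergence $E_{L_i}\to E_{L}$ along sequences; the balls $G_i$ converge in Hausdorff distance since their centers $\omega_i a_i$ and radii $2d(\omega_i a_i,E_{L_i})$ converge; the localized sets $\Phi_i=E_{L_i}\cap G_i$ lie in a common compact ball $\B_C$, so Kuratowski and Hausdorff convergence agree there (Lemma \ref{zmkh}); and the Steiner selection is Lipschitz in the Hausdorff metric (Lemma \ref{lem-scmh}). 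One step you did not anticipate at all is the stability of the intersection: Kuratowski limits do not commute with intersections in general, and the paper invokes the convexity result \cite[Thm.\ 4.32]{R-W} (Lemma \ref{thmozkp}), which requires $E_{L_0}(t,x_0)\cap\I G_0\not=\emptyset$, handling the degenerate case $\I G_0=\emptyset$ (where $G_0$ collapses to the single point $\{\omega_0(t,x_0)a_0\}\in E_{L_0}(t,x_0)$) by a separate direct argument using $d(\omega_i a_i,E_{L_i})\to 0$. If you replace your subsequence-plus-identification scheme by this direct continuity argument --- which your closing paragraph essentially gestures at --- you recover the paper's proof; as it stands, the proposal's mechanism does not go through.
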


The proofs of Theorems  \ref{thm-rep-stab2} and \ref{thm-rep-stab4} are given in Section \ref{thms-stab}.

The following corollary is a consequence of Theorem \ref{thm-rep-stab2} and Gronwall's Lemma.

\begin{Cor}\label{cor-rep-stab2}
Let $H_i,H:[0,T]\!\times\!\R^n\!\times\!\R^n\to\R$, $i\in\N$, satisfy \te{(H1)$-$(H4)}, \te{(HLC)}. Assume that $L_i$, $L$, $i\in\N$, are given by \eqref{tran1} and satisfy \te{(BLC)}. Let $H_i,\lambda_i,k_{R\,i},c_i,g_i$, $i\in\N$, be continuous functions and converge uniformly on compacts to $H,\lambda,k_R,c,g$, respectively.  We consider the representations $(\B,f_i,l_i)$ and $(\B,f,l)$ of $H_i$ and $H$, respectively,  defined as in the proof of Theorem~\ref{th-rprez-glo12}. If $V_i$ and $V$ are the value functions associated with $(\B,f_i,l_i,g_i)$ and $(\B,f,l,g)$, respectively, then $V_i$ converge uniformly on compacts to $V$ in $[0,T]\times\R^n$.
\end{Cor}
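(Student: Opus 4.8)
The plan is to reduce the statement to a convergence of cost functionals, after which Theorem~\ref{thm-rep-stab2} supplies the essential analytic input: it gives $f_i\to f$ and $l_i\to l$ uniformly on compact subsets of $[0,T]\times\R^n\times\B$. Both control systems use the same compact control set $A=\B$, so for a measurable control $a(\cdot):[t_0,T]\to\B$ I write $x_i^a(\cdot)$ and $x^a(\cdot)$ for the unique trajectories issuing from $x_0$ at $t_0$ under $f_i$ and $f$ (existence and uniqueness follow from the Lipschitz bound (A1) and the sublinear growth (A2)), and set
\begin{equation*}
J_i(t_0,x_0,a):=g_i(x_i^a(T))+\int_{t_0}^Tl_i(s,x_i^a(s),a(s))\,ds,
\end{equation*}
with $J(t_0,x_0,a)$ defined analogously from $(f,l,g)$. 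Then $V_i(t_0,x_0)=\inf_aJ_i(t_0,x_0,a)$ and $V(t_0,x_0)=\inf_aJ(t_0,x_0,a)$, and since $|\inf_aJ_i-\inf_aJ|\leq\sup_a|J_i-J|$, it suffices to show that $\sup_a|J_i(t_0,x_0,a)-J(t_0,x_0,a)|\to0$ uniformly for $(t_0,x_0)$ in a compact set.

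First I would fix $(t_0,x_0)\in[0,T]\times\B_{R_0}$. As $c_i\to c$ uniformly on the compact interval $[0,T]$, the maps $c_i$ are uniformly bounded, so (A2) provides a common sublinear growth; Gronwall's Lemma then keeps every trajectory $x_i^a(\cdot)$ and $x^a(\cdot)$ — for all $i$ and all controls $a$ — inside a single ball $\B_R$ on $[t_0,T]$. On $\B_R$ the joint Lipschitz constant of $f_i$ and $l_i$ from (A1) equals $k_i(t)=10(n+1)(\omega_{R,i}(t)+3(1+R)k_{R,i}(t)+1)$ with $\omega_{R,i}(t)=|\lambda_i(t,0)|+|H_i(t,0,0)|+c_i(t)(2+R)$; because $\lambda_i(\cdot,0),H_i(\cdot,0,0),c_i,k_{R,i}$ converge uniformly on $[0,T]$, the constants $k_i$ are uniformly bounded in $i$, say by $M$.

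The heart of the argument is the trajectory comparison. From $x_i^a(t)-x^a(t)=\int_{t_0}^t[f_i(s,x_i^a(s),a(s))-f(s,x^a(s),a(s))]\,ds$, inserting $f_i(s,x^a(s),a(s))$ and using (A1) bounds the first difference by $k_i(s)|x_i^a(s)-x^a(s)|$, while the second is at most $\varepsilon_i:=\sup_{[0,T]\times\B_R\times\B}|f_i-f|$, which tends to $0$ by Theorem~\ref{thm-rep-stab2}. Gronwall's Lemma yields $\sup_{t\in[t_0,T]}|x_i^a(t)-x^a(t)|\leq T\varepsilon_i\,e^{MT}$, a bound independent of $a$ and of $(t_0,x_0)$. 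Splitting $J_i-J$ into a terminal term $g_i(x_i^a(T))-g(x^a(T))$ and a running term, the terminal term is controlled by the uniform convergence $g_i\to g$ and the uniform continuity of $g$ on $\B_R$, and the running term by the uniform convergence $l_i\to l$ on $[0,T]\times\B_R\times\B$ together with the $x$-Lipschitz continuity of $l$ from (A1). Both tend to $0$ uniformly in $a$ and $(t_0,x_0)$, so $\sup_a|J_i-J|\to0$ and hence $V_i\to V$ uniformly on compacts.

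The step demanding the most care is the uniformity in $i$ and in the control $a$: one must verify that the a~priori confinement radius $R$, the Lipschitz bound $M$, and the Gronwall factor $e^{MT}$ are all independent of $i$ and $a$. This is exactly where the assumed uniform-on-compacts convergence of $c_i,k_{R,i},\lambda_i,H_i$ is used; once it is in place, the remaining estimates are routine and the conclusion follows.
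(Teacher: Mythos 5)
Your proposal is correct and follows exactly the route the paper indicates for this corollary, which is stated there as ``a consequence of Theorem~\ref{thm-rep-stab2} and Gronwall's Lemma'': you invoke Theorem~\ref{thm-rep-stab2} for the uniform-on-compacts convergence $f_i\to f$, $l_i\to l$, use the hypotheses on $c_i,k_{R\,i},\lambda_i,H_i$ to extract an $i$-independent confinement radius from (A2) and an $i$-independent Lipschitz bound from (A1), and conclude via a Gronwall trajectory comparison and the elementary estimate $|\inf_a J_i-\inf_a J|\leq\sup_a|J_i-J|$. Your filled-in details (uniformity in $i$, in the control, and in the initial point) are precisely the ones the paper leaves implicit, and they check out.
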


\begin{Def}
A sequence of functions $\{\varphi_i\}_{i\in\N}$, is said to \it{epi-converge} to  function $\varphi$ (e-$\lim_{i\to\infty}\varphi_i=\varphi$ for short) if, for every point $x\in\R^n$,
\begin{enumerate}
\item[\bf{(i)}] $\liminf_{i\to\infty}\varphi_i(x_i)\geq\varphi(x)$ for every sequence $x_i\to x$,
\item[\bf{(ii)}] $\limsup_{i\to\infty}\varphi_i(x_i)\leq\varphi(x)$ for some sequence $x_i\to x$.
\end{enumerate}
\end{Def}

The following corollary is a consequence of Theorem \ref{thm-rep-stab4} and Gronwall's Lemma.

\begin{Cor}\label{cor-rep-stab4}
Let $H_i,H:[0,T]\!\times\!\R^n\!\times\!\R^n\to\R$, $i\in\N$, satisfy \te{(H1)$-$(H4)}, \te{(HLC)}  with the same  integrable functions $c(\cdot)$, $k_R(\cdot)$. Assume that $L_i$, $L$, $i\in\N$, are given by \eqref{tran1} and satisfy \te{(BLC)} with the same  integrable function $k_R(\cdot)$. Let $g_i$, $g$, $i\in\N$, be proper, lower semicontinuous and \tn{e-$\lim_{i\to\infty}g_i=g$}.  Assume that there exists an integrable function $\mu(\cdot)$ such that $|H_i(t,0,0)|\leq\mu(t)$ and $|\lambda_i(t,0)|\leq\mu(t)$ for all $t\in[0,T]$, $i\in\N$.   We consider the representations $(\B,f_i,l_i)$ and $(\B,f,l)$ of $H_i$ and $H$, respectively, defined as in the proof of Theorem~\ref{th-rprez-glo12}. Assume that $V_i$ and $V$ are the value functions associated with $(\B,f_i,l_i,g_i)$ and $(\B,f,l,g)$, respectively. 
If $H_i(t,\cdot,\cdot)$ converge to $H(t,\cdot,\cdot)$ and $\lambda_i(t,\cdot)$ converge to $\lambda(t,\cdot)$ uniformly on compacts for all $t\in[0,T]$, then \tn{e-$\lim_{i\to\infty}V_i=V$}.
\end{Cor}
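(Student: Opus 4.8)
The plan is to deduce the epigraph-convergence of the value functions from the uniform convergence of the representations supplied by Theorem \ref{thm-rep-stab4}, combined with Gronwall-type trajectory estimates. First I would apply Theorem \ref{thm-rep-stab4}: since $c_i=c$ and $k_{R\,i}=k_R$ are shared and $H_i(t,\cdot,\cdot)\to H(t,\cdot,\cdot)$, $\lambda_i(t,\cdot)\to\lambda(t,\cdot)$ uniformly on compacts for each $t$, one obtains $f_i(t,\cdot,\cdot)\to f(t,\cdot,\cdot)$ and $l_i(t,\cdot,\cdot)\to l(t,\cdot,\cdot)$ uniformly on compacts for every $t\in[0,T]$. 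The integrable bound $\mu(\cdot)$ on $|H_i(t,0,0)|$ and $|\lambda_i(t,0)|$ makes the Lipschitz moduli $\omega_{R\,i}(t)$ of (A1) dominated by a fixed integrable function, which, together with the growth bound (A2) $|f_i(t,x,a)|\le c(t)(1+|x|)$, gives uniform a priori bounds and equi-absolute-continuity of all trajectories issued from a bounded set and, via Gronwall's Lemma, that for a fixed control $a(\cdot)\in\B$ the solutions of $\dot x_i=f_i(t,x_i,a)$ converge uniformly to the solution of $\dot x=f(t,x,a)$ with $\int l_i(t,x_i,a)\,dt\to\int l(t,x,a)\,dt$; the same holds when integrating backward from a prescribed terminal value.

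For the limsup part of epi-convergence I would fix $(t_0,x_0)$, take an $\varepsilon$-optimal pair $(\bar x,\bar a)(\cdot)\in S_f(t_0,x_0)$ for $V(t_0,x_0)$, and use the recovery sequence $z_i\to\bar x(T)$ with $\limsup_i g_i(z_i)\le g(\bar x(T))$ furnished by e-$\lim_{i}g_i=g$. Rather than steering to $z_i$ (which would require a controllability argument), I would integrate $\dot x_i=f_i(t,x_i,\bar a)$ \emph{backward} from the terminal condition $x_i(T)=z_i$; Gronwall's estimate then gives $x_i\to\bar x$ uniformly, hence $x_{0,i}:=x_i(t_0)\to x_0$. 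Feeding the admissible pair $(x_i,\bar a)$ into $V_i$ yields $V_i(t_0,x_{0,i})\le g_i(z_i)+\int_{t_0}^Tl_i(t,x_i,\bar a)\,dt$, whose right-hand side tends to $g(\bar x(T))+\int_{t_0}^Tl(t,\bar x,\bar a)\,dt\le V(t_0,x_0)+\varepsilon$. A diagonal argument over $\varepsilon\downarrow 0$ then produces a single recovery sequence $x_{0,i}\to x_0$ with $\limsup_iV_i(t_0,x_{0,i})\le V(t_0,x_0)$.

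For the liminf part I would take an arbitrary sequence $(t_{0,i},x_{0,i})\to(t_0,x_0)$ and $\tfrac1i$-optimal pairs $(x_i,a_i)(\cdot)$ for $V_i(t_{0,i},x_{0,i})$. The a priori bounds and the equi-integrability forced by (A2) let me extract, by Arzel\`a--Ascoli and the Dunford--Pettis theorem, a subsequence with $x_i\to\bar x$ uniformly and $\dot x_i\rightharpoonup\dot{\bar x}$ weakly in $L^1$, with $\bar x(t_0)=x_0$. By the epigraphical character of the representation, $l_i(t,x_i,a_i)\ge L_i(t,x_i,f_i(t,x_i,a_i))=L_i(t,x_i,\dot x_i)$, so it suffices to pass to the liminf in the integral functional of $L_i$. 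Since $H_i(t,x,\cdot)\to H(t,x,\cdot)$, continuity of the Legendre--Fenchel transform under epi-convergence (cf. \cite{R-W}) gives epi-convergence of $L_i(t,x,\cdot)=H_i^\ast(t,x,\cdot)$ to $L(t,x,\cdot)$; together with the Hausdorff--Lipschitz continuity of $x\mapsto E_{L}(t,x)$ from Corollary \ref{hlc-cor-ner} and convexity in the velocity (L3), a lower-semicontinuity theorem for integral functionals with epi-convergent integrands gives $\liminf_i\int L_i(t,x_i,\dot x_i)\,dt\ge\int L(t,\bar x,\dot{\bar x})\,dt$, while the liminf part of e-$\lim_{i}g_i=g$ gives $\liminf_i g_i(x_i(T))\ge g(\bar x(T))$. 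A measurable selection based on (A3) realizes $\bar x$ as the trajectory of an admissible control pair whose control cost equals $\int L(t,\bar x,\dot{\bar x})\,dt$, whence $\liminf_iV_i(t_{0,i},x_{0,i})\ge g(\bar x(T))+\int_{t_0}^TL(t,\bar x,\dot{\bar x})\,dt\ge V(t_0,x_0)$.

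The trajectory stability and the limsup inequality are the routine ingredients, following directly from Theorem \ref{thm-rep-stab4} and Gronwall's Lemma. The main obstacle is the liminf step: one must pass from the pointwise-in-$x$, uniform-on-compacts convergence of the Hamiltonians to the joint epi-convergence $L_i\to L$ in $(x,v)$, and then invoke the correct lower-semicontinuity theorem for integral functionals with \emph{moving} integrands along weakly convergent velocities, together with the closure property that forces $\dot{\bar x}(t)\in\D H^\ast(t,\bar x(t),\cdot)$ almost everywhere. Convexity (L3), compactness of $\B$, and the uniform Lipschitz and growth bounds are precisely what make these passages legitimate.
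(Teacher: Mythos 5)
Your overall skeleton is the one the paper intends: the paper gives no detailed proof of this corollary, stating only that it is a consequence of Theorem \ref{thm-rep-stab4} and Gronwall's Lemma, and your limsup half implements exactly that. In particular, you correctly identify where the hypothesis on $\mu(\cdot)$ enters: it dominates the (A1) moduli $\omega_{R\,i}(t)=|\lambda_i(t,0)|+|H_i(t,0,0)|+c(t)(2+R)$ by a single integrable function independent of $i$, so that the pointwise-in-$t$, uniform-on-compacts convergence $f_i(t,\cdot,\cdot)\to f(t,\cdot,\cdot)$, $l_i(t,\cdot,\cdot)\to l(t,\cdot,\cdot)$ from Theorem \ref{thm-rep-stab4} upgrades, by dominated convergence, to $\int_0^T\sup_K|f_i-f|\,dt\to 0$ and likewise for $l_i$; your backward-shooting construction of the recovery sequence (integrating $\dot x_i=f_i(t,x_i,\bar a)$ from $x_i(T)=z_i$, avoiding any controllability argument) together with Gronwall and a diagonal over $\varepsilon$ is sound.

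The genuine gap is in your liminf step. You reduce to $\liminf_i\int L_i(t,x_i(t),\dot x_i(t))\,dt\ge\int L(t,\bar x(t),\dot{\bar x}(t))\,dt$ and then appeal to ``a lower-semicontinuity theorem for integral functionals with epi-convergent integrands'' --- a tool you name as the main obstacle but never supply. As stated it would not suffice: pointwise (in $(t,x)$) epi-convergence of the convex integrands $L_i(t,x,\cdot)=H_i^{\ast}(t,x,\cdot)$, even with (L3), does not by itself give lower semicontinuity along $x_i\to\bar x$ uniformly and $\dot x_i\rightharpoonup\dot{\bar x}$ in $L^1$; to make it work you would have to assemble the equi-Lipschitz transfer of epigraphs via (ELC) with the common $k_R(\cdot)$, the set convergence $E_{L_i}(t,x_i(t))\to E_L(t,\bar x(t))$ (Lemma \ref{klconv} applied at fixed $t$), equi-integrability of the costs $\eta_i(t)=l_i(t,x_i(t),a_i(t))$ (which follows from the $\mu$-domination of $\omega_i$ along the trajectory tube, allowing Dunford--Pettis), and a lower-closure argument for the closed convex sets $E_{L_i}$ --- none of which appears in your sketch. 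The cleaner route, and the one consistent with the paper's stated ingredients, is to notice that your liminf argument never uses the convergence $f_i\to f$, $l_i\to l$ that Theorem \ref{thm-rep-stab4} provides. Use it: given $\tfrac1i$-optimal pairs $(x_i,a_i)$ for $V_i(t_{0,i},x_{0,i})$, let $y_i$ solve the \emph{fixed} system $\dot y_i=f(t,y_i,a_i(t))$, $y_i(t_{0,i})=x_{0,i}$; Gronwall with $\int_0^T\sup_K|f_i-f|\,dt\to 0$ gives $\|x_i-y_i\|\to 0$ and $\int|l_i(t,x_i,a_i)-l(t,y_i,a_i)|\,dt\to 0$. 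You are then reduced to the single representation $(\B,f,l)$, where $l(t,y_i,a_i)\ge L(t,y_i,\dot y_i)$ by Lemma \ref{lem-tran-r1}, and you can conclude exactly as in the paper's Theorems \ref{roz6-thm2} and \ref{roz6-thm1}: Arzel\`a--Ascoli and Dunford--Pettis, Rockafellar's Semicontinuity Theorem with the \emph{fixed} integrand $L$, the (A3)-based measurable selection to realize the limit arc as an admissible pair, and the liminf half of \tn{e-$\lim_{i\to\infty}g_i=g$} at $x_i(T)$. This closes the step with no moving-integrand machinery at all.
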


\subsection{Reduction a variational problem to an optimal control problem}\label{rvptoocp}
The indicator function $\psi_S(\cdot)$  of a set $S$ is given by $0$ on this set but $+\infty$ outside.
Let $\mathcal{A}([0,1],\R^n)$ be the space of all absolutely continuous functions.

 We consider the following generalized variational problem:
\begin{equation}
\begin{aligned}
\mathrm{minimize}&\;\;\;\Gamma[x(\cdot)]:=\phi(x(0),x(1))+\int_{0}^1L(t,x(t),\dot{x}(t))\,dt,\\[-1mm]
\mathrm{subject\;\, to}&\;\;\;x(\cdot)\in \mathcal{A}([0,1],\R^n).
\end{aligned}\tag{$\mathcal{P}_{v}$}
\end{equation}

We consider the following  optimal control problem: 
\begin{equation}
\begin{aligned}
\mathrm{minimize}&\;\;\;\Lambda[(x,a)(\cdot)]:=\phi(x(0),x(1))+\int_{0}^1l(t,x(t),a(t))\,dt,\\[-0.5mm]
\mathrm{subject\;\, to}&\;\;\;\dot{x}(t)=f(t,x(t),a(t)),\;\; a(t)\in\B\;\;\;\mathrm{a.e.}
\;\;t\in[0,1],\\
\mathrm{and}&\;\;\; x(\cdot)\in \mathcal{A}([0,1],\R^n),\;a(\cdot)\in L^1([0,1],\R^{n+1}).
\end{aligned}\tag{$\mathcal{P}_{c}$}
\end{equation}

\begin{Th}\label{thm-reduct}
Assume that \te{(H1)$-$(H4)}, \te{(HLC)}, \te{(BLC)} hold with integrable functions $c(\cdot)$, $k_R(\cdot)$, $H(\cdot,0,0)$, $\lambda(\cdot,0)$.  We consider the representation $(\B,f,l)$ of $H$ defined as in Theorem~\ref{th-rprez-glo12}. Assume further that $\phi$ is a proper, lower semicontinuous function  and  there exists $M\geq 0$ such that $\min\{\,|z|,|x|\,\}\leq M$ for all $(z,x)\in\D\phi$. Then
\begin{equation*}
\min\Gamma[x(\cdot)]\,=\,\min\Lambda[(x,a)(\cdot)].
\end{equation*}
Besides, if $\kr{x}(\cdot)$ is the optimal arc of $(\mathcal{P}_{v})$  such that $\kr{x}(\cdot)\in\D\Gamma$, then there exists  $\kr{a}(\cdot)$ such that $(\kr{x},\kr{a})(\cdot)$ is the optimal arc of $(\mathcal{P}_{c})$ and $(\kr{x},\kr{a})(\cdot)\in\D\Lambda$.  Conversely, if $(\kr{x},\kr{a})(\cdot)$ is the optimal arc of $(\mathcal{P}_{c})$, then $\kr{x}(\cdot)$ is the optimal arc of $(\mathcal{P}_{v})$.
\end{Th}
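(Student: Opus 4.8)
The plan is to exploit the epigraphical character of the representation $(\B,f,l)$ produced by Theorem~\ref{th-rprez-glo12}. Two pointwise relations form the backbone. First, from the representation formula one has, for every $a\in\B$, that $l(t,x,a)\ge\langle p,f(t,x,a)\rangle-H(t,x,p)$ for all $p$; taking the supremum over $p$ gives $L(t,x,f(t,x,a))\le l(t,x,a)$, i.e. $(f,l)(t,x,\B)\subset\E L(t,x,\cdot)$. Second, property \te{(A3)} states $\G H^{\ast}(t,x,\cdot)\subset\bigcup_{a\in\B}(f(t,x,a),l(t,x,a))$, so for every $v\in\D L(t,x,\cdot)$ there is $a\in\B$ with $f(t,x,a)=v$ and $l(t,x,a)=L(t,x,v)$. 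The easy inequality follows immediately: if $(x,a)(\cdot)$ is admissible for $(\mathcal{P}_{c})$ then $\dot x(t)=f(t,x(t),a(t))\in\D L(t,x(t),\cdot)$ and $L(t,x(t),\dot x(t))\le l(t,x(t),a(t))$ a.e., whence $\Gamma[x(\cdot)]\le\Lambda[(x,a)(\cdot)]$ and therefore $\inf\Gamma\le\inf\Lambda$.

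For the reverse inequality I would start from any arc $x(\cdot)\in\D\Gamma$. Then $\int_0^1 L(t,x(t),\dot x(t))\,dt<\infty$, so outside a null set $N$ one has $\dot x(t)\in\D L(t,x(t),\cdot)$. On $[0,1]\setminus N$ define
\[
\Phi(t):=\{\,a\in\B\;:\;f(t,x(t),a)=\dot x(t),\ l(t,x(t),a)\le L(t,x(t),\dot x(t))\,\},
\]
which is nonempty by \te{(A3)} and, since the opposite inequality always holds, actually forces $l(t,x(t),a)=L(t,x(t),\dot x(t))$. As $f,l$ are Carath\'eodory and $t\mapsto(x(t),\dot x(t))$ and $t\mapsto L(t,x(t),\dot x(t))$ are measurable (the latter by \te{(L1)}), $\Phi$ has measurable graph and compact nonempty values, so a measurable selection theorem yields a measurable $a(\cdot)$ with $a(t)\in\Phi(t)$ a.e. Setting $a\equiv 0$ on $N$ gives an admissible control pair (the control is bounded, hence in $L^1$) with $\dot x=f(t,x,a)$ a.e. and $l(t,x,a)=L(t,x,\dot x)$ almost everywhere, so $\Lambda[(x,a)(\cdot)]=\Gamma[x(\cdot)]$. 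Hence $\inf\Lambda\le\inf\Gamma$, and the two infima coincide.

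It remains to produce minimizers. Here the hypothesis $\min\{|z|,|x|\}\le M$ on $\D\phi$ is exactly what feeds the direct method: for any arc with $\Gamma[x(\cdot)]<\infty$ one has $(x(0),x(1))\in\D\phi$, so $|x(0)|\le M$ or $|x(1)|\le M$, while \te{(E5)} gives $|\dot x(t)|\le c(t)(1+|x(t)|)$ a.e. Gronwall's inequality (integrating from the controlled endpoint) then bounds $\|x\|_\infty$ uniformly along a minimizing sequence, and the integrability of $c(\cdot)$ makes the derivatives dominated, hence uniformly integrable. Passing to a subsequence, $x_k\to\kr x$ uniformly and $\dot x_k\rightharpoonup\dot{\kr x}$ weakly in $L^1$; lower semicontinuity of the integral functional (using that $L$ is a normal integrand by \te{(L1)}, \te{(L2)}, convex in $v$ by \te{(L3)}) together with lower semicontinuity of $\phi$ and convergence of the endpoints gives $\Gamma[\kr x(\cdot)]\le\liminf\Gamma[x_k(\cdot)]=\inf\Gamma$, so $\kr x(\cdot)$ is optimal and $\min\Gamma$ is attained. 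The correspondence then closes everything: applying the selection step to an optimal $\kr x(\cdot)\in\D\Gamma$ produces $\kr a(\cdot)$ with $\Lambda[(\kr x,\kr a)(\cdot)]=\Gamma[\kr x(\cdot)]=\inf\Gamma=\inf\Lambda$, so $(\kr x,\kr a)(\cdot)$ is optimal for $(\mathcal{P}_{c})$; conversely, if $(\kr x,\kr a)(\cdot)$ is optimal for $(\mathcal{P}_{c})$, the easy inequality gives $\Gamma[\kr x(\cdot)]\le\Lambda[(\kr x,\kr a)(\cdot)]=\min\Lambda=\min\Gamma$, forcing $\kr x(\cdot)$ to be optimal for $(\mathcal{P}_{v})$.

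The main obstacle is the measurable selection in the second paragraph: one must verify with care that $\Phi$ is a measurable, closed-valued map so that a \emph{measurable} control is extracted, and handle the null set where $\dot x$ leaves the effective domain. A secondary difficulty is the compactness/lower-semicontinuity argument for existence, where the boundedness of $\D L(t,x,\cdot)$ from \te{(E5)} and the one-sided endpoint bound coming from $\phi$ must be combined through Gronwall to compactify the minimizing sequence before invoking weak lower semicontinuity of the Lagrangian functional.
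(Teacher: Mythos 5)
Your proposal is correct and follows essentially the same route as the paper: the inequality $L(t,x,f(t,x,a))\leq l(t,x,a)$ for one direction, a measurable selection from \te{(A3)} (the paper uses the auxiliary arc $\kr{u}(t)=\int_0^t L(s,\kr{x}(s),\dot{\kr{x}}(s))\,ds$ and Filippov's theorem, which is your map $\Phi$ in disguise) for the other, and then the direct method with the endpoint bound from $\D\phi$, Gronwall, Arzel\`a--Ascoli, Dunford--Pettis and Rockafellar's semicontinuity theorem to produce the minimizer of $(\mathcal{P}_{v})$, transferred to $(\mathcal{P}_{c})$ by the same selection. The only cosmetic difference is that you derive $L\leq l$ directly from the Fenchel inequality rather than via the paper's separation argument (Lemma \ref{lem-tran-r1}), which is a harmless simplification.
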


The proof of Theorem  \ref{thm-reduct} is given in Section \ref{thm-reduct-sect}.

  Applying Theorem \ref{thm-reduct} to $\phi(z,x):=\psi_{\{x_0\}}(z)+g(x)$, we obtain the following corollary:

\begin{Cor}\label{cor-reduct}
Assume that \te{(H1)$-$(H4)}, \te{(HLC)}, \te{(BLC)} hold with integrable functions $c(\cdot)$, $k_R(\cdot)$, $H(\cdot,0,0)$, $\lambda(\cdot,0)$.  We consider the representation $(\B,f,l)$ of $H$ defined as in  Theorem~\ref{th-rprez-glo12}. Assume further that $g$ is a proper, lower semicontinuous function. If $V$ is the value function associated with $(\B,f,l,g)$, then for all $(t_0,x_0)\in[0,T]\times\R^n$
\begin{eqnarray*}
V(t_0,x_0) &=& \min_{\begin{array}{c}
\scriptstyle x(\cdot)\,\in\,\mathcal{A}([t_0,T],\R^n)\\[-1mm]
\scriptstyle x(t_0)=x_0
\end{array}}\!\!\big\{\,g(x(T))+\int_{t_0}^TL(t,x(t),\dot{x}(t))\,dt\,\big\}\\
&=& \min_{(x,a)(\cdot)\,\in\, \emph{S}_f(t_0,x_0)}\,\big\{\,g(x(T))+\int_{t_0}^Tl(t,x(t),a(t))\,dt\,\big\}.
\end{eqnarray*}
\end{Cor}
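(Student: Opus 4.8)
The plan is to read the corollary off from Theorem~\ref{thm-reduct} by making the endpoint cost $\phi$ announced just above the statement. Fix $(t_0,x_0)\in[0,T]\times\R^n$ and put $\phi(z,x):=\psi_{\{x_0\}}(z)+g(x)$ for $(z,x)\in\R^n\times\R^n$. First I would check that this $\phi$ satisfies the three standing hypotheses on the endpoint cost in Theorem~\ref{thm-reduct}. It is proper: since $g$ is proper there is $w$ with $g(w)<+\infty$, hence $\phi(x_0,w)=g(w)<+\infty$, and $\phi$ never takes the value $-\infty$ because neither summand does. It is lower semicontinuous, being the sum of two lower semicontinuous, $\R\cup\{+\infty\}$-valued functions (the indicator $\psi_{\{x_0\}}$ of the closed singleton $\{x_0\}$, and $g$). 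The decisive point is the boundedness condition on the effective domain: here $\D\phi=\{x_0\}\times\D g$, so every $(z,x)\in\D\phi$ has $z=x_0$ and therefore $\min\{|z|,|x|\}\le|z|=|x_0|$; thus the hypothesis holds with $M:=|x_0|$.

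With $\phi$ qualified, I would apply Theorem~\ref{thm-reduct} on the interval $[t_0,T]$ rather than $[0,1]$ (the theorem is phrased on $[0,1]$ only for notational economy, and the same argument applies verbatim on any compact time interval). This produces the equality $\min\Gamma=\min\Lambda$, with both minima attained, for the functionals
\[
\Gamma[x(\cdot)]=\psi_{\{x_0\}}(x(t_0))+g(x(T))+\int_{t_0}^TL(t,x(t),\dot{x}(t))\,dt
\]
and
\[
\Lambda[(x,a)(\cdot)]=\psi_{\{x_0\}}(x(t_0))+g(x(T))+\int_{t_0}^Tl(t,x(t),a(t))\,dt .
\]

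To reach the displayed chain, I would then unwind the indicator. Because $\psi_{\{x_0\}}(x(t_0))$ equals $0$ when $x(t_0)=x_0$ and $+\infty$ otherwise, minimizing $\Gamma$ over all of $\mathcal{A}([t_0,T],\R^n)$ coincides with minimizing $g(x(T))+\int_{t_0}^TL(t,x(t),\dot{x}(t))\,dt$ subject to $x(t_0)=x_0$, which is the first right-hand minimum. Likewise, minimizing $\Lambda$ over pairs satisfying $\dot{x}=f(t,x,a)$, $a(t)\in\B$ and $x(t_0)=x_0$ is minimization of $g(x(T))+\int_{t_0}^Tl(t,x(t),a(t))\,dt$ over $S_f(t_0,x_0)$, the second right-hand minimum; hence $\min\Gamma=\min\Lambda$ delivers their equality. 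Finally, the latter expression is exactly the value $V(t_0,x_0)$ of the Bolza problem associated with $(\B,f,l,g)$, defined in the introduction as an infimum; the attainment furnished by Theorem~\ref{thm-reduct} turns that infimum into a minimum, so $V(t_0,x_0)=\min\Lambda$ and the whole chain is established.

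The only steps requiring genuine care, beyond unwinding definitions, are the verification that the degenerate cost $\phi$, whose first argument is pinned to $x_0$, still satisfies the boundedness hypothesis $\min\{|z|,|x|\}\le M$ (this is precisely what fixing the initial point provides), and the observation that Theorem~\ref{thm-reduct} transfers unchanged from $[0,1]$ to the interval $[t_0,T]$ appearing in the value function. Everything else is routine bookkeeping.
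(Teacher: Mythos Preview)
Your proposal is correct and follows exactly the approach indicated in the paper: apply Theorem~\ref{thm-reduct} with the endpoint cost $\phi(z,x)=\psi_{\{x_0\}}(z)+g(x)$, verify its hypotheses (in particular the boundedness condition via $M=|x_0|$), and unwind the indicator to recover the initial constraint. The paper merely states this choice of $\phi$ without spelling out the verification, so your write-up is a faithful and more detailed version of the intended argument.
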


\begin{Rem}
Using  Corollary \ref{cor-reduct} we can prove that if  $g$ is locally Lipschitz  continuous/continuous/lower semicontinuous, so is $V$.
\end{Rem}


\section{Proof of Theorem \ref{podr22_th_wk}}\label{wk-kon-istr}

\noindent At the beginning we prove three lemmas which will be used in the proof of
Theorem \ref{podr22_th_wk}.

\begin{Lem}\label{lem-tran-r1}
Assume that  $p\rightarrow H(t,x,p)$ is a real-valued convex function. If the triple $(A,f,l)$ is a representation of $H$ with a nonempty set $A$, then $L(t,x,\cdot\,):=H^{\ast}(t,x,\cdot\,)$ satisfies the inequality $L(t,x,f(t,x,a))\leq l(t,x,a)$ for all $a\in A$.
\end{Lem}

\vspace*{-4mm}
\pagebreak

\begin{proof}
We assume, by contradiction, that the claim is false. Then there exists $\kr{a}\in A$ such that  $l(t,x,\kr{a})<L(t,x,f(t,x,\kr{a}))$. Therefore $(f(t,x,\kr{a}), l(t,x,\kr{a}))\not\in\E L(t,x,\cdot)$. Because $p\rightarrow H(t,x,p)$ is finite and convex, the function $v\rightarrow L(t,x,v)$ is proper, convex, lower semicontinuous and  $H(t,x,\cdot\,)=L^{\ast}(t,x,\cdot\,)$. Hence the set $\E L(t,x,\cdot)$ is nonempty, closed and convex. By Epigraph Separation Theorem, there exists $\kr{p}\in\R^n$ such that
\begin{equation}\label{stod-dlae}
\sup_{(v,\eta)\in\E L(t,x,\cdot)} \langle\, (v,\eta),(\kr{p},-1)\,\rangle \;<\; \langle\, (f(t,x,\kr{a}), l(t,x,\kr{a})),(\kr{p},-1)\,\rangle.
\end{equation}
We note that $(v,L(t,x,v))\in\E L(t,x,\cdot)$ for all $v\in\D L(t,x,\cdot)$. So, by the inequality
 (\ref{stod-dlae}) and the equality $H(t,x,\cdot\,)=L^{\ast}(t,x,\cdot\,)$ we obtain
\begin{eqnarray*}
H(t,x,\kr{p}) &=&\sup_{v\,\in\,\D L(t,x,\cdot)}\langle (v,L(t,x,v)),(\kr{p},-1)\rangle\\ 
&\leq & \sup_{(v,\eta)\,\in\,\E L(t,x,\cdot)} \langle\, (v,\eta),(\kr{p},-1)\,\rangle\\
&<& \langle\, (f(t,x,\kr{a}), l(t,x,\kr{a})),(\kr{p},-1)\,\rangle\\
&\leq & H(t,x,\kr{p}).
\end{eqnarray*}
Thus, we have a contradiction, that completes the proof.
\end{proof}

\begin{Lem}\label{lem-tran-r2}
Assume that the set $A$ is nonempty and compact. Let $a\to f(t,x,a)$ and $a\to l(t,x,a)$ be continuous functions and the set $f(t,x,A)$ be convex. If the triple $(A,f,l)$ is a representation of $H$, then
$f(t,x,A)=\D L(t,x,\cdot)$, where $L(t,x,\cdot\,):=H^{\ast}(t,x,\cdot\,)$.
\end{Lem}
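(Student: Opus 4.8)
The plan is to prove the two inclusions $f(t,x,A)\subset\D L(t,x,\cdot)$ and $\D L(t,x,\cdot)\subset f(t,x,A)$ separately, at a fixed $(t,x)$. First I would record that, since $A$ is compact and $a\to f(t,x,a)$, $a\to l(t,x,a)$ are continuous, the representation formula $H(t,x,p)=\sup_{a\in A}\{\langle p,f(t,x,a)\rangle-l(t,x,a)\}$ exhibits $p\to H(t,x,p)$ as a supremum over the compact set $A$ of functions affine in $p$ with coefficients continuous in $a$; hence it is finite-valued and convex. In particular the hypotheses of Lemma~\ref{lem-tran-r1} are satisfied, and $L(t,x,\cdot)=H^{\ast}(t,x,\cdot)$ is proper, convex and lower semicontinuous.

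For the first (easy) inclusion, Lemma~\ref{lem-tran-r1} gives $L(t,x,f(t,x,a))\leq l(t,x,a)<+\infty$ for every $a\in A$, and since $L(t,x,\cdot)$ is proper it never takes the value $-\infty$. Thus $L(t,x,f(t,x,a))$ is finite, i.e. $f(t,x,a)\in\D L(t,x,\cdot)$, and hence $f(t,x,A)\subset\D L(t,x,\cdot)$.

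The reverse inclusion $\D L(t,x,\cdot)\subset f(t,x,A)$ is where the convexity hypothesis on $f(t,x,A)$ enters, and I expect it to be the main obstacle. I would argue by contradiction: put $K:=f(t,x,A)$, which is compact as the continuous image of the compact set $A$ and convex by assumption. Suppose some $v_0\in\D L(t,x,\cdot)$ satisfies $v_0\notin K$. Strict separation of a point from a compact convex set then yields $\bar p\in\R^n$ with $m:=\max_{w\in K}\langle\bar p,w\rangle<\langle\bar p,v_0\rangle$. Evaluating the representation along the ray $s\bar p$ with $s>0$ and bounding $l(t,x,\cdot)$ below by its minimum $-C$ over the compact set $A$ gives $H(t,x,s\bar p)\leq sm+C$. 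On the other hand, the Fenchel--Young inequality yields $H(t,x,s\bar p)\geq s\langle\bar p,v_0\rangle-L(t,x,v_0)$, so together these force $s(\langle\bar p,v_0\rangle-m)\leq C+L(t,x,v_0)$ for every $s>0$. Since $\langle\bar p,v_0\rangle-m>0$ while the right-hand side is a fixed finite constant, letting $s\to+\infty$ produces a contradiction. Hence $\D L(t,x,\cdot)\subset K=f(t,x,A)$, and combining with the first inclusion gives the claimed equality.

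Beyond routine bookkeeping, the only delicate points are the appeal to strict separation (legitimate because $K$ is compact convex and $v_0\notin K$) and the verification that the constants $m$, $C$ and $L(t,x,v_0)$ are all finite, which follow respectively from compactness of $K$, continuity of $l$ on the compact set $A$, and $v_0\in\D L(t,x,\cdot)$.
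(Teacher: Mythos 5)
Your proof is correct and takes essentially the same route as the paper: the easy inclusion via Lemma~\ref{lem-tran-r1}, and the reverse inclusion by separating a point $v_0\in\D L(t,x,\cdot)$ from the compact convex set $f(t,x,A)$, bounding $l(t,x,\cdot)$ below by its minimum over the compact set $A$, and playing the representation formula against the Fenchel--Young inequality along large multiples of the separating vector. The only cosmetic difference is that you let the scaling parameter $s\to+\infty$, whereas the paper fixes a single sufficiently large integer multiple $\kr{n}$ of the separating vector and reaches the contradiction in one chain of inequalities.
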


\begin{proof}
Because $p\rightarrow H(t,x,p)$ is finite and convex, the function $v\rightarrow L(t,x,v)$ is proper, convex and lower semicontinuous. By Lemma \ref{lem-tran-r1} we have  $L(t,x,f(t,x,a))\leq l(t,x,a)$ for every  $a\in A$. Hence we obtain $f(t,x,A)\subset \D L(t,x,\cdot)$. Now we show that $\D L(t,x,\cdot)\subset f(t,x,A)$. We suppose that this inclusion is false.
Then there exists an element $\kr{v}\in \D L(t,x,\cdot)$ and $\kr{v}\not\in f(t,x,A)$. The set $f(t,x,A)$ is nonempty, convex and compact, so by the Separation Theorem,
there exist an element $\kr{p}\in\R^n$ and numbers $\alpha,\beta\in\R$ such that
\begin{equation*}
 \langle\,\kr{v},\kr{p} \,\rangle\leq \alpha<\beta\leq \langle\, f(t,x,a),\kr{p}\,\rangle,  \;\;\;\forall\;a\in A.
\end{equation*}
We notice that by the above inequality we obtain 
\begin{equation}\label{stod-dlae0101}
 \beta-\alpha\;\leq\; \langle\, f(t,x,a)-\kr{v},\kr{p}\,\rangle,  \;\;\;\forall\;a\in A.
\end{equation}
Put $\xi(t,x):=\inf_{a\in A}l(t,x,a)$. Let $\kr{n}\in N$ be large enough for the following inequality to hold
\begin{equation}\label{stod-dlae01011}
L(t,x,\kr{v})-\xi(t,x)\;<\;\kr{n}\cdot(\beta-\alpha).
\end{equation}
 Our assumptions imply that for  $\,\kr{q}:=-\kr{n}\cdot\kr{p}\,$  there exists  $\,a_{\kr{q}}\in A\,$  such that
\begin{equation}\label{stod-dlae01012}
H(t,x,\kr{q})=\langle\, \kr{q},f(t,x,a_{\kr{q}})\,\rangle-l(t,x,a_{\kr{q}}).
\end{equation} 
From  (\ref{stod-dlae01011}), (\ref{stod-dlae01012}) and (\ref{stod-dlae0101}), it follows that
\begin{eqnarray*}
\kr{n}\cdot(\beta-\alpha) 
&>& \langle\,\kr{v},\kr{q}\,\rangle - H(t,x,\kr{q})-\xi(t,x)\\
&\geq & \langle\, \kr{v}-f(t,x,a_{\kr{q}}),\kr{q}\,\rangle\\
&\geq & \kr{n}\cdot(\beta-\alpha).
\end{eqnarray*}
Thus, we obtain a contradiction, that completes the proof.
\end{proof}

\begin{Lem}\label{lem-tran-r1w1}
Assume that the set $A$ is nonempty and compact. Let  $f:[0,T]\times\R^n\times A\rightarrow\R^n$ and $l:[0,T]\times\R^n\times A\rightarrow\R$ be measurable in $t$ for all $(x,a)\in\R^n\times A$ and continuous in $(x,a)$ for all $t\in[0,T]$. If the triple $(A,f,l)$ is a representation of $H$,
then there exist a nonempty, compact set $\mathbbmtt{A}$ and functions $\mathbbmtt{f}:[0,T]\times\R^n\times \mathbbmtt{A}\rightarrow\R^n$ and $\mathbbmtt{l}:[0,T]\times\R^n\times \mathbbmtt{A}\rightarrow\R$, measurable in $t$ for all $(x,\mathbbmtt{a})\in\R^n\times \mathbbmtt{A}$ and continuous in $(x,\mathbbmtt{a})$ for all $t\in[0,T]$, such that the triple $(\mathbbmtt{A},\mathbbmtt{f},\mathbbmtt{l})$ is a representation of $H$. Moreover, for all $t\in[0,T]$, $x\in\R^n$
\begin{eqnarray}\label{rwr-lem0}
\mathbbmtt{f}(t,x,\mathbbmtt{A})=\te{conv}f(t,x,A),\qquad \mathbbmtt{l}(t,x,\mathbbmtt{A})=\te{conv}\,l(t,x,A).
\end{eqnarray}

Furthermore, if for any $R>0$ there exists a measurable map  $k_R:[0,T]\to[0,+\infty)$ such that $l(t,\cdot,a)$ is $k_R(t)$-Lipschitz on $\B_R$ for every  $t\in[0,T]$ and $a\in A$, then $\mathbbmtt{l}(t,\cdot,\mathbbmtt{a})$ is also $k_R(t)$-Lipschitz on $\B_R$ for every  $t\in[0,T]$ and $\mathbbmtt{a}\in \mathbbmtt{A}$.

Besides, if functions  $f,l$ are continuous, then functions $\mathbbmtt{f},\mathbbmtt{l}$
are also continuous.
\end{Lem}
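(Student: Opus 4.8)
The plan is to parametrize the convexification of the set $\e(t,x,A):=\{(f(t,x,a),l(t,x,a))\mid a\in A\}\subset\R^{n+1}$ explicitly, using Carath\'eodory's theorem, and then to exploit the fact that the supremum defining $H$ is unchanged when $\e(t,x,A)$ is replaced by its convex hull. Concretely, I would take the compact control set
\[ \mathbbmtt{A}:=A^{n+2}\times\Delta,\qquad \Delta:=\big\{\lambda\in\R^{n+2}\mid \lambda_i\geq 0,\;\textstyle\sum_{i=1}^{n+2}\lambda_i=1\big\}, \]
which is compact as a finite product of compact sets, and for $\mathbbmtt{a}=(a_1,\dots,a_{n+2},\lambda)\in\mathbbmtt{A}$ define
\[ \mathbbmtt{f}(t,x,\mathbbmtt{a}):=\sum_{i=1}^{n+2}\lambda_i\,f(t,x,a_i),\qquad \mathbbmtt{l}(t,x,\mathbbmtt{a}):=\sum_{i=1}^{n+2}\lambda_i\,l(t,x,a_i). \]
Then $(\mathbbmtt{f},\mathbbmtt{l})(t,x,\mathbbmtt{a})$ is precisely a convex combination of at most $n+2$ points of $\e(t,x,A)$, so by Carath\'eodory's theorem in $\R^{n+1}$ one has $(\mathbbmtt{f},\mathbbmtt{l})(t,x,\mathbbmtt{A})=\te{conv}\,\e(t,x,A)$.

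To see that $(\mathbbmtt{A},\mathbbmtt{f},\mathbbmtt{l})$ is a representation of $H$, I would use that for each fixed $p$ the map $(v,\eta)\mapsto\langle p,v\rangle-\eta$ is affine, whence its supremum over a set coincides with its supremum over the convex hull of that set. Therefore
\[ \sup_{\mathbbmtt{a}\in\mathbbmtt{A}}\{\langle p,\mathbbmtt{f}(t,x,\mathbbmtt{a})\rangle-\mathbbmtt{l}(t,x,\mathbbmtt{a})\}=\sup_{(v,\eta)\in\te{conv}\,\e(t,x,A)}\{\langle p,v\rangle-\eta\}=\sup_{a\in A}\{\langle p,f(t,x,a)\rangle-l(t,x,a)\}, \]
and the right-hand side equals $H(t,x,p)$ by assumption.

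For the image identities \eqref{rwr-lem0}, I would note that the coordinate projections $\R^{n+1}\to\R^n$ and $\R^{n+1}\to\R$ are linear and hence commute with the convex hull; projecting $(\mathbbmtt{f},\mathbbmtt{l})(t,x,\mathbbmtt{A})=\te{conv}\,\e(t,x,A)$ yields $\mathbbmtt{f}(t,x,\mathbbmtt{A})=\te{conv}\,f(t,x,A)$ and $\mathbbmtt{l}(t,x,\mathbbmtt{A})=\te{conv}\,l(t,x,A)$. The regularity is inherited termwise: for fixed $(x,\mathbbmtt{a})$ the map $t\mapsto\mathbbmtt{f}(t,x,\mathbbmtt{a})$ is a finite linear combination, with constant coefficients $\lambda_i$, of the $t$-measurable maps $t\mapsto f(t,x,a_i)$, hence measurable (likewise for $\mathbbmtt{l}$), while joint continuity in $(x,\mathbbmtt{a})$---and global continuity when $f,l$ are continuous---follows since $\mathbbmtt{f},\mathbbmtt{l}$ are built from $f,l$ by multiplication with $\lambda_i$ and summation. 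Finally, if $l(t,\cdot,a)$ is $k_R(t)$-Lipschitz on $\B_R$ uniformly in $a$, then for $x,y\in\B_R$
\[ |\mathbbmtt{l}(t,x,\mathbbmtt{a})-\mathbbmtt{l}(t,y,\mathbbmtt{a})|\leq\sum_{i=1}^{n+2}\lambda_i\,|l(t,x,a_i)-l(t,y,a_i)|\leq k_R(t)\,|x-y|, \]
using $\sum_i\lambda_i=1$. The only point requiring genuine care is the surjectivity onto the full convex hull---the ``$\supseteq$'' direction of \eqref{rwr-lem0}---which is exactly where the Carath\'eodory bound of $n+2$ points is needed; the representation identity and the regularity claims are then routine verifications.
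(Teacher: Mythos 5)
Your proposal is correct and takes essentially the same approach as the paper: the paper also builds $\mathbbmtt{A}$ as a product of copies of $A$ with a simplex $\Delta$ and defines $\mathbbmtt{f},\mathbbmtt{l}$ as the corresponding convex-combination averages, with the representation property, regularity, and Lipschitz bound verified exactly as you do. The only (immaterial) difference is that you use $n+2$ points to get the joint equality $(\mathbbmtt{f},\mathbbmtt{l})(t,x,\mathbbmtt{A})=\te{conv}\,\e(t,x,A)$ in $\R^{n+1}$ and then project, whereas the paper takes $\mathbbmtt{A}=A^{n+1}\times\Delta$ and applies Carath\'eodory's theorem separately in $\R^n$ and $\R$ to obtain \eqref{rwr-lem0} directly.
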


\begin{proof}
We define a simplex in the space $\R^{n+1}$ by
$$\Delta:= \{(\alpha_0,\dots,\alpha_{n})\in[0,1]^{n+1}\mid\alpha_0+\dots+\alpha_{n}=1\}.$$
Obviously, the set $\Delta$ is compact. Moreover, we define the set $\,\mathbbmtt{A}\,$
by $\mathbbmtt{A}:=A^{n+1}\times\Delta$. We notice that the set $\mathbbmtt{A}$
is compact. The functions $\mathbbmtt{f}$, $\mathbbmtt{l}$ are defined for every $t\in[0,T]$, $x\in\R^n$ and $\mathbbmtt{a}=(a_0,\dots,a_{n},\alpha_0,\dots,\alpha_{n})\in A^{n+1}\times\Delta=\mathbbmtt{A}$ by the formulas:
$$\mathbbmtt{f}(t,x,\mathbbmtt{a}):= \sum_{i=0}^{n}\alpha_i\, f(t,x,a_i),\qquad \mathbbmtt{l}(t,x,\mathbbmtt{a}):= \sum_{i=0}^{n}\alpha_i\, l(t,x,a_i).$$
We notice that $\mathbbmtt{f}$, $\mathbbmtt{l}$ are  measurable in $t$ for all $(x,\mathbbmtt{a})\in\R^n\times \mathbbmtt{A}$ and continuous in $(x,\mathbbmtt{a})$ for all $t\in[0,T]$. Besides, if functions $f,l$ are continuous, then functions $\mathbbmtt{f},\mathbbmtt{l}$ are also continuous.
It is not difficult to show that the triple  $(\mathbbmtt{A},\mathbbmtt{f},\mathbbmtt{l})$ is the representation of $H$ and $\mathbbmtt{l}(t,\cdot,\mathbbmtt{a})$ is $k_R(t)$-Lipschitz on $\B_R$ for every  $t\in[0,T]$ and $\mathbbmtt{a}\in \mathbbmtt{A}$.. 

Equalities  (\ref{rwr-lem0})  follow from the definition of the triple
$(\mathbbmtt{A},\mathbbmtt{f},\mathbbmtt{l})$ and Carath\'eodory's Theorem  (convex hull), cf. \cite[Thm. 2.29]{R-W}.
\end{proof}

\begin{proof}[Proof of Theorem \ref{podr22_th_wk}]
By Lemma  \ref{lem-tran-r1w1} there exist a nonempty, compact set $\mathbbmtt{A}$ and
functions $\mathbbmtt{f}$, $\mathbbmtt{l}$ measurable in $t$ for all $(x,\mathbbmtt{a})\in\R^n\times \mathbbmtt{A}$ and continuous in $(x,\mathbbmtt{a})$ for all $t\in[0,T]$ such that the triple $(\mathbbmtt{A},\mathbbmtt{f},\mathbbmtt{l})$ is a representation of $H$ and $\mathbbmtt{f}(t,x,\mathbbmtt{A})=\te{conv}f(t,x,A)$ for every $t\in[0,T]$, $x\in\R^n$. Therefore, by Lemma \ref{lem-tran-r2} we have for all $t\in[0,T]$, $x\in\R^n$
\begin{eqnarray}\label{podr221}
\mathbbmtt{f}(t,x,\mathbbmtt{A})=\D L(t,x,\cdot).
\end{eqnarray}

Now, we prove that the condition (BLC) holds. Let us put 
$$\lambda(t,x):=\sup_{\mathbbmtt{a}\in \mathbbmtt{A}}\mathbbmtt{l}(t,x,\mathbbmtt{a})$$
Obviously, the function $\lambda$ is measurable in $t$ for all $x\in\R^n$ and  continuous in $x$ for all $t\in[0,T]$. Let us fix $t\in[0,T]$ and $x\in\R^n$. If $\kr{v}\in\D L(t,x,\cdot)$, then by the equality (\ref{podr221}) there exists $\kr{\mathbbmtt{a}}\in \mathbbmtt{A}$ such that   $\kr{v}=\mathbbmtt{f}(t,x,\kr{\mathbbmtt{a}})$. Therefore by Lemma \ref{lem-tran-r1}
 $$L(t,x,\kr{v})=L(t,x,\mathbbmtt{f}(t,x,\kr{\mathbbmtt{a}}))\leq \mathbbmtt{l}(t,x,\kr{\mathbbmtt{a}})\leq \lambda(t,x).$$
It means that  $L(t,x,v)\leq \lambda(t,x)$ for every  $t\in[0,T]$, $x\in\R^n$, $v\in\D L(t,x,\cdot)$.

 By Lemma~\ref{lem-tran-r1w1} we have that  $\mathbbmtt{l}(t,\cdot,\mathbbmtt{a})$ is  $k_R(t)$-Lipschitz on $\B_R$ for any  $t\in[0,T]$, $\mathbbmtt{a}\in \mathbbmtt{A}$ and $R>0$. Let us fix $t\in[0,T]$, $x,y\in\B_R$ and $R>0$.  Let $\kr{\mathbbmtt{a}}\in \mathbbmtt{A}$ be such that  $\lambda(t,x)=\mathbbmtt{l}(t,x,\kr{\mathbbmtt{a}})$. Then we have
\begin{eqnarray*}
\lambda(t,x)-\lambda(t,y) &= & \mathbbmtt{l}(t,x,\kr{\mathbbmtt{a}})-\sup_{\mathbbmtt{a}\in \mathbbmtt{A}}\mathbbmtt{l}(t,y,\mathbbmtt{a})\\ & \leq &  \mathbbmtt{l}(t,x,\kr{\mathbbmtt{a}})-\mathbbmtt{l}(t,y,\kr{\mathbbmtt{a}})
\;\;\leq \;\; k_R(t)\,|x-y|.
\end{eqnarray*}
Since $t\in[0,T]$, $x,y\in\B_R$ and $R>0$ are arbitrary, so $\lambda(t,\cdot)$
is  $k_R(t)$-Lipschitz on $\B_R$ for any $t\in[0,T]$, $x,y\in\B_R$ and $R>0$.

Besides, if functions $f,l$ are continuous, then by Lemma \ref{lem-tran-r1w1}, the functions $\mathbbmtt{f},\mathbbmtt{l}$ are also continuous. Therefore, the function $\lambda$
has to be continuous.
\end{proof}


\section{Proof of representation theorem}\label{pofrepth}

\noindent In the beginning of this section  we  introduce some auxiliary definitions and lemmas. By $\mathcal{P}_{fc}(\R^m)$
we denote a family of all nonempty, closed and convex subsets of $\R^m$. Then, let  $\mathcal{P}_{kc}(\R^m)$ be a family of all nonempty, convex and compact subsets of $\R^m$.

\begin{Lem}[\te{\cite[p. 369]{A-F}}]\label{lem-pmh}
The set-valued map $P:\R^m\times\mathcal{P}_{fc}(\R^m)\multimap\mathcal{P}_{kc}(\R^m)$ defined by
\begin{equation*}
P(y,K):= K\cap\B(y,2d(y,K))
\end{equation*}
is Lipschitz with the Lipschitz constant $5$, i.e. for all $K,D\in\mathcal{P}_{fc}(\R^m)$ and $x,y\in\R^m$
\begin{equation*}
\mathscr{H}(P(x,K),P(y,D))\leq 5(\mathscr{H}(K,D)+|x-y|).
\end{equation*}
\end{Lem}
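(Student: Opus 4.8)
The plan is to verify first that $P$ is well defined as a map into $\mathcal{P}_{kc}(\R^m)$ and then, by symmetry, to bound only one of the two one-sided deviations entering $\mathscr{H}$. For the first point, $P(y,K)=K\cap\B(y,2d(y,K))$ is convex as an intersection of convex sets, closed and bounded hence compact, and nonempty because the metric projection $\pi_K(y)$ of $y$ onto the closed convex set $K$ satisfies $|\pi_K(y)-y|=d(y,K)\le 2d(y,K)$, so $\pi_K(y)\in P(y,K)$. Since the defining expression for $\mathscr{H}$ and the roles of the pairs $(x,K)$ and $(y,D)$ are symmetric, it suffices to fix $u\in P(x,K)$ and prove the pointwise estimate $d(u,P(y,D))\le 5(\mathscr{H}(K,D)+|x-y|)$; interchanging the two pairs then gives the other deviation, and hence the claim.

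Write $\eta:=\mathscr{H}(K,D)$, $\delta:=|x-y|$, $\rho:=\eta+\delta$, $r:=d(x,K)$ and $s:=d(y,D)$. The first step is the comparison $r\le s+\rho$, which follows from the $1$-Lipschitz dependence of the distance on its point-argument together with $\mathscr{H}(K,D)=\eta$: indeed $d(x,K)\le d(x,D)+\eta\le d(y,D)+|x-y|+\eta=s+\rho$. Next I would set $w:=\pi_D(u)$; since $u\in K$ one has $|u-w|=d(u,D)\le\sup_{p\in K}d(p,D)\le\eta$. If $|w-y|\le 2s$ then $w\in P(y,D)$ and $d(u,P(y,D))\le\eta\le\rho$, so this case is done. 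Otherwise I control the overshoot: from $|u-x|\le 2r$, the triangle inequality $|u-y|\le 2r+\delta$ and $r\le s+\rho$ one gets $|w-y|\le|w-u|+|u-y|\le\eta+2(s+\rho)+\delta=2s+3\rho$, that is $|w-y|-2s\le 3\rho$.

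The core of the argument is to retract $w$ into the ball $\B(y,2s)$ without leaving $D$. Put $q:=\pi_D(y)\in D$, so that $|q-y|=s\le 2s$. Convexity of $D$ gives $[w,q]\subset D$, and because $q$ is the nearest point of $D$ to $y$ the convex function $t\mapsto|(1-t)w+tq-y|$ is non-increasing, running from $|w-y|>2s$ down to $s$; hence it attains the value $2s$ at some $z\in[w,q]$, and then $z\in D\cap\B(y,2s)=P(y,D)$. It remains to bound $|w-z|$ by a multiple of the overshoot $|w-y|-2s$. Taking $y$ at the origin, the projection property forces $D$, and therefore the whole segment $[w,q]$, into the supporting halfspace $\{\,p:\langle p,q\rangle\ge|q|^2\,\}$ whose boundary is tangent to the sphere $\{\,|p|=s\,\}$ at $q$. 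Among all admissible positions the segment is steepest when it runs along this boundary hyperplane, which reduces the estimate to a planar right triangle $y,q,z$ with legs $s,\sqrt3\,s$ and hypotenuse $2s$: the chord then meets the radial direction at $30^{\circ}$, and a short convexity computation yields the sharp inequality $|w-z|\le\tfrac{2}{\sqrt3}\,(|w-y|-2s)$.

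Combining the three estimates gives $d(u,P(y,D))\le|u-w|+|w-z|\le\eta+\tfrac{2}{\sqrt3}\cdot 3\rho=\eta+2\sqrt3\,\rho\le(1+2\sqrt3)\rho<5\rho$, which is the desired pointwise bound; symmetry then yields $\mathscr{H}(P(x,K),P(y,D))\le 5(\mathscr{H}(K,D)+|x-y|)$. The degenerate cases are immediate: if $s=0$ then $P(y,D)=\{y\}$ and $|u-y|\le 2r+\delta\le 3\rho$, while $\rho=0$ forces $P(x,K)=P(y,D)$. I expect the sharp retraction estimate of the third paragraph to be the main obstacle: the naive chord bound only gives $|w-z|\le 2(|w-y|-2s)$, which propagates to the constant $7$ rather than $5$. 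Obtaining $5$ genuinely uses the factor $2$ in the radius $2d(\cdot,\cdot)$ — it is exactly this factor that keeps the retracting chord bounded away from tangency to the sphere $\{\,|p-y|=2s\,\}$ (the limiting angle being $30^{\circ}$, not $90^{\circ}$), equivalently that guarantees the ``fatness'' $D\cap\B(q,s)\subset P(y,D)$; this keeps the retraction constant $\tfrac{2}{\sqrt3}$ below the threshold $4/3$ required for the final bound.
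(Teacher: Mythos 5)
Your argument is correct, and it is worth noting that the paper itself contains no proof of this lemma: it is quoted verbatim from Aubin--Frankowska \cite[p.~369]{A-F}. Your skeleton is the classical one behind that citation --- project $u\in P(x,K)$ onto $D$ to get $w$ with $|u-w|\le\mathscr{H}(K,D)$, then retract $w$ along the segment $[w,q]$, $q=\pi_D(y)$, until it meets the sphere of radius $2s$, $s=d(y,D)$ --- but your quantitative step is sharper than the usual convex-combination estimate. I verified the crucial slope bound: placing $y$ at the origin, any $z\in[w,q]$ with $|z|=2s$ satisfies $\langle q,z\rangle=\alpha s^2$ with $\alpha\in[1,2]$ (the lower bound from the supporting halfspace at $q$, the upper from Cauchy--Schwarz), and convexity of the norm along the segment gives $|w|-2s\ge |w-z|\,\langle z,z-q\rangle/(2s|z-q|)=|w-z|\,(4-\alpha)/(2\sqrt{5-2\alpha}\,)$; the last expression is increasing in $\alpha$, so its minimum is $\sqrt{3}/2$ at $\alpha=1$, which is exactly your tangent configuration and yields $|w-z|\le (2/\sqrt{3})(|w-y|-2s)$. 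Your monotonicity claim is also sound (the squared norm along $[w,q]$ is a convex quadratic whose derivative at the endpoint $q$ is $\le 0$ by the projection inequality, hence $\le 0$ throughout), although the intermediate value theorem alone already produces $z$. Combined with $|w-y|-2s\le 3\rho$ and $|u-w|\le\eta$, you get the constant $1+2\sqrt{3}\approx 4.47$, which actually improves the stated $5$.

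Two minor inaccuracies, neither affecting the proof. First, your parenthetical calibration of the ``naive chord bound'' is off: choosing $z=(1-\lambda)w+\lambda q$ with $\lambda=(t-2s)/(t-s)$, $t=|w-y|$, gives $|w-z|\le (t-2s)(t+s)/(t-s)$, and the factor $(t+s)/(t-s)$ tends to $3$ (not $2$) as $t\downarrow 2s$, so that route propagates to the constant $10$, not $7$; your threshold remark that any retraction constant $\le 4/3$ suffices for the bound $5$ is, however, correct. Second, the extremal analysis in your third paragraph (``steepest when along the boundary hyperplane'') is only sketched; the one-line minimization over $\alpha$ above is the clean way to close it and should be included, since the entire improvement from $10$ to below $5$ rests on it.
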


The support function $\sigma(K,\cdot):\R^m\to\R$ of the set $K\in\mathcal{P}_{kc}(\R^m)$ is a convex  real-valued function defined by
\begin{equation*}
\sigma(K,p):=\max_{x\in K}\,\langle p,x\rangle,\quad \forall\,p\in\R^m.
\end{equation*}

Let $\sum_{^{\, m-1}}$ denotes the unit sphere in $\R^m$ and let $\mu$ be the measure on $\sum_{^{\, m-1}}$ proportional to the Lebesgue measure and satisfying $\mu(\sum_{^{\, m-1}})=1$.

\begin{Def}\label{df-scel}
Let $m\in\N\setminus\{1\}$. For any $K\in\mathcal{P}_{kc}(\R^m)$, its Steiner point is defined by
\begin{equation*}
s_m(K):=m\int_{\sum_{^{\, m-1}}}p\,\sigma(K,p)\;\mu(dp).
\end{equation*}
One can show that $s_m(\cdot)$ is a selection in the sense that $s_m(K)\in K$, cf. \cite[p. 366]{A-F}.
\end{Def}

\begin{Lem}[\te{\cite[p. 366]{A-F}}]\label{lem-scmh}
The function $s_m(\cdot)$ is  Lipschitz in the Hausdorff metric with the Lipschitz constant $m$ on the set of all nonempty, convex and compact subsets of $\R^m$, i.e.
\begin{equation*}
|s_m(K)-s_m(D)|\leq m\, \mathscr{H}(K,D),\quad \forall\,K,D\in\mathcal{P}_{kc}(\R^m).
\end{equation*}
\end{Lem}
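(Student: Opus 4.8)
The plan is to reduce everything to the elementary Lipschitz estimate for support functions and then integrate termwise.

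First I would establish that for any $K,D\in\mathcal{P}_{kc}(\R^m)$ one has $|\sigma(K,p)-\sigma(D,p)|\leq\mathscr{H}(K,D)$ whenever $|p|\leq 1$. To see this, fix such a $p$ and pick $\kr{x}\in K$ attaining $\sigma(K,p)=\langle p,\kr{x}\rangle$ (the maximum is attained since $K$ is compact). Because $d(\kr{x},D)\leq\mathscr{H}(K,D)$, I can choose $y\in D$ with $|\kr{x}-y|\leq\mathscr{H}(K,D)$, so that
\[
\sigma(K,p)=\langle p,\kr{x}\rangle=\langle p,y\rangle+\langle p,\kr{x}-y\rangle\leq\sigma(D,p)+|p|\,\mathscr{H}(K,D)\leq\sigma(D,p)+\mathscr{H}(K,D).
\]
Exchanging the roles of $K$ and $D$ gives the two-sided bound.

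Next I would write the difference of the Steiner points under a single integral. Since $p\mapsto\sigma(K,p)$ and $p\mapsto\sigma(D,p)$ are continuous, hence $\mu$-integrable, on $\sum_{^{\, m-1}}$, Definition \ref{df-scel} yields
\[
s_m(K)-s_m(D)=m\int_{\sum_{^{\, m-1}}}p\,\big(\sigma(K,p)-\sigma(D,p)\big)\,\mu(dp).
\]
Passing to norms (for the $\R^m$-valued integral) and using $|p|=1$ on the sphere together with the estimate of the previous paragraph, I obtain
\[
|s_m(K)-s_m(D)|\leq m\int_{\sum_{^{\, m-1}}}|p|\,|\sigma(K,p)-\sigma(D,p)|\,\mu(dp)\leq m\,\mathscr{H}(K,D)\int_{\sum_{^{\, m-1}}}\mu(dp).
\]
Because $\mu$ is normalized so that $\mu(\sum_{^{\, m-1}})=1$, the last integral equals $1$, and the claimed inequality $|s_m(K)-s_m(D)|\leq m\,\mathscr{H}(K,D)$ follows.

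There is no genuine obstacle in this argument; the only points requiring a word of care are the measurability and integrability of the support functions (both handled by their continuity on the compact sphere) and the validity of the norm inequality for the vector-valued integral. The constant $m$ is dictated purely by the normalization factor $m$ in the definition of $s_m(\cdot)$ combined with the fact that $\mu$ is a probability measure, which is exactly why the estimate is applied with $|p|=1$.
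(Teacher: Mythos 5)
Your proof is correct: the paper does not prove this lemma itself but cites \cite[p.\ 366]{A-F}, and your argument---the $|p|\,\mathscr{H}(K,D)$ Lipschitz bound for support functions combined with the norm inequality for the vector-valued integral and the normalization $\mu(\sum_{^{\, m-1}})=1$---is exactly the standard proof given in that reference. All the details you flag (attainment of the maximum by compactness, continuity hence integrability of $p\mapsto\sigma(K,p)$ on the sphere) are handled correctly, so nothing is missing.
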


\begin{Lem}[\,\te{\cite[Chap. 5 and Chap. 14]{R-W}}\,]\label{dow-prp}
Let a set-valued map $E:[0,T]\times\R^n\multimap\R^m$ has nonempty, closed values. Assume that $E(\cdot,x)$ is measurable for every $x\in\R^n$ and $E(t,\cdot)$ has a closed
graph and is lower semicontinuous for every  $t\in[0,T]$. If a real-valued map $\omega(t,x)$, $(t,x)\in[0,T]\times\R^n$, is measurable in $t$ for every $x\in\R^n$ and continuous in $x$ for
every $t\in[0,T]$, then a real-valued map defined by
\begin{equation*}
 (t,x,a)\to d(\omega(t,x)\,a,E(t,x)),\quad \forall\,(t,x,a)\in[0,T]\times\R^n\times\R^m
\end{equation*}
is $t$-measurable for every $(x,a)\in\R^n\times\R^m$ and $(x,a)$-continuous for every  $t\in[0,T]$. In addition to this, it is a $(t,x,a)$-continuous map if $\omega$ is continuous, $E$ has
a closed graph and is lower semicontinuous.
\end{Lem}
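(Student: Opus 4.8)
The plan is to set $g(t,x,a):=d(\omega(t,x)\,a,E(t,x))$ and to exploit throughout the elementary fact that, for any nonempty closed set $K\subset\R^m$, the distance $d(\cdot,K)$ is $1$-Lipschitz, i.e. $|d(u,K)-d(u',K)|\leq|u-u'|$. Each of the three assertions (measurability in $t$, continuity in $(x,a)$, joint continuity) will be reduced to a statement about the single-point distance map $(t,x)\to d(u,E(t,x))$ for a \emph{fixed} $u$, composed with the map $t\to\omega(t,x)\,a$; the coupling between the moving point $\omega(t,x)\,a$ and the moving set $E(t,x)$ is then dissolved by the Lipschitz estimate above.

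For the $t$-measurability I would fix $(x,a)$. Since $E(\cdot,x)$ is a measurable closed-valued map, the standard characterization of measurability (\cite[Chap. 14]{R-W}) gives that $t\to d(u,E(t,x))$ is measurable for every fixed $u\in\R^m$. Because $u\to d(u,E(t,x))$ is continuous (indeed $1$-Lipschitz), the map $(t,u)\to d(u,E(t,x))$ is a Carath\'eodory function and hence jointly measurable. As $t\to\omega(t,x)\,a$ is measurable for fixed $x,a$, composition yields measurability of $t\to d(\omega(t,x)\,a,E(t,x))=g(t,x,a)$.

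For continuity in $(x,a)$ with $t$ fixed, the key step is that $x\to d(u,E(t,x))$ is continuous for each fixed $u$. Lower semicontinuity of this scalar map follows from the closed graph of $E(t,\cdot)$: along a subsequence realizing $\liminf_i d(u,E(t,x_i))$, the nearest points $w_i\in E(t,x_i)$ to $u$ remain bounded, so a further subsequence converges to some $\bar w\in E(t,x)$, whence $d(u,E(t,x))\leq|u-\bar w|=\liminf_i d(u,E(t,x_i))$. Upper semicontinuity follows from Kuratowski lower semicontinuity of $E(t,\cdot)$: choosing a nearest point $w\in E(t,x)$ to $u$ and approximating it by $w_i\in E(t,x_i)$ with $w_i\to w$ gives $\limsup_i d(u,E(t,x_i))\leq|u-w|=d(u,E(t,x))$. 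Once this is established, for $(x_i,a_i)\to(x,a)$ I set $y_i:=\omega(t,x_i)\,a_i\to\omega(t,x)\,a=:y$ and estimate
\[
|g(t,x_i,a_i)-g(t,x,a)|\leq|y_i-y|+|d(y,E(t,x_i))-d(y,E(t,x))|,
\]
where the first term vanishes by continuity of $\omega(t,\cdot)$ and the second by the continuity just proved.

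Finally, the joint $(t,x,a)$-continuity is obtained by the identical decomposition, now using that $\omega$ is jointly continuous and that $E$ has a closed graph and is lower semicontinuous in $(t,x)$ jointly; the closed-graph and Kuratowski arguments then give continuity of $(t,x)\to d(u,E(t,x))$ for fixed $u$, and the same Lipschitz splitting concludes. I expect the main obstacle to be the measurability step, specifically the passage from ``$t\to d(u,E(t,x))$ measurable for each fixed $u$'' to joint measurability in $(t,u)$ and its composition with the moving point $\omega(t,x)\,a$, which is where the set-valued measurability theory of \cite{R-W} is genuinely invoked; the continuity claims, by contrast, reduce to carefully matching the two semicontinuity directions of the scalar distance to the two hypotheses (closed graph $\Rightarrow$ lower, Kuratowski lower semicontinuity $\Rightarrow$ upper).
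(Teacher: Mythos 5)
Your proof is correct, but note that the paper never proves this lemma at all: it is stated as a quotation from Rockafellar--Wets \cite[Chap.\ 5 and Chap.\ 14]{R-W}, so there is no in-paper argument to compare against. What you have done is reconstruct, essentially from scratch, exactly the content the citation is pointing to: your measurability step (measurable closed-valued $E(\cdot,x)$ gives measurable $t\to d(u,E(t,x))$ for each fixed $u$) is the distance-function characterization of measurability from Chapter 14 of \cite{R-W}, and your two semicontinuity arguments (closed graph of $E(t,\cdot)$ $\Rightarrow$ lower semicontinuity of $x\to d(u,E(t,x))$, Kuratowski lower semicontinuity $\Rightarrow$ upper semicontinuity) reproduce the Chapter 4--5 correspondence between set convergence and pointwise convergence of distance functions that the paper itself invokes elsewhere as \cite[Cor.\ 4.7]{R-W}; the $1$-Lipschitz splitting that decouples the moving point $\omega(t,x)\,a$ from the moving set $E(t,x)$ is the right and standard way to finish. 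Two small points deserve tightening. First, in the measurability step you pass from joint $(t,u)$-measurability of the Carath\'eodory map to measurability of the composition $t\mapsto d(\omega(t,x)\,a,E(t,x))$; composing a product-measurable function with the merely Lebesgue-measurable map $t\mapsto(t,\omega(t,x)\,a)$ is not automatic for arbitrary sets in the product $\sigma$-algebra, and the clean route is the usual Carath\'eodory superposition argument (approximate $t\mapsto\omega(t,x)\,a$ by simple functions and use continuity in $u$), which gives the same conclusion. Second, in the lower-semicontinuity step your boundedness claim for the nearest points $w_i$ requires $\liminf_i d(u,E(t,x_i))<+\infty$; the case $\liminf=+\infty$ makes the desired inequality trivial, so it should simply be noted and dismissed. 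With these two cosmetic repairs your argument is a complete, self-contained proof of the lemma, arguably a useful addition given that the paper delegates it entirely to \cite{R-W}.
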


\begin{Lem}[\,\te{\cite[Cor. 5.21]{R-W}}\,]\label{dow-prps}
Let a set-valued map $\Phi:[0,T]\times\R^k\multimap\R^m$ be locally bounded and has nonempty, compact values. Then $\Phi$ is continuous in the sense of the  Hausdorff's distance ($\mathscr{H}$-continuous) if and only if $\Phi$ has a closed graph and is lower semicontinuous.
\end{Lem}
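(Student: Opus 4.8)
The plan is to split the Hausdorff distance into its two one-sided excesses and to match each of them with one of the two properties on the right-hand side. For nonempty compact sets $K,D\subset\R^m$ write $\mathrm{exc}(K,D):=\sup_{x\in K}d(x,D)$, so that $\mathscr{H}(K,D)=\max\{\mathrm{exc}(K,D),\mathrm{exc}(D,K)\}$. Fixing a base point $\bar z\in[0,T]\times\R^k$ and a sequence $z_i\to\bar z$, the convergence $\mathscr{H}(\Phi(z_i),\Phi(\bar z))\to 0$ is equivalent to the conjunction of the outer condition $\mathrm{exc}(\Phi(z_i),\Phi(\bar z))\to 0$ and the inner condition $\mathrm{exc}(\Phi(\bar z),\Phi(z_i))\to 0$. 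I would prove that, under the standing hypotheses of local boundedness and nonempty compact values, the inner condition (holding along every such sequence) is equivalent to lower semicontinuity, while the outer condition is equivalent to the graph being closed. Since the ambient spaces are metric, arguing sequentially is legitimate.

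For the inner part I would argue as follows. If $\Phi$ is lower semicontinuous at $\bar z$ and $\mathrm{exc}(\Phi(\bar z),\Phi(z_i))\not\to 0$, then after passing to a subsequence there are $y_i\in\Phi(\bar z)$ with $d(y_i,\Phi(z_i))\geq\varepsilon$; compactness of $\Phi(\bar z)$ lets me assume $y_i\to y\in\Phi(\bar z)$, and the Kuratowski characterization of lower semicontinuity recalled before this lemma produces $w_i\in\Phi(z_i)$ with $w_i\to y$, whence $d(y_i,\Phi(z_i))\leq|y_i-w_i|\to 0$, a contradiction. Conversely, if the inner excess tends to $0$, then for any $y\in\Phi(\bar z)$ and $z_i\to\bar z$ we have $d(y,\Phi(z_i))\to 0$, and since each $\Phi(z_i)$ is compact the infimum is attained at some $w_i\in\Phi(z_i)$ with $w_i\to y$, which is exactly lower semicontinuity.

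For the outer part I would use local boundedness decisively. Assuming the graph is closed, suppose $\mathrm{exc}(\Phi(z_i),\Phi(\bar z))$ fails to tend to $0$; choosing $y_i\in\Phi(z_i)$ with $d(y_i,\Phi(\bar z))\geq\varepsilon$, local boundedness confines the $y_i$ to a fixed bounded set, so a subsequence converges to some $y$; then $(z_i,y_i)\to(\bar z,y)$ lies in the graph, closedness gives $y\in\Phi(\bar z)$, and $d(y_i,\Phi(\bar z))\to 0$ contradicts the choice. Conversely, if the outer excess tends to $0$ at every base point, then for $(z_i,y_i)\in\G\Phi$ with $(z_i,y_i)\to(\bar z,y)$ we obtain $d(y,\Phi(\bar z))\leq|y-y_i|+d(y_i,\Phi(\bar z))\to 0$, and closedness of the compact set $\Phi(\bar z)$ forces $y\in\Phi(\bar z)$, so the graph is closed. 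Combining the two equivalences yields the lemma.

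The step I expect to be the genuine obstacle is this last (outer) direction: local boundedness is precisely what allows a closed graph to be upgraded to outer Hausdorff convergence, through the extraction of a convergent subsequence of the $y_i$. This hypothesis cannot be dropped, since an unbounded map with closed graph need not be outer Hausdorff continuous; compactness of the values is the complementary ingredient, used to realize each one-sided distance as an attained minimum and to pass to convergent subsequences inside $\Phi(\bar z)$.
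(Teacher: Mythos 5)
Your proof is correct. Note, however, that the paper contains no proof of this lemma at all: it is quoted directly from Rockafellar--Wets \cite[Cor. 5.21]{R-W}, so your argument is a self-contained replacement rather than a variant of anything in the text. What you have reconstructed is essentially the standard route behind that corollary: in \cite{R-W}, continuity of a set-valued map is by definition the conjunction of outer and inner semicontinuity (in the Painlev\'e--Kuratowski sense), and the passage to the Pompeiu--Hausdorff metric uses exactly the boundedness/compactness mechanism you invoke --- compare the paper's own Lemma \ref{zmkh}, which records that for sequences of closed sets inside a fixed compact set, Kuratowski convergence and $\mathscr{H}$-convergence coincide. Your two one-sided equivalences map onto this cleanly: the inner excess $\sup_{y\in\Phi(\bar z)}d(y,\Phi(z_i))\to 0$ corresponds to inner (lower) semicontinuity, with compactness of $\Phi(\bar z)$ correctly used to upgrade the pointwise condition $d(y,\Phi(z_i))\to 0$ to a uniform one via your contradiction argument; the outer excess corresponds to the closed-graph (outer semicontinuity) condition, with local boundedness supplying the convergent subsequence of the $y_i$ --- and you rightly identify this as the one place where that hypothesis is indispensable (a closed graph alone does not yield outer $\mathscr{H}$-convergence for unbounded maps). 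Two harmless details worth making explicit if you write this up: the suprema defining the excesses are attained because $d(\cdot,K)$ is continuous and the values are compact, and the sequential characterization of lower semicontinuity you use is the one the paper states just before Corollary \ref{wrow-wm}, so no translation between the open-preimage definition and the sequential one is missing.
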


The Hausdorff's distance between closed balls can be estimated in the following way:
\begin{equation}\label{dow-omk}
\mathscr{H}(\B(x,r),\B(y,s))\leq |x-y|+|r-s|,\;\; \forall x,y\in\R^n,\;\; \forall\,r,s\geq 0.
\end{equation}

\begin{Th}\label{th-oparam}
Let a set-valued map $E:[0,T]\times\R^n\multimap\R^m$ has nonempty, closed and
convex values. Assume that $E(\cdot,x)$ is measurable for every $x\in\R^n$ and $E(t,\cdot)$ has a closed graph and is lower semicontinuous for every $t\in[0,T]$. Let a real-valued map $\omega(t,x)\geq 1$, $(t,x)\in[0,T]\times\R^n$, be measurable in $t$ for all $x\in\R^n$ and continuous in $x$
for all $t\in[0,T]$. Then there exists a single-valued map $\e:[0,T]\times\R^n\times\R^m\to\R^m$ such that $\e(\cdot,x,a)$ is measurable for every  $(x,a)\in\R^n\times\R^m$ and $\e(t,\cdot,\cdot)$ is continuous  for every $t\in[0,T]$.

Moreover, for all $t\in[0,T]$, $x,y\in\R^n$, $a,b\in\R^m$ 
\begin{equation}\label{th-oparam-p}
[E(t,x)\cap \B_{\omega(t,x)}]\,\subset\,\e(t,x,\B)\,\subset\, E(t,x), 
\end{equation}
\begin{equation}\label{th-oparam-n}
|\e(t,x,a)-\e(t,y,b)|\leq 5m[\,\mathscr{H}(E(t,x),E(t,y))+|\omega(t,x)\,a-\omega(t,y)\,b|\,].
\end{equation}

Additionally, a single-valued map $\e$ is continuous if $\omega$ is continuous,  $E$ has a closed graph and is lower semicontinuous.
\end{Th}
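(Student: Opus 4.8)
The plan is to define $\e$ as a three-fold composition. Send the parameter to the point $\omega(t,x)\,a\in\R^m$, apply the projection-type map $P$ of Lemma \ref{lem-pmh} to carve out a compact convex slice of $E(t,x)$ near that point, and finally pick a single point of that slice with the Steiner selection $s_m$ of Definition \ref{df-scel} (which requires $m\ge 2$, as in the intended application $m=n+1$). Explicitly, I would set
\[
\e(t,x,a):=s_m\big(P(\omega(t,x)\,a,E(t,x))\big)=s_m\big(E(t,x)\cap\B(\omega(t,x)\,a,\,2\,d(\omega(t,x)\,a,E(t,x)))\big).
\]
Since $P$ has values in $\mathcal{P}_{kc}(\R^m)$ and $s_m(K)\in K$, one gets $\e(t,x,a)\in E(t,x)$ for every $a$, which is the right-hand inclusion in \eqref{th-oparam-p}. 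For the left-hand inclusion, given $v\in E(t,x)$ with $|v|\le\omega(t,x)$ I take $a:=v/\omega(t,x)$ (legitimate since $\omega\ge 1>0$); then $a\in\B$, the centre $\omega(t,x)\,a=v$ lies in $E(t,x)$, so $d(\omega(t,x)\,a,E(t,x))=0$, $P(v,E(t,x))=\{v\}$, and $\e(t,x,a)=s_m(\{v\})=v$.

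The Lipschitz bound \eqref{th-oparam-n} is then immediate: chaining the Steiner estimate (Lemma \ref{lem-scmh}, constant $m$) with the Lipschitz estimate for $P$ (Lemma \ref{lem-pmh}, constant $5$) gives
\[
|\e(t,x,a)-\e(t,y,b)|\le m\,\mathscr{H}\big(P(\omega(t,x)a,E(t,x)),P(\omega(t,y)b,E(t,y))\big)\le 5m\big[\mathscr{H}(E(t,x),E(t,y))+|\omega(t,x)a-\omega(t,y)b|\big],
\]
read as an inequality in $\R\cup\{+\infty\}$, hence vacuous exactly where $E$ is unbounded.

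The regularity of $\e$ is the substantial part, and the displayed estimate does not suffice for it, because $E$ need not be locally bounded and $\mathscr{H}(E(t,x),E(t,y))$ may be $+\infty$. Instead I would work with the compact-valued auxiliary map $G(t,x,a):=P(\omega(t,x)a,E(t,x))$ and prove that, for fixed $t$, $G(t,\cdot,\cdot)$ is $\mathscr{H}$-continuous; continuity of $\e(t,\cdot,\cdot)$ then follows from Lemma \ref{lem-scmh}. By Lemma \ref{dow-prps} it is enough to verify that $G(t,\cdot,\cdot)$ is locally bounded with nonempty compact values, has closed graph, and is lower semicontinuous. Writing $\rho(t,x,a):=d(\omega(t,x)a,E(t,x))$, which is $(x,a)$-continuous by Lemma \ref{dow-prp}, local boundedness holds because $G(t,x,a)\subset\B(\omega(t,x)a,2\rho(t,x,a))$ with $\omega(t,\cdot),\rho(t,\cdot,\cdot)$ bounded on compacts, and the closed-graph property follows by passing to the limit in $z_i\in E(t,x_i)$, $|z_i-\omega(t,x_i)a_i|\le 2\rho(t,x_i,a_i)$ using the closed graph of $E(t,\cdot)$ and continuity of $\omega,\rho$.

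The hard part will be the lower semicontinuity of $G(t,\cdot,\cdot)$, specifically the boundary case. Given $z_0\in G(t,x_0,a_0)$ and $(x_i,a_i)\to(x_0,a_0)$, put $y_i:=\omega(t,x_i)a_i\to y_0$ and $\rho_i:=\rho(t,x_i,a_i)\to\rho_0$. If $\rho_0=0$ I take $z_i$ to be a nearest point of $E(t,x_i)$ to $y_i$; otherwise lower semicontinuity of $E(t,\cdot)$ provides $w_i\in E(t,x_i)$ with $w_i\to z_0$. When $|z_0-y_0|<2\rho_0$ these $w_i$ already lie in $G(t,x_i,a_i)$ for large $i$, so the only genuine difficulty is $|z_0-y_0|=2\rho_0>0$, where the ball constraint may be violated. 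There I would pull $w_i$ toward the projection $\pi_i$ of $y_i$ onto the convex set $E(t,x_i)$, taking $z_i:=(1-\theta_i)w_i+\theta_i\pi_i\in E(t,x_i)$ with $\theta_i\in[0,1]$ smallest so that $|z_i-y_i|\le 2\rho_i$; convexity of $\theta\mapsto|(1-\theta)w_i+\theta\pi_i-y_i|$ yields $\theta_i\le(|w_i-y_i|-2\rho_i)/(|w_i-y_i|-\rho_i)\to 0$, whence $z_i\to z_0$. Finally, measurability of $t\mapsto\e(t,x,a)$ follows once $t\mapsto G(t,x,a)$ is identified as a measurable compact-valued map, being the intersection of the measurable map $E(\cdot,x)$ with the ball-valued map $\B(\omega(\cdot,x)a,2\rho(\cdot,x,a))$ (measurability of $\rho(\cdot,x,a)$ is again Lemma \ref{dow-prp}), after which $\e(\cdot,x,a)=s_m(G(\cdot,x,a))$ is a measurable selection; and the joint continuity of $\e$ under the stronger hypotheses is obtained by running the $\mathscr{H}$-continuity argument for $G$ jointly in $(t,x,a)$, now using that $\rho$ is jointly continuous by the last assertion of Lemma \ref{dow-prp}.
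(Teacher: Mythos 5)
Your proposal is correct and is, in substance, the paper's own construction: the same map $\e(t,x,a)=s_m\bigl(P(\omega(t,x)\,a,E(t,x))\bigr)$, the same truncation trick $a:=v/\omega(t,x)$ collapsing the slice to $\{v\}$ for the left inclusion in \eqref{th-oparam-p}, the same $5m$ bound by chaining Lemmas \ref{lem-scmh} and \ref{lem-pmh}, measurability in $t$ via the intersection of measurable closed-valued maps, and $\mathscr{H}$-continuity of the compact-valued slice map (the paper's $\Phi$, your $G$) obtained through Lemma \ref{dow-prps} from local boundedness, closed graph and lower semicontinuity. The single point where you genuinely diverge is the lower semicontinuity of $\Phi(t,\cdot,\cdot)$ at the boundary case: the paper argues topologically --- when the ball has nonempty interior, the factor $2$ in the radius yields a point $z_2\in E(t,x)\cap\I G(t,x,a)$, convexity puts the segment $(z_1,z_2]$ inside $E(t,x)\cap\I G(t,x,a)$, and a small ball around such an interior point survives perturbation of $(x,a)$ while lsc of $E(t,\cdot)$ supplies points of $E(t,x')$ meeting it --- whereas you argue metrically, correcting the approximating points $w_i\to z_0$ by a convex combination with the projection $\pi_i$ of the centre onto the closed convex set $E(t,x_i)$, with the convexity estimate on $\theta\mapsto|(1-\theta)w_i+\theta\pi_i-y_i|$ forcing $\theta_i\to 0$; your computation of the admissible $\theta_i$ is sound (the denominator stays near $\rho_0>0$ in the critical case $|z_0-y_0|=2\rho_0$, and $\pi_i$ stays bounded since $|\pi_i-y_i|=\rho_i$). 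Both arguments exploit exactly the same two facts, convexity of the values and the doubled radius; yours buys an explicit quantitative rate at the cost of invoking metric projections, the paper's stays purely qualitative. Your side remarks --- that the displayed Lipschitz estimate must be read in $\R\cup\{+\infty\}$ and cannot by itself give continuity since $E$ need not be locally bounded, and that $m\geq 2$ is needed for the Steiner point (harmless, as $m=n+1$ in the application) --- are accurate and consistent with how the paper actually proceeds.
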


\begin{proof}
Let $(t,x,a)\in[0,T]\times\R^n\times\R^m$. We consider the closed ball $G(t,x,a)\subset \R^m$ with the center $\omega(t,x)\,a$ and radius $2d(\omega(t,x)\,a,E(t,x))$, i.e.
$$G(t,x,a):=\B(\omega(t,x)\,a,2d(\omega(t,x)\,a,E(t,x))).$$
 By the inequality  \eqref{dow-omk}, Lemma \ref{dow-prp} and \cite[Cor. 8.2.13]{A-F} a set-valued map $G(\cdot,x,a)$ is measurable for every  $x\in\R^n$, $a\in\R^m$ and a  set-valued map $G(t,\cdot,\cdot)$ is $\mathscr{H}$-continuous for every $t\in[0,T]$.
Moreover, $\|G(t,x,a)\|\leq \varphi(t,x,a)$ for all $t\in[0,T]$, $x\in\R^n$, $a\in\R^m$, where
$$\varphi(t,x,a):= \omega(t,x)\,|a|+2d(\omega(t,x)\,a,E(t,x)).$$
By Lemma \ref{dow-prp} and our hypotheses, we obtain that $\varphi(\cdot,x,a)$
is measurable  for every $x\in\R^n$, $a\in\R^m$ and $\varphi(t,\cdot,\cdot)$ is continuous
for every $t\in[0,T]$.

Let $P$ be the map defined in Lemma \ref{lem-pmh}. We set
$$\Phi(t,x,a):= P(\omega(t,x)\,a,E(t,x))=E(t,x)\cap G(t,x,a).$$
By our hypotheses, the set $\Phi(t,x,a)$ is nonempty, compact
 and convex.  The maps $G(\cdot,x,a)$ and $E(\cdot,a)$ are measurable and have closed values,
so the map $\Phi(\cdot,x,a)$ which is their intersection is also measurable for all $x\in\R^n$, $a\in\R^m$, cf. \cite[Thm. 8.2.4]{A-F}.

Now we show that a  map $\Phi(t,\cdot,\cdot)$ is $\mathscr{H}$-continuous for every $t\in[0,T]$.  Because of Lemma \ref{dow-prps},  it is sufficient to show that for each fixed $t\in[0,T]$ the  map $\Phi(t,\cdot,\cdot)$ is locally bounded, has a closed graph and is lower semicontinuous.

The  map $\Phi(t,\cdot,\cdot)$ has a closed graph, because it is an intersection of maps $G(t,\cdot,\cdot)$ and $E(t,\cdot)$ which have closed graphs. Moreover, $\Phi(t,\cdot,\cdot)$ is locally bounded, because $\varphi(t,\cdot,\cdot)$ is continuous and $\|\Phi(t,x,a)\|\leq\|G(t,x,a)\|\leq \varphi(t,x,a)$ for every $(t,x,a)\in[0,T]\times\R^n\times\R^m$.

We prove that  $\Phi(t,\cdot,\cdot)$ is lower semicontinuous. Let us fix $(x,a)\in\R^n\times\R^m$ and the open set $O\subset\R^m$ such that $\Phi(t,x,a)\cap O\not=\emptyset$. We consider two cases.

Case 1. Let  $\I G(t,x,a)=\emptyset$. Then $G(t,x,a)\subset O$. We know that a map $G(t,\cdot,\cdot)$ has compact values and is $\mathscr{H}$-continuous. So  we have
$G(t,x',a')\subset O$ for all $(x',a')$ near $(x,a)$. Thus  $\Phi(t,x',a')\subset G(t,x',a')\subset O$ for all $(x',a')$ near $(x,a)$. Therefore  for every $(x',a')$ sufficiently close to $(x,a)$ we have $\Phi(t,x',a')\cap O\not=\emptyset$. It means that in this case, $\Phi(t,\cdot,\cdot)$ is lower semicontinuous.

Case 2. Let $\I G(t,x,a)\not=\emptyset$.  Then by the definition of $G(t,\cdot,\cdot)$ we deduce that there exists $z_2\in E(t,x)\cap \I G(t,x,a)$. We assume that $z_1\in \Phi(t,x,a)\cap O$. Then the interval $(z_1,z_2]\subset E(t,x)\cap \I G(t,x,a)$. Consequently, we can find an element $z\in\R^m$ satisfying $z\in O\cap E(t,x)\cap \I G(t,x,a)$. Hence, for some $\varepsilon>0$ we have $\B(z,\varepsilon)\subset G(t,x,a)\cap O$. The set-valued map $G(t,\cdot, \cdot)$ is a ball whose center and radius are continuous functions. Hence, for every $(x',a')$ sufficiently close to $(x,a)$ we have $\B(z,\varepsilon/2)\subset G(t,x',a')$. On the other hand, $E$ is lower semicontinuous, so $\B(z,\varepsilon/2)\cap E(t,x')\not=\emptyset$ for all $x'$ near $x$. Therefore for every $(x',a')$ sufficiently close to $(x,a)$ we have $\Phi(t,x',a')\cap O\not=\emptyset$. It means that also in this case, $\Phi(t,\cdot,\cdot)$ is lower semicontinuous.

We define the single-valued map $\e$ from $[0,T]\times\R^n\times\R^m$ to $\R^m$ by
$$\e(t,x,a):= s_m(\Phi(t,x,a)),$$
where $s_m$ in the Steiner selection. Since $\Phi$ is measurable with respect to $t$, using the  definition of $s_m$, we deduce that $\e$ is also measurable with respect to $t$. By Lemma \ref{lem-scmh} we have for all $t,s\in[0,T]$, $x,y\in\R^n$, $a,b\in\R^m$
\begin{equation}\label{dow-nreh}
|\e(t,x,a)-\e(s,y,b)|\leq m\mathscr{H}(\Phi(t,x,a),\Phi(s,y,b)).
\end{equation}
We have shown that  $\Phi(t,\cdot,\cdot)$ is $\mathscr{H}$-continuous for every $t\in[0,T]$. By the inequality \eqref{dow-nreh} we have that $\e(t,\cdot,\cdot)$ is continuous
for all $t\in[0,T]$. Additionally, if $E$ has a closed graph and is lower
semicontinuous, and  $\omega$ is continuous, then similarly to the above, one can prove
that    $\Phi$ is $\mathscr{H}$-continuous. Then by the inequality
 \eqref{dow-nreh} we have that a single-valued map  $\e$ is continuous.

We notice that by the inequality \eqref{dow-nreh} and Lemma \ref{lem-pmh}  for every $t\in[0,T]$, $x,y\in\R^n$, $a,b\in\R^m$ we obtain the inequality \eqref{th-oparam-n}.

Now we show that $[E(t,x)\cap \B_{\omega(t,x)}]\subset\e(t,x,\B)$ for every $(t,x)\in[0,T]\times\R^n$. For this purpose, we fix $(t,x)\in[0,T]\times\R^n$. Let $z\in E(t,x)\cap \B_{\omega(t,x)}$. Setting
$a:=z/\omega(t,x)$, we derive
$$a\in\B,\quad \omega(t,x)\,a=z,\quad \Phi(t,x,a)=\{z\}.$$
The above properties and Definition \ref{df-scel} imply that
$$z=s_m(\Phi(t,x,a))=\e(t,x,a)\in\e(t,x,\B).$$

We notice that by  Definition~\ref{df-scel} we obtain for all $t\in[0,T]$, $x\in\R^n$, $a\in\R^m$
$$\e(t,x,a)=s_m(\Phi(t,x,a))\in \Phi(t,x,a)\subset E(t,x).$$
This means that $\e(t,x,\B)\,\subset\, E(t,x)$ for every $(t,x)\in[0,T]\times\R^n$.
\end{proof}

\begin{Prop}\label{prop-reprezentacja H-ogr}
Let $A$ be a nonempty set and let $\e(t,x,\cdot)$ be a single-valued map defined on $A$ into $\R^n\times\R$.  Assume that $H(t,x,\cdot)$ is a real-valued convex function and 
\begin{equation}\label{par-epi}
\G L(t,x,\cdot)\subset \e(t,x,A)\subset \E L(t,x,\cdot),
\end{equation}
where $L(t,x,\cdot\,):=H^{\ast}(t,x,\cdot\,)$.
If $\e(t,x,a)=(f(t,x,a),l(t,x,a))$ for all $a\in A$, then the triple $(A,f,l)$ is a representation of $H$. Moreover, $f(t,x,A)=\D L(t,x,\cdot)$.
\end{Prop}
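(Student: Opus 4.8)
The plan is to fix $(t,x)$, suppress it from the notation, and exploit Fenchel biconjugation together with the two inclusions in \eqref{par-epi} taken separately. Since $p\to H(t,x,p)$ is real-valued and convex, it is continuous, hence lower semicontinuous, so $L(t,x,\cdot\,)=H^{\ast}(t,x,\cdot\,)$ is proper, convex and lower semicontinuous and the biconjugation identity $H(t,x,\cdot\,)=L^{\ast}(t,x,\cdot\,)$ holds; in particular $H(t,x,p)=\sup_{v\in\R^n}\{\langle p,v\rangle-L(t,x,v)\}$.

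First I would establish $\sup_{a\in A}\{\langle p,f(t,x,a)\rangle-l(t,x,a)\}\le H(t,x,p)$ using only the right inclusion $\e(t,x,A)\subset\E L(t,x,\cdot)$. For each $a\in A$ the point $(f(t,x,a),l(t,x,a))$ lies in the epigraph, so $L(t,x,f(t,x,a))\le l(t,x,a)$; hence $\langle p,f(t,x,a)\rangle-l(t,x,a)\le\langle p,f(t,x,a)\rangle-L(t,x,f(t,x,a))\le H(t,x,p)$ by the biconjugation formula. Taking the supremum over $a$ gives the bound.

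Next I would prove the reverse inequality from the left inclusion $\G L(t,x,\cdot)\subset\e(t,x,A)$. Every $v\in\D L(t,x,\cdot)$ determines a graph point $(v,L(t,x,v))\in\G L(t,x,\cdot)$, hence there is $a\in A$ with $(f(t,x,a),l(t,x,a))=(v,L(t,x,v))$; consequently $\langle p,f(t,x,a)\rangle-l(t,x,a)=\langle p,v\rangle-L(t,x,v)$. Thus $\sup_{a\in A}\{\langle p,f(t,x,a)\rangle-l(t,x,a)\}\ge\sup_{v\in\D L(t,x,\cdot)}\{\langle p,v\rangle-L(t,x,v)\}$, and the latter supremum equals $\sup_{v\in\R^n}\{\langle p,v\rangle-L(t,x,v)\}=H(t,x,p)$, since points outside $\D L(t,x,\cdot)$ contribute $-\infty$. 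Combining the two inequalities yields the representation identity; notably no approximation argument is needed, because the left inclusion furnishes an exact preimage for every graph point.

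Finally, the equality $f(t,x,A)=\D L(t,x,\cdot)$ follows from the same two inclusions. The inclusion $\e(t,x,A)\subset\E L(t,x,\cdot)$ forces $L(t,x,f(t,x,a))\le l(t,x,a)<+\infty$, so $f(t,x,a)\in\D L(t,x,\cdot)$ and $f(t,x,A)\subset\D L(t,x,\cdot)$; conversely, for $v\in\D L(t,x,\cdot)$ the point $(v,L(t,x,v))\in\G L(t,x,\cdot)\subset\e(t,x,A)$ yields an $a\in A$ with $f(t,x,a)=v$, giving the opposite inclusion. There is no genuine obstacle here: the only delicate point is the use of $H=L^{\ast\ast}$, which requires $H(t,x,\cdot)$ to be finite and convex, hence lower semicontinuous --- precisely what is assumed --- after which both claims reduce to reading off the two inclusions.
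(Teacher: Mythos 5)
Your proposal is correct and follows essentially the same route as the paper's proof: the upper bound $\sup_{a\in A}\{\langle p,f(t,x,a)\rangle-l(t,x,a)\}\leq H(t,x,p)$ from the epigraph inclusion together with biconjugation, the reverse inequality from the exact preimages of graph points supplied by the left inclusion, and the equality $f(t,x,A)=\D L(t,x,\cdot)$ read off from the same two inclusions. No gaps; the justification of $H(t,x,\cdot\,)=L^{\ast}(t,x,\cdot\,)$ via finiteness and convexity matches the paper's argument.
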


\begin{proof}
Because $H(t,x,\cdot)$ is finite and convex, the function $L(t,x,\cdot\,):=H^{\ast}(t,x,\cdot\,)$ is proper, convex, lower semicontinuous and  $H(t,x,\cdot\,)=L^{\ast}(t,x,\cdot\,)$.

Since $\e(t,x,a)\in \E L(t,x,\cdot)$ for every  $a\in A$, we have  $(f(t,x,a),l(t,x,a))\in \E L(t,x,\cdot)$ for all  $a\in A$. From the definition of $\E L(t,x,\cdot)$, it follows that $L(t,x, f(t,x,a))\leq l(t,x,a)$ for all $a\in A$. Hence $f(t,x,a)\in\D L(t,x,\cdot)$ for all  $a\in A$. Thus, for all  $a\in A$ and $p\in\R^n$
\begin{eqnarray*}
\langle p,f(t,x,a)\rangle-l(t,x,a) &\leq & \langle\, p,f(t,x,a)\,\rangle-L(t,x,f(t,x,a))\\
&\leq &\;\;\; \sup_{v\in\D L(t,x,\cdot)}\,\{\,\langle p,v\rangle-L(t,x,v)\,\}\;\;=\;\;H(t,x,p).
\end{eqnarray*}
Thus $f(t,x,A)\subset\D L(t,x,\cdot)$ and for every $p\in\R^n$
\begin{equation}\label{nrp1-org}
\sup_{a\in A}\,\{\,\langle\, p,f(t,x,a)\,\rangle-l(t,x,a)\,\}\leq H(t,x,p).
\end{equation}

Let us fix  $\kr{v}\in\D L(t,x,\cdot)$. Then $(\kr{v},L(t,x,\kr{v}))\in \G L(t,x,\cdot)$. Because of \eqref{par-epi}, there exists $\kr{a}\in A$ such that $(\kr{v},L(t,x,\kr{v}))=\e(t,x,\kr{a})=(f(t,x,\kr{a}),l(t,x,\kr{a}))$. Hence $\kr{v}=f(t,x,\kr{a})$ and $L(t,x,\kr{v})=l(t,x,\kr{a})$. Moreover, for every $p\in\R^n$
\begin{eqnarray*}
\langle p,\kr{v}\rangle-L(t,x,\kr{v}) &= & \langle\, p,f(t,x,\kr{a})\,\rangle-l(t,x,\kr{a})\\
&\leq & \sup_{a\in A}\,\{\,\langle\, p,f(t,x,a)\,\rangle-l(t,x,a)\,\}.
\end{eqnarray*}
Thus $\D L(t,x,\cdot)\subset f(t,x,A)$ and for every $p\in\R^n$
\begin{eqnarray}\label{nrp2-org}
\nonumber H(t,x,p) &= & \sup_{v\in\D L(t,x,\cdot)}\,\{\,\langle p,v\rangle-L(t,x,v)\,\}\\
&\leq & \sup_{a\in A}\,\{\,\langle\, p,f(t,x,a)\,\rangle-l(t,x,a)\,\}.
\end{eqnarray}

Combining inequalities (\ref{nrp1-org}) and (\ref{nrp2-org}) we obtain that the triple $(A,f,l)$ is a representation of $H$. Additionally, we have that  $f(t,x,A)=\D L(t,x,\cdot)$.
\end{proof}

\begin{Th}\label{do-th-parame-lo1}
Assume that  $H$ satisfies  \te{(H1)$-$(H4)} and \te{(HLC)}. Let $L$ be given by \eqref{tran1} and satisfy \te{(BLC)}. Then there exists a single-valued map $\e:[0,T]\times\R^n\times\B\to\R^{n}\times\R$ such that $\e(\cdot,x,a)$ is measurable for every  $(x,a)\in\R^n\times\B$ and $\e(t,\cdot,\cdot)$ is continuous  for every $t\in[0,T]$. Moreover, for all $t\in[0,T]$, $x\in\R^n$
\begin{equation}\label{do-th-parame-lo1-r1}
\G L(t,x,\cdot)\subset \e(t,x,\B)\subset \E L(t,x,\cdot).
\end{equation}
Furthermore, for any $R>0$ and for all $t\in[0,T]$, $x,y\in \B_R$, $a,b\in \B$
\begin{equation}\label{do-th-parame-lo1-r2}
\hspace*{7mm}\begin{array}{l}
|\e(t,x,a)-\e(t,y,b)|\leq 10(n+1)[\,k_R(t)\,|x-y|+|\omega(t,x)\,a-\omega(t,y)\,b|\,],\\[0.3mm]
\it{where}\; \omega(t,x):=|\lambda(t,x)|+|H(t,x,0)|+c(t)(1+|x|)+1.
\end{array}
\end{equation}
Additionally, if $H$, $\lambda(\cdot,\cdot)$, $c(\cdot)$ are continuous, so is $\e$.
\end{Th}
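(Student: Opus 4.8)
\emph{Proof plan.} The plan is to apply the parametrization Theorem \ref{th-oparam} to the epigraphical set-valued map $E_L$ with $m=n+1$, after choosing the weight $\omega$ large enough that the graph $\G L(t,x,\cdot)$ sits inside the truncated set $E_L(t,x)\cap\B_{\omega(t,x)}$. Corollary \ref{wrow-wm} guarantees that $E_L$ meets all the structural hypotheses of Theorem \ref{th-oparam}: by (E1) its values are nonempty, closed and convex in $\R^{n+1}$; by (E4) the map $t\to E_L(t,x)$ is measurable; and by (E2)$-$(E3) the map $x\to E_L(t,x)$ has a closed graph and is lower semicontinuous. I take $\omega(t,x):=|\lambda(t,x)|+|H(t,x,0)|+c(t)(1+|x|)+1$, which is $\geq 1$, measurable in $t$ for each $x$ and continuous in $x$ for each $t$, because $\lambda(\cdot,x)$, $H(\cdot,x,0)$, $c(\cdot)$ are measurable while $\lambda(t,\cdot)$, $H(t,\cdot,0)$ and $x\mapsto c(t)(1+|x|)$ are continuous.

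The decisive step is to verify the inclusion $\G L(t,x,\cdot)\subset E_L(t,x)\cap\B_{\omega(t,x)}$, and this is where the full strength of (BLC) together with a Fenchel lower bound is used. Fix $(t,x)$ and $v\in\D L(t,x,\cdot)$. On one hand, (E5) yields $|v|\leq\|\D L(t,x,\cdot)\|\leq c(t)(1+|x|)$. On the other hand, (BLC) gives the upper bound $L(t,x,v)\leq\lambda(t,x)$, while the Fenchel inequality applied at $p=0$ gives the lower bound $L(t,x,v)\geq\langle v,0\rangle-H(t,x,0)=-H(t,x,0)$; hence $|L(t,x,v)|\leq|\lambda(t,x)|+|H(t,x,0)|$. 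Combining these,
\begin{equation*}
|(v,L(t,x,v))|\leq|v|+|L(t,x,v)|\leq c(t)(1+|x|)+|\lambda(t,x)|+|H(t,x,0)|\leq\omega(t,x),
\end{equation*}
so $(v,L(t,x,v))\in E_L(t,x)\cap\B_{\omega(t,x)}$. Since $\G L(t,x,\cdot)$ is exactly the set of such points as $v$ ranges over $\D L(t,x,\cdot)$, this establishes the desired inclusion. I expect this estimate to be the only genuine obstacle: the rest is bookkeeping built on Theorem \ref{th-oparam}.

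With $\omega$ thus fixed, Theorem \ref{th-oparam} produces a single-valued map $\e$ that is measurable in $t$ and continuous in $(x,a)$ and satisfies $[E_L(t,x)\cap\B_{\omega(t,x)}]\subset\e(t,x,\B)\subset E_L(t,x)$. The first inclusion combined with the previous step gives $\G L(t,x,\cdot)\subset\e(t,x,\B)$, and since $E_L(t,x)=\E L(t,x,\cdot)$ the second inclusion gives $\e(t,x,\B)\subset\E L(t,x,\cdot)$; together these are precisely \eqref{do-th-parame-lo1-r1}. For the Lipschitz estimate \eqref{do-th-parame-lo1-r2}, I insert the Hausdorff bound of Corollary \ref{hlc-cor-ner}, namely $\mathscr{H}(E_L(t,x),E_L(t,y))\leq 2k_R(t)|x-y|$ for $x,y\in\B_R$, into \eqref{th-oparam-n} with $m=n+1$; the product $5(n+1)\cdot 2=10(n+1)$ in front of $k_R(t)|x-y|$, together with the crude majorization $5(n+1)\leq 10(n+1)$ in front of $|\omega(t,x)a-\omega(t,y)b|$, yields exactly the stated bound.

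Finally, for the continuity assertion, suppose $H$, $\lambda$, $c$ are continuous. Then $\omega$ is continuous by its definition, and by (E6) the map $E_L$ has a closed graph and is lower semicontinuous jointly in $(t,x)$. The last statement of Theorem \ref{th-oparam} then gives that $\e$ is continuous, completing the proof.
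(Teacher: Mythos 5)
Your proposal is correct and follows essentially the same route as the paper's proof: the same choice of $\omega$, the same application of Theorem \ref{th-oparam} to $E_L$ via Corollaries \ref{wrow-wm} and \ref{hlc-cor-ner}, the same key inclusion $\G L(t,x,\cdot)\subset E_L(t,x)\cap\B_{\omega(t,x)}$ proved via (E5), (BLC) and the Fenchel inequality at $p=0$ (the paper cites \eqref{tran1} for exactly this lower bound), and the same constant bookkeeping $5(n+1)\cdot 2=10(n+1)$ in the Lipschitz estimate.
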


\begin{proof}
Let $\omega(t,x):=|\lambda(t,x)|+|H(t,x,0)|+c(t)(1+|x|)+1$ and $E(t,x):=E_L(t,x)$ for every
$(t,x)\in[0,T]\times\R^n$. Because of  Corollaries \ref{wrow-wm} and \ref{hlc-cor-ner},  the functions $\omega$ and $E$ satisfy assumptions of Theorem \ref{th-oparam}. Therefore, there exists a  map $\e:[0,T]\times\R^{n}\times\R^{n+1}\to\R^{n+1}$ such that $\e(\cdot,x,a)$ is measurable for every  $(x,a)\in\R^n\times\B$ and $\e(t,\cdot,\cdot)$ is continuous  for every $t\in[0,T]$. Moreover, it satisfies \eqref{th-oparam-p} and \eqref{th-oparam-n}.

 By the inequality \eqref{th-oparam-n} and Corollary~\ref{hlc-cor-ner} for all $t\in[0,T]$, $x,y\in \B_R$, $a,b\in \B$ and $R>0$
\begin{eqnarray*}
|\e(t,x,a)-\e(t,y,b)| &\leq & 5(n+1)[\,\mathscr{H}(E_L(t,x),E_L(t,y))+|\omega(t,x)\,a-\omega(t,y)\,b|\,]\\
&\leq & 10(n+1)\,k_R(t)\,|x-y|+5(n+1)|\omega(t,x)a-\omega(t,y)b|.
\end{eqnarray*}
It means that the inequality  \eqref{do-th-parame-lo1-r2} is satisfied. Moreover, if
we assume that $H$, $\lambda(\cdot,\cdot)$, $c(\cdot)$ are continuous, then $\omega$ is  continuous and  $E$ has a closed graph and is lower semicontinuous. Therefore, because of Theorem \ref{th-oparam}, we obtain that the map $\e$ is continuous.

Now we show that \eqref{do-th-parame-lo1-r1} holds. Because of \eqref{th-oparam-p},  it is sufficient to show that for each fixed $(t,x)\in[0,T]\times\R^n$  the following inclusion holds: 
\begin{equation}\label{dct-1}
\G L(t,x,\cdot)\subset[E(t,x)\cap \B_{\omega(t,x)}].
\end{equation}
Because $H(t,x,\cdot)$ is finite and convex,  the function $L(t,x,\cdot\,):=H^{\ast}(t,x,\cdot\,)$ is proper,\linebreak convex and lower semicontinuous. Let $(v,\eta)\in\G L(t,x,\cdot)$. Then from the definition~of $\G L(t,x,\cdot)$, it follows that $\eta=L(t,x,v)$. Hence  $(v,\eta)\in E(t,x)$ and $v\in\D L(t,x,\cdot)$. 
By Corollary \ref{wrow-wm} we get $\|\D L(t,x,\cdot)\|\leq c(t)(1+|x|)$. Therefore, 
\begin{equation}\label{dct-2}
|v|\leq c(t)(1+|x|).
\end{equation}
Moreover, because of \eqref{tran1} and (BLC),  we have
\begin{equation}\label{dct-3}
-|H(t,x,0)|\leq L(t,x,v)=\eta=L(t,x,v)\leq|\lambda(t,x)|.
\end{equation}
Combining inequalities \eqref{dct-2} and \eqref{dct-3} we obtain 
$$|(v,\eta)|\leq |v|+|\eta|\leq c(t)(1+|x|)+|\lambda(t,x)|+|H(t,x,0)|\leq \omega(t,x).$$
Consequently, we get  $(v,\eta)\in [E(t,x)\cap \B_{\omega(t,x)}]$. That completes the proof of \eqref{dct-1}.
\end{proof}

\begin{Rem}\label{dct-rem}
Let $\e:[0,T]\times\R^n\times\B\rightarrow\R^{n+1}$ be the function from Theorem \ref{do-th-parame-lo1}. We define two functions $f:[0,T]\times\R^n\times \B\rightarrow\R^n$ and $l:[0,T]\times\R^n\times \B\rightarrow\R$ by formulas:
\begin{equation*}
f(t,x,a):=\pi_1(\e(t,x,a))\;\;\;\te{and}\;\;\;l(t,x,a):=\pi_2(\e(t,x,a)),
\end{equation*}
where  $\pi_1(v,\eta)=v$ and $\pi_2(v,\eta)=\eta$ for all $v\in\R^n$ and $\eta\in\R$. Then for all $t\in[0,T]$, $x\in\R^n$, $a\in \B$ the following equality holds
\begin{equation*}
\e(t,x,a)=(f(t,x,a),l(t,x,a)).
\end{equation*}
Therefore, for all $t\in[0,T]$, $x,y\in\R^n$, $a,b\in \B$ we obtain
\begin{eqnarray*}
|f(t,x,a)-f(t,y,b)| &\!\!\leq\!\! &  |\e(t,x,a)-\e(t,y,b)|,\\
|\:l(t,x,a)\:-\:l(t,y,b)\:| &\!\!\leq\!\! & |\e(t,x,a)-\e(t,y,b)|.
\end{eqnarray*}
From the above inequalities it follows that  the  properties of the function $\e$ are inherited by
functions $f$ and $l$.
\end{Rem}

\begin{Rem}
It is not difficult to show that Theorem \ref{th-rprez-glo12} follows from  Proposition \ref{prop-reprezentacja H-ogr}, Theorem \ref{do-th-parame-lo1}, Remark \ref{dct-rem} and Corollary \ref{wrow-wm}.
\end{Rem}

\vspace*{-4mm}
\pagebreak


\section{Proofs of stability theorems}\label{thms-stab}
\noindent We show here that the faithful representation obtained in this paper is stable. To do this,
we need a few auxiliary definitions and facts. 
\begin{Def}
For a sequence $\{K_i\}_{i\in\N}$ of subsets of $\R^m$, the \it{upper limit} is the set
\begin{equation*}
\limsup_{i\to\infty}K_i:=\{\,x\in\R^m\;\mid\;\liminf_{i\to\infty}d(x,K_i)=0 \,\},
\end{equation*}
while the  \it{lower limit} is the set
\begin{equation*}
\liminf_{i\to\infty}K_i:=\{\,x\in\R^m\;\mid\;\limsup_{i\to\infty}d(x,K_i)=0 \,\}.
\end{equation*}
The \it{limit} of a sequence exists if the upper and lower limit sets are equal:
\begin{equation*}
\lim_{i\to\infty}K_i:= \limsup_{i\to\infty}K_i=\liminf_{i\to\infty}K_i.
\end{equation*}
\end{Def}

\begin{Rem} For nonempty, closed subsets $K_i$ and $K$ of $\R^m$, one has 
\begin{equation*}
\lim_{i\to\infty}K_i=K  \;\;\te{if and only if}\;\; \lim_{i\to\infty}d(x,K_i)=d(x,K)\;\;\te{for all}\;\; x\in\R^m,
\end{equation*}
cf. \cite[Cor. 4.7]{R-W}.
Thus, by the inequality $|d(x,K)-d(y,K)|\leq|x-y|$, that is satisfied for every
$x,y\in\R^m$ and every nonempty set $K\subset\R^m$, we obtain
 \begin{equation}\label{zmkno}
\lim_{i\to\infty}x_i=x_0,\quad\lim_{i\to\infty}K_i=K\quad\Longrightarrow\quad\lim_{i\to\infty}d(x_i,K_i)=d(x_0,K).
\end{equation}
\end{Rem}
\begin{Lem}[\te{\cite[Chap. 4, Sec C.]{R-W}}]\label{zmkh}
If $K_i$ and $K$ are nonempty, closed subsets of a given compact set in $\R^m$, then we have 
\begin{equation*}
\lim_{i\to\infty}K_i=K\;\iff\; \lim_{i\to\infty}\mathscr{H}(K_i,K)=0.
\end{equation*}
\end{Lem}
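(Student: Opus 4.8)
The plan is to prove the two implications separately; compactness of the ambient set will be needed only in the forward direction $(\Rightarrow)$.

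For the easier direction $(\Leftarrow)$, I would first record the elementary estimate $|d(x,K_i)-d(x,K)|\leq\mathscr{H}(K_i,K)$, valid for every $x\in\R^m$ and all nonempty $K_i,K$. Indeed, for any $y\in K$ one has $d(x,K_i)\leq|x-y|+\sup_{z\in K}d(z,K_i)$, and taking the infimum over $y\in K$ gives $d(x,K_i)\leq d(x,K)+\mathscr{H}(K_i,K)$; the reverse estimate follows by interchanging the roles of $K_i$ and $K$. Consequently, if $\mathscr{H}(K_i,K)\to 0$ then $d(x,K_i)\to d(x,K)$ for every $x\in\R^m$, and by the pointwise characterization of set convergence recorded in the Remark above (via \cite[Cor. 4.7]{R-W}) this is precisely $\lim_{i\to\infty}K_i=K$. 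This direction uses no compactness.

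For the forward direction $(\Rightarrow)$, let $C$ be the compact set containing all the $K_i$ and $K$, and bound the two terms defining $\mathscr{H}(K_i,K)$ separately, each by a subsequence-and-contradiction argument. For $\sup_{x\in K_i}d(x,K)$: were it not tending to $0$, there would be $\varepsilon>0$ and points $x_{i_j}\in K_{i_j}$ with $d(x_{i_j},K)\geq\varepsilon$; compactness of $C$ yields a convergent sub-subsequence $x_{i_j}\to x^{\ast}$, and since $d(x^{\ast},K_{i_j})\leq|x^{\ast}-x_{i_j}|\to 0$ we get $\liminf_{i}d(x^{\ast},K_i)=0$, i.e. $x^{\ast}\in\limsup_{i\to\infty}K_i=K$, so $d(x^{\ast},K)=0$, contradicting $d(x^{\ast},K)\geq\varepsilon$. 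For $\sup_{x\in K}d(x,K_i)$: were it not tending to $0$, there would be $\varepsilon>0$ and points $x_{i_j}\in K$ with $d(x_{i_j},K_{i_j})\geq\varepsilon$; since $K$ is compact, extract $x_{i_j}\to x^{\ast}\in K=\liminf_{i\to\infty}K_i$, whence $\limsup_{i}d(x^{\ast},K_i)=0$ and so $d(x^{\ast},K_i)\to 0$, while the $1$-Lipschitz property of the distance gives $d(x^{\ast},K_{i_j})\geq d(x_{i_j},K_{i_j})-|x_{i_j}-x^{\ast}|\geq\varepsilon-|x_{i_j}-x^{\ast}|$, forcing $\liminf_{j}d(x^{\ast},K_{i_j})\geq\varepsilon$, a contradiction. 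Together these two bounds give $\mathscr{H}(K_i,K)\to 0$.

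I expect the main obstacle to be the forward direction, where the Kuratowski (pointwise) convergence must be upgraded to the uniform Hausdorff estimate; this is exactly where the hypothesis that all sets lie in a common compact set is indispensable, as it supplies the convergent subsequences $x_{i_j}\to x^{\ast}$ and, through the equality $\limsup_{i}K_i=\liminf_{i}K_i=K$, guarantees that the limit point lands in $K$. Without compactness the equivalence fails outright, so no shortcut through the pointwise characterization alone can close this direction.
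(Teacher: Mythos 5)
Your proof is correct: the backward direction via the uniform estimate $|d(x,K_i)-d(x,K)|\leq\mathscr{H}(K_i,K)$ combined with the pointwise characterization $\lim_i K_i=K\iff d(x,K_i)\to d(x,K)$, and the forward direction via the two subsequence--compactness contradiction arguments, are exactly the standard route, and both halves are complete (including the correct use of compactness of $K$ itself, as a closed subset of the ambient compact set, in bounding $\sup_{x\in K}d(x,K_i)$). The paper offers no proof of this lemma, citing it from Rockafellar--Wets \cite[Chap. 4, Sec. C]{R-W}, so your write-up simply supplies the standard argument that the citation points to.
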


\begin{Lem}[\te{\cite[Thm. 4.32]{R-W}}]\label{thmozkp}
Let  $K_i$ and $D_i$ be convex sets in $\R^m$ for all $i\in\N$. If convex sets $K$ and $D$ satisfy $K\cap \I D\neq\emptyset$, then the following implication holds: $$\lim_{i\to\infty}K_i=K,\quad \lim_{i\to\infty}D_i=D\quad\Longrightarrow\quad \lim_{i\to\infty}\left(\,K_i\cap D_i\,\right)=K\cap D.$$
\end{Lem}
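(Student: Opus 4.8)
The plan is to prove the two inclusions whose conjunction is Painlev\'e--Kuratowski convergence, namely the outer-limit inclusion $\limsup_{i\to\infty}(K_i\cap D_i)\subset K\cap D$ and the inner-limit inclusion $K\cap D\subset\liminf_{i\to\infty}(K_i\cap D_i)$. The outer inclusion needs neither convexity nor the qualification condition: if $x\in\limsup_{i\to\infty}(K_i\cap D_i)$, then along a subsequence there are $x_{i_j}\in K_{i_j}\cap D_{i_j}$ with $x_{i_j}\to x$; since $x_{i_j}\in K_{i_j}$ and $\lim_{i\to\infty}K_i=K$ give $x\in\limsup_{i\to\infty}K_i=K$, and symmetrically $x\in D$, we conclude $x\in K\cap D$. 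Thus all the real content lies in the inner inclusion, which is exactly where convexity and $K\cap\I D\neq\emptyset$ are used.

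First I would isolate an interior-stability fact: if $\lim_{i\to\infty}D_i=D$ with all sets convex and $z\in\I D$, then $\B(z,\rho)\subset D_i$ for all large $i$ and some $\rho>0$. To see this, choose affinely independent $y_0,\dots,y_m\in D$ with $z$ in the interior of $\te{conv}\{y_0,\dots,y_m\}$, so that $\B(z,2\rho)\subset\te{conv}\{y_0,\dots,y_m\}$ for some $\rho>0$. Since $D=\liminf_{i\to\infty}D_i$, pick $y^i_k\in D_i$ with $y^i_k\to y_k$; convexity yields $\te{conv}\{y^i_0,\dots,y^i_m\}\subset D_i$, and because these simplices converge to $\te{conv}\{y_0,\dots,y_m\}$ they eventually contain the compact ball $\B(z,\rho)$, giving the claim.

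Next I would reduce the inner inclusion to approximating points of $K\cap\I D$. Fix $\kr z\in K\cap\I D$, which is nonempty by hypothesis, and let $x\in K\cap D$ be arbitrary. By convexity of $K$ and of $D$ the half-open segment $(x,\kr z]$ is contained in $K\cap\I D$, so the points $x^{(k)}:=(1-\tfrac1k)\,x+\tfrac1k\,\kr z$ lie in $K\cap\I D$ and converge to $x$; since the inner limit is closed, a diagonal argument reduces matters to realizing each point of $K\cap\I D$ as a limit of points from $K_i\cap D_i$. So let $w\in K\cap\I D$. Using $\lim_{i\to\infty}K_i=K$ I choose $w_i\in K_i$ with $w_i\to w$, and applying the interior-stability fact to $w\in\I D$ I get $\B(w,\rho)\subset D_i$ for large $i$; as $w_i\to w$ we then have $w_i\in\B(w,\rho)\subset D_i$ eventually. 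Hence $w_i\in K_i\cap D_i$ and $w_i\to w$, as required.

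The main obstacle is the inner inclusion, and inside it the interior-stability fact together with the accessibility step $(x,\kr z]\subset K\cap\I D$. This accessibility step is the only place the qualification condition $K\cap\I D\neq\emptyset$ is used: it lets one push an arbitrary boundary point of $K\cap D$ into the interior of $D$, where the selections $w_i\in K_i$ are automatically caught by $D_i$. Without it the two limit sets may intersect while every $K_i\cap D_i$ is empty, so this condition is simultaneously the crux of the argument and the reason the hypothesis cannot be dropped.
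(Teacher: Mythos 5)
Your proof is correct. There is nothing internal to compare it with: the paper does not prove this lemma but imports it verbatim from Rockafellar--Wets \cite[Thm.\ 4.32]{R-W}, so your argument supplies a self-contained elementary proof where the paper has only a citation. Your decomposition is the natural one: the outer-limit inclusion needs neither convexity nor the qualification condition, and the inner inclusion is handled by the line segment principle ($x\in D$, $\kr{z}\in\I D$ imply $(x,\kr{z}]\subset\I D$) together with closedness of the inner limit, reducing everything to points $w\in K\cap\I D$, which are then caught by the interior-stability fact. Two small points deserve a line each. First, the step ``because these simplices converge they eventually contain $\B(z,\rho)$'' is asserted without justification; it is true and can be completed by separation: if $w_i\in\B(z,\rho)\setminus\te{conv}\{y^i_0,\dots,y^i_m\}$ along a subsequence, pick unit vectors $u_i$ with $\langle u_i,w_i\rangle\geq\sup\{\langle u_i,c\rangle : c\in\te{conv}\{y^i_0,\dots,y^i_m\}\}$, pass to limits $w_i\to w$, $u_i\to u$, and observe that $w+\rho u\in\B(z,2\rho)\subset\te{conv}\{y_0,\dots,y_m\}$ forces $\langle u,w\rangle+\rho\leq\langle u,w\rangle$, a contradiction; alternatively, for large $i$ the affine map sending $y_k$ to $y^i_k$ converges to the identity uniformly on compacts and the claim follows. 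Second, your choice of $m+1$ affinely independent points in $D$ tacitly uses that $\I D\neq\emptyset$ makes $D$ full-dimensional, which is guaranteed by the hypothesis $K\cap\I D\neq\emptyset$ and is worth stating. With these elaborations the proof is complete, and you correctly locate the sole use of the qualification condition in the accessibility step.
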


The following lemma is a consequence of Wijsman's Theorem, cf. \cite[Thm. 11.34]{R-W},

\begin{Lem}\label{klconv}
Assume that  $H_i:[0,T]\times\R^n\times\R^n\to\R$, $i\in\N\cup\{0\}$, are continuous and satisfy \tn{(H3)}. Let $L_i$, $i\in\N\cup\{0\}$, be given by \eqref{tran1}.  If $H_i$ converge to $H_0$ uniformly on compacts in $[0,T]\times\R^n\times\R^n$, then for every $(t_0,x_0)\in[0,T]\times\R^n$ we have
\begin{equation}\label{thm-zjnzz3}
\lim_{i\to\infty}E_{L_i}(t_i,x_i)=E_{L_0}(t_0,x_0)\;\;\it{for every sequence}\;\;(t_i,x_i)\to(t_0,x_0).
\end{equation}
\end{Lem}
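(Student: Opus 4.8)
The plan is to read the conclusion through the identification $E_{L_i}(t,x)=\E L_i(t,x,\cdot)$, so that the asserted Painlev\'e--Kuratowski convergence $\lim_{i\to\infty}E_{L_i}(t_i,x_i)=E_{L_0}(t_0,x_0)$ is, by definition, nothing but the epi-convergence of the Lagrangians $v\mapsto L_i(t_i,x_i,v)$ to $v\mapsto L_0(t_0,x_0,v)$. Since each $H_i(t,x,\cdot)$ is real-valued and convex by \te{(H3)}, it is proper, convex and lower semicontinuous, hence so is its conjugate $L_i(t,x,\cdot)=H_i^{\ast}(t,x,\cdot)$, and $H_i(t,x,\cdot)=L_i^{\ast}(t,x,\cdot)$ (as recorded before Proposition \ref{prop2-fmw}). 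I would then invoke Wijsman's theorem \cite[Thm. 11.34]{R-W}, which states that on proper lower semicontinuous convex functions the Legendre--Fenchel transform is bicontinuous with respect to epi-convergence. Consequently it suffices to prove that the Hamiltonians $H_i(t_i,x_i,\cdot)$ epi-converge to $H_0(t_0,x_0,\cdot)$, and the set convergence of the epigraphs $E_{L_i}(t_i,x_i)$ will follow automatically.

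To verify the epi-convergence of the Hamiltonians I would first establish plain pointwise convergence $H_i(t_i,x_i,p)\to H_0(t_0,x_0,p)$ for each fixed $p\in\R^n$, via the split
\begin{equation*}
|H_i(t_i,x_i,p)-H_0(t_0,x_0,p)|\leq |H_i(t_i,x_i,p)-H_0(t_i,x_i,p)|+|H_0(t_i,x_i,p)-H_0(t_0,x_0,p)|.
\end{equation*}
The first summand tends to $0$ because $H_i\to H_0$ uniformly on the compact set $\{(t_i,x_i,p):i\in\N\}\cup\{(t_0,x_0,p)\}$ (the points $(t_i,x_i,p)$ accumulate only at $(t_0,x_0,p)$), while the second tends to $0$ by continuity of $H_0$. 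Because the functions $p\mapsto H_i(t_i,x_i,p)$ are finite and convex, this pointwise limit automatically upgrades to epi-convergence (and even to uniform convergence on compacta), by the standard comparison between pointwise convergence and epi-convergence for convex functions, cf. \cite{R-W}. Combined with Wijsman's theorem, this yields the epi-convergence of $L_i(t_i,x_i,\cdot)$ to $L_0(t_0,x_0,\cdot)$, i.e.\ the desired \eqref{thm-zjnzz3}.

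The main obstacle is the handling of the moving base point: the hypothesis supplies only uniform-on-compacts convergence of $H_i$ as a function of all three variables, and one must couple this with the continuity of the limit $H_0$ to pass to the varying arguments $(t_i,x_i)\to(t_0,x_0)$; the triangle-inequality split above is exactly the device that accomplishes this. A secondary point that I would check carefully is that the hypotheses of Wijsman's theorem genuinely hold at the moving points, namely that $H_i(t_i,x_i,\cdot)$ and $H_0(t_0,x_0,\cdot)$ are all proper, convex and lower semicontinuous, which is immediate from \te{(H3)} and finiteness, and that the conjugates $L_i(t_i,x_i,\cdot)$ are proper, which is furnished by Proposition \ref{prop2-fmw} \te{(L3)}.
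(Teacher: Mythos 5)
Your proposal is correct and follows essentially the same route as the paper, which dispatches this lemma in one line as ``a consequence of Wijsman's Theorem, cf.\ \cite[Thm.~11.34]{R-W}.'' Your write-up merely supplies the details the paper leaves implicit: the triangle-inequality split handling the moving base point $(t_i,x_i)\to(t_0,x_0)$, the upgrade from pointwise to epi-convergence for finite convex functions, and the identification of set convergence of $E_{L_i}(t_i,x_i)$ with epi-convergence of $L_i(t_i,x_i,\cdot)$ --- all of which are the intended reading of the paper's citation.
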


\subsection{Proofs of stability theorems.}
Assume that  $H_i:[0,T]\times\R^n\times\R^n\to\R$, $i\in\N\cup\{0\}$, are continuous and satisfy (H3).   Let $L_i$, $i\in\N\cup\{0\}$, be given by  \eqref{tran1}. We consider continuous real-valued maps $\omega_i(t,x)\geq 1$, $(t,x)\in[0,T]\times\R^n$, $i\in\N\cup\{0\}$.

Let $(t,x,a)\in[0,T]\times\R^n\times\R^{n+1}$ and $i\in\N\cup\{0\}$. We consider the closed balls
\begin{equation*}
G_i(t,x,a):=\B(\omega_i(t,x)\,a,2d(\omega_i(t,x)\,a,E_{L_i}(t,x))).
\end{equation*}

\noindent We notice that $\|G_i(t,x,a)\|\leq \varphi_i(t,x,a)$, where
\begin{equation*}\varphi_i(t,x,a):= \omega_i(t,x)\,|a|+2d(\omega_i(t,x)\,a,E_{L_i}(t,x)).
\end{equation*}

\noindent Let $P$ be the map defined in Lemma \ref{lem-pmh}. We define the following sets
\begin{equation*}
\Phi_i(t,x,a):= P(\omega_i(t,x)\,a,E_{L_i}(t,x))=E_{L_i}(t,x)\cap G_i(t,x,a)
\end{equation*}
By Corollary \ref{wrow-wm} and our hypotheses, the sets $\Phi_i(t,x,a)$ are
nonempty, compact, convex.

We define the single-valued maps $\e_i$, $i\in\N\cup\{0\}$, from $[0,T]\times\R^n\times\R^{n+1}$ to $\R^{n+1}$ by
\begin{equation}\label{stbprof0}
    \e_i(t,x,a):= s_{n+1}(\Phi_i(t,x,a))
\end{equation}
where $s_{n+1}$ is the Steiner selection. By Lemma \ref{lem-scmh} we have
\begin{equation}\label{stbprof1}
|\e_i(t,x,a)-\e_0(s,y,b)|\leq (n+1)\mathscr{H}(\Phi_i(t,x,a),\Phi_0(s,y,b))
\end{equation}
for all $t,s\in[0,T]$, $x,y\in\R^n$, $a,b\in\R^{n+1}$, $i\in\N$.

\begin{Th}\label{thm-zjnzz}
Let $H_i$, $L_i$, $\omega_i$, $\e_i$,  $i\in\N\cup\{0\}$ be as above. If $H_i$ converge to $H_0$ uniformly on compacts in $[0,T]\times\R^n\times\R^n$ and $\omega_i$ converge to $\omega_0$ uniformly on compacts in $[0,T]\times\R^n$, then for every $(t_0,x_0,a_0)\in[0,T]\times\R^n\times\R^{n+1}$ we have
\begin{equation*}
\e_i(t_i,x_i,a_i)\to \e_0(t_0,x_0,a_0)\;\;\;\it{for every sequence}\;\;\; (t_i,x_i,a_i)\rightarrow (t_0,x_0,a_0).
\end{equation*}
\end{Th}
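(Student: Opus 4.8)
The plan is to reduce the assertion to a single set-convergence statement about the intersections $\Phi_i$ and then feed it into the Steiner-selection estimate \eqref{stbprof1}. Fix $(t_0,x_0,a_0)$ and an arbitrary sequence $(t_i,x_i,a_i)\to(t_0,x_0,a_0)$. Since
$$|\e_i(t_i,x_i,a_i)-\e_0(t_0,x_0,a_0)|\leq (n+1)\,\mathscr{H}(\Phi_i(t_i,x_i,a_i),\Phi_0(t_0,x_0,a_0)),$$
it suffices to prove $\mathscr{H}(\Phi_i(t_i,x_i,a_i),\Phi_0(t_0,x_0,a_0))\to 0$. Because every $\Phi_i(t_i,x_i,a_i)$ is a nonempty compact subset of the ball $G_i(t_i,x_i,a_i)$, whose centers and radii will be shown to be bounded, all these sets lie in a common compact subset of $\R^{n+1}$; hence by Lemma \ref{zmkh} it is enough to establish the set-convergence $\lim_{i\to\infty}\Phi_i(t_i,x_i,a_i)=\Phi_0(t_0,x_0,a_0)$.

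First I would record the convergence of the two sets whose intersection defines $\Phi_i$. By Lemma \ref{klconv} the uniform convergence of $H_i$ to $H_0$ yields $\lim_{i\to\infty}E_{L_i}(t_i,x_i)=E_{L_0}(t_0,x_0)$. For the balls $G_i(t_i,x_i,a_i)=\B(\omega_i(t_i,x_i)\,a_i,\,2d(\omega_i(t_i,x_i)\,a_i,E_{L_i}(t_i,x_i)))$ the centers converge, since uniform convergence of $\omega_i$ to $\omega_0$ on compacts together with the continuity of $\omega_0$ gives $\omega_i(t_i,x_i)\to\omega_0(t_0,x_0)$ and therefore $\omega_i(t_i,x_i)\,a_i\to\omega_0(t_0,x_0)\,a_0$; the radii converge by \eqref{zmkno}, applied to these converging centers and to the converging sets $E_{L_i}(t_i,x_i)$. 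Using \eqref{dow-omk} and Lemma \ref{zmkh} this produces $\lim_{i\to\infty}G_i(t_i,x_i,a_i)=G_0(t_0,x_0,a_0)$, and in particular bounds the centers and radii, which justifies the compactness claim made above.

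The remaining and genuinely delicate point is to pass to the limit in the intersection $\Phi_i=E_{L_i}\cap G_i$. The natural tool is Lemma \ref{thmozkp}, but it requires the interior condition $E_{L_0}(t_0,x_0)\cap\I G_0(t_0,x_0,a_0)\neq\emptyset$, which fails exactly when the center lies on $E_{L_0}(t_0,x_0)$. I would therefore split into two cases. If $d(\omega_0(t_0,x_0)\,a_0,E_{L_0}(t_0,x_0))>0$, then a nearest point of $E_{L_0}(t_0,x_0)$ to the center sits at distance equal to half the radius of $G_0(t_0,x_0,a_0)$, hence in its interior, so Lemma \ref{thmozkp} applies and gives $\lim_{i\to\infty}\Phi_i(t_i,x_i,a_i)=E_{L_0}(t_0,x_0)\cap G_0(t_0,x_0,a_0)=\Phi_0(t_0,x_0,a_0)$. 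If instead that distance is $0$, then the center $c:=\omega_0(t_0,x_0)\,a_0$ belongs to the closed set $E_{L_0}(t_0,x_0)$, so $G_0(t_0,x_0,a_0)=\{c\}$ and $\Phi_0(t_0,x_0,a_0)=\{c\}$; here I argue directly, noting that $\Phi_i(t_i,x_i,a_i)\subset G_i(t_i,x_i,a_i)$ is a nonempty subset of balls whose centers tend to $c$ and whose radii tend to $0$, which forces $\lim_{i\to\infty}\Phi_i(t_i,x_i,a_i)=\{c\}=\Phi_0(t_0,x_0,a_0)$. In both cases the set-convergence holds, and combined with the compactness reduction and \eqref{stbprof1} this completes the argument. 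I expect the case distinction forced by the interior hypothesis of Lemma \ref{thmozkp} to be the main obstacle.
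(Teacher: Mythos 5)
Your proposal is correct and follows essentially the same route as the paper's proof: the same reduction via the Steiner-selection estimate \eqref{stbprof1} and Lemma \ref{zmkh}, the same limits $\lim_{i\to\infty}E_{L_i}(t_i,x_i)=E_{L_0}(t_0,x_0)$ and $\lim_{i\to\infty}G_i(t_i,x_i,a_i)=G_0(t_0,x_0,a_0)$, and the same case split --- your dichotomy $d(\omega_0(t_0,x_0)\,a_0,E_{L_0}(t_0,x_0))>0$ versus $=0$ is exactly the paper's $\I G_0(t_0,x_0,a_0)\neq\emptyset$ versus $\I G_0(t_0,x_0,a_0)=\emptyset$, handled respectively by Lemma \ref{thmozkp} and by the direct shrinking-balls argument. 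The only cosmetic difference is that you spell out the nearest-point verification of the interior hypothesis of Lemma \ref{thmozkp}, which the paper leaves implicit.
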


\begin{proof}
Because of inequality \eqref{stbprof1}, it is sufficient to show that
\begin{equation}\label{thm-zjnzz1}
\mathscr{H}(\Phi_i(t_i,x_i,a_i),\Phi_0(t_0,x_0,a_0))\to 0,\quad \forall\;(t_i,x_i,a_i)\rightarrow (t_0,x_0,a_0).
\end{equation}

Let $(t_i,x_i,a_i)\rightarrow (t_0,x_0,a_0)$. Then, by our hypotheses, we have $\omega_i(t_i,x_i)\to \omega_0(t_0,x_0)$. The latter, together with \eqref{zmkno} and \eqref{thm-zjnzz3}, implies that $\varphi_i(t_i,x_i,a_i)\to \varphi_0(t_0,x_0,a_0)$. Let $C>0$ be a constant such that  $\varphi_i(t_i,x_i,a_i)\leq C$ for every $i\in\N\cup\{0\}$. Since  $\|G_i(t_i,x_i,a_i)\|\leq \varphi_i(t_i,x_i,a_i)$ for every $i\in\N\cup\{0\}$, thus  $\Phi_i(t_i,x_i,a_i)\subset G_i(t_i,x_i,a_i)\subset\B_C$ for every $i\in\N\cup\{0\}$. Because of Lemma \ref{zmkh}, it is sufficient to show that
\begin{equation}\label{thm-zjnzz2}
\lim_{i\to\infty}\Phi_i(t_i,x_i,a_i)=\Phi_0(t_0,x_0,a_0).
\end{equation}

By the inequality \eqref{dow-omk} for all $i\in\N$ we have 
\begin{equation*}
\mathscr{H}(G_i(t_i,x_i,a_i),G_0(t_0,x_0,a_0))\leq |\varphi_i(t_i,x_i,a_i)-\varphi_0(t_0,x_0,a_0)|+2|\omega_i(t_i,x_i)\,a_i- \omega(t_0,x_0)\,a_0|.
\end{equation*}
Passing to the limit, we obtain $\lim_{i\to\infty}\mathscr{H}(G_i(t_i,x_i,a_i),G_0(t_0,x_0,a_0))=0$.
Therefore, by Lemma \ref{zmkh} we have
\begin{equation}\label{thm-zjnzz4}
    \lim_{i\to\infty}G_i(t_i,x_i,a_i)=G_0(t_0,x_0,a_0).
\end{equation}

Let $\I G_0(t_0,x_0,a_0)\not=\emptyset$. Then $E_{L_0}(t_0,x_0)\cap\I G_0(t_0,x_0,a_0)\not=\emptyset$. Thus, by Theorem~\ref{thmozkp} and properties \eqref{thm-zjnzz3}, \eqref{thm-zjnzz4}, we have $\lim_{i\to\infty}\Phi_i(t_i,x_i,a_i)=\Phi_0(t_0,x_0,a_0)$.

Let $\I G_0(t_0,x_0,a_0)=\emptyset$. Then $G_0(t_0,x_0,a_0)=\Phi_0(t_0,x_0,a_0)=\{\omega_0(t_0,x_0)a_0\}\!\subset\! E_{L_0}(t_0,x_0)$. Because of  \eqref{thm-zjnzz4},
$\limsup_{i\to\infty}\Phi_i(t_i,x_i,a_i)\subset \{\omega(t_0,x_0)\,a_0\}$.
Let $y_i\in \Phi_i(t_i,x_i,a_i)$ for every $i\in\N$. Then $y_i\in G_i(t_i,x_i,a_i)$ for all $i\in\N$. Therefore, by definition of $G_i(t_i,x_i,a_i)$ we have $|y_i-\omega_i(t_i,x_i)\,a_i|\leq 2d(\omega_i(t_i,x_i)\,a_i,E_{L_i}(t_i,x_i))$ for all $i\in\N$. Because of \eqref{zmkno} and \eqref{thm-zjnzz3},
$$\lim_{i\to\infty}d(\omega_i(t_i,x_i)\,a_i,E_{L_i}(t_i,x_i))=d(\omega_0(t_0,x_0)\,a,E_{L_0}(t_0,x_0))=0.$$ 
Thus, $y_i\to \omega_0(t_0,x_0)\,a_0$. It means that $\omega(t_0,x_0)\,a_0\in\liminf_{i\to\infty}\Phi_i(t_i,x_i,a_i)$. Consequently,
\begin{equation*}
\{\omega(t_0,x_0)\,a_0\}\subset\liminf_{i\to\infty}\Phi_i(t_i,x_i,a_i)\subset\limsup_{i\to\infty}\Phi_i(t_i,x_i,a_i)\subset\{\omega(t_0,x_0)\,a_0\}.
\end{equation*}
So, $\lim_{i\to\infty}\Phi_i(t_i,x_i,a_i)=\{\omega_0(t_0,x_0)\,a_0\}=\Phi_0(t_0,x_0,a_0)$, that completes the proof.
\end{proof}

\begin{Rem}\label{rem-stb-001}
Let $\e_i:[0,T]\times\R^n\times\R^{n+1}\rightarrow\R^{n+1}$, $i\in\N\cup\{0\}$, be as above. For all $i\in\N\cup\{0\}$ we define the functions $f_i:[0,T]\times\R^n\times \R^{n+1}\rightarrow\R^n$ and $l_i:[0,T]\times\R^n\times \R^{n+1}\rightarrow\R$ by 
\begin{equation*}
f_i(t,x,a):=\pi_1(\e_i(t,x,a))\;\;\;\te{and}\;\;\; l_i(t,x,a):=\pi_2(\e_i(t,x,a)),
\end{equation*}
where $\pi_1(v,\eta)=v$ and $\pi_2(v,\eta)=\eta$ for all $v\in\R^n$ and $\eta\in\R$.
Then for all $t\in[0,T]$, $x\in\R^n$, $a\in \R^{n+1}$,  $i\in\N\cup\{0\}$ the following
equality holds:
\begin{equation*}
\e_i(t,x,a)=(f_i(t,x,a),l_i(t,x,a)).
\end{equation*}
Therefore, for all $i\in\N$ we obtain:
\begin{eqnarray*}
|f_i(t_i,x_i,a_i)-f_0(t_0,x_0,a_0)| &\!\!\!\!\leq\!\!\!\! &  |\e_i(t_i,x_i,a_i)-\e_0(t_0,x_0,a_0)|,\\
|\,l_i(t_i,x_i,a_i)-l_0(t_0,x_0,a_0)\,| &\!\!\!\!\leq\!\!\!\! & |\e_i(t_i,x_i,a_i)-\e_0(t_0,x_0,a_0)|.
\end{eqnarray*}
\end{Rem}

\begin{Rem}
Theorem \ref{thm-zjnzz} and Remark \ref{rem-stb-001} imply Theorem \ref{thm-rep-stab2}, if in the place of  $\omega_i(t,x)$ we take $\omega_i(t,x):=|\lambda_i(t,x)|+|H_i(t,x,0)|+c_i(t)(1+|x|)+1$ for all $i\in\N\cup\{0\}$. Theorem \ref{thm-rep-stab4} can be proven similarly as above, indeed, it is enough to fix $t\in[0,T]$.
\end{Rem}


\section{Proof of Theorem \ref{thm-reduct}}\label{thm-reduct-sect}

\noindent Let $\|x\|:=\sup\,\{\,|x(t)|\,\mid t\in[0,1]\,\}$. Because of  $L(t,x,\cdot\,)=H^{\ast}(t,x,\cdot\,)$ we obtain  $-|H(t,x,0)|\leq L(t,x,v)$ for all $t\in[0,T]$, $x\in\R^n$, $v\in\R^n$. Moreover, if $H(t,\cdot,0)$ is $k_R(t)$-Lipschitz on $\B_R$ for all $t\in[0,1]$ and $R>0$, then for every $x(\cdot)\in \mathcal{A}([0,1],\R^n)$ we have
\begin{equation}\label{roz6-r1}
-k_{\|x\|}(t)\|x\|-|H(t,0,0)|\,\leq\, L(t,x(t),\dot{x}(t)) \;\;a.e.\;t\in[0,1].
\end{equation}

\begin{Lem}\label{roz6-l1}
Assume that \te{(H1)$-$(H4)} and \te{(HLC)} hold with integrable functions $c(\cdot)$, $k_R(\cdot)$, $H(\cdot,0,0)$.  Assume further that $\phi$ is a proper, lower semicontinuous function  and  there exists $M\geq 0$ such that $\min\{\,|z|,|x|\,\}\leq M$ for all $(z,x)\in\D\phi$. Then there exist $D,R\geq 0$ such that for all $x(\cdot)\in \mathcal{A}([0,1],\R^n)$ we have
\begin{equation}\label{roz6-nl1}
-D-R\int_0^1k_R(t)\,dt-\int_0^1|H(t,0,0)|\,dt\,\leq\,\Gamma[x(\cdot)].
\end{equation}
\end{Lem}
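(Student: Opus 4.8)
The plan is to discard the trivial case and then, for a genuinely admissible arc, first confine the whole trajectory to a fixed ball and afterwards bound the two terms of $\Gamma$ separately. Since $c(\cdot)$, $k_R(\cdot)$ and $H(\cdot,0,0)$ are integrable, once $D$ and $R$ are fixed the left-hand side of \eqref{roz6-nl1} is a finite real number; hence the inequality is automatic whenever $\Gamma[x(\cdot)]=+\infty$, and I may assume $x(\cdot)\in\D\Gamma$. For such an arc $x(\cdot)$ is continuous on $[0,1]$, so $\|x\|<+\infty$ a priori and, by \eqref{roz6-r1}, the integrand $t\mapsto L(t,x(t),\dot x(t))$ is bounded below by an integrable function; thus finiteness of $\int_0^1 L$ forces $L(t,x(t),\dot x(t))<+\infty$, i.e.\ $\dot x(t)\in\D L(t,x(t),\cdot)$, for a.e.\ $t$. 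Finiteness of $\Gamma$ also gives $\phi(x(0),x(1))<+\infty$, so $(x(0),x(1))\in\D\phi$ and, by hypothesis, $\min\{|x(0)|,|x(1)|\}\leq M$.

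Next I extract the sublinear velocity bound and run Gronwall. By Corollary~\ref{wrow-wm}\,\te{(E5)} one has $|\dot x(t)|\leq\|\D L(t,x(t),\cdot)\|\leq c(t)(1+|x(t)|)$ for a.e.\ $t$, whence $|x(t)|\leq|x(s)|+\int_s^t c(\tau)(1+|x(\tau)|)\,d\tau$. Applying Gronwall's Lemma forward from the endpoint $0$ when $|x(0)|\leq M$, and backward from the endpoint $1$ when $|x(1)|\leq M$, yields in either case $1+|x(t)|\leq(1+M)\exp\!\big(\int_0^1 c(\tau)\,d\tau\big)$ for all $t\in[0,1]$. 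I therefore set $R:=(1+M)\exp\!\big(\int_0^1 c(\tau)\,d\tau\big)$, which is finite by integrability of $c$, satisfies $R\geq M\geq 0$, and guarantees $\|x\|\leq R$; in particular $x(t)\in\B_R$ for every $t$ and $(x(0),x(1))\in\B_R\times\B_R$.

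With the trajectory trapped in $\B_R$ the two bounds are routine. Arguing as in \eqref{roz6-r1} but on $\B_R$ in place of $\B_{\|x\|}$, the Lipschitz property of $H(t,\cdot,0)$ on $\B_R$ gives $|H(t,x(t),0)-H(t,0,0)|\leq k_R(t)\,|x(t)|\leq k_R(t)\,R$, which together with $-|H(t,x(t),0)|\leq L(t,x(t),\dot x(t))$ yields $L(t,x(t),\dot x(t))\geq-|H(t,0,0)|-R\,k_R(t)$ a.e., and integration gives $\int_0^1 L\geq-\int_0^1|H(t,0,0)|\,dt-R\int_0^1 k_R(t)\,dt$. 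For the boundary term, $\phi$ is proper and lower semicontinuous, hence bounded below on the compact set $\B_R\times\B_R$; setting $D:=\max\{\,0,\,-\inf_{\B_R\times\B_R}\phi\,\}$ (a finite nonnegative number) gives $\phi(x(0),x(1))\geq-D$. Adding the two estimates is exactly \eqref{roz6-nl1}.

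The crux — and the only real obstacle — is the uniformity of $R$ over all admissible arcs: the ready-made bound \eqref{roz6-r1} depends on $\|x\|$, which ranges without bound, so the whole task is to manufacture a single radius valid for every $x(\cdot)$. This is where the structural assumption on $\phi$ earns its keep: it controls one endpoint, and the sublinear velocity estimate of \te{(E5)} combined with Gronwall then propagates that control to the entire interval, independently of the arc. The only point requiring a little care is selecting the correct (forward versus backward) direction of the Gronwall estimate according to which endpoint the hypothesis on $\phi$ happens to pin down.
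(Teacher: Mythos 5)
Your proof is correct and follows essentially the same route as the paper's: reduce to $x(\cdot)\in\D\Gamma$, use $(x(0),x(1))\in\D\phi$ and the sublinear bound $|\dot x(t)|\leq c(t)(1+|x(t)|)$ from \te{(E5)} with Gronwall's Lemma to trap the arc in a fixed ball $\B_R$, then bound $\int_0^1 L$ below via \te{(HLC)} at $p=0$ and bound $\phi$ below on the compact set $\B_R\times\B_R$ by properness and lower semicontinuity. Your only departures are cosmetic: a slightly different constant $R=(1+M)\exp\bigl(\int_0^1 c\bigr)$ in place of the paper's $\bigl(M+\int_0^1 c\bigr)\exp\bigl(\int_0^1 c\bigr)$, and a welcome explicit treatment of the forward versus backward direction of Gronwall according to which endpoint $\min\{|x(0)|,|x(1)|\}\leq M$ controls, a point the paper leaves implicit.
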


\begin{proof}
Our assumptions and inequality \eqref{roz6-r1} imply that the functional $\Gamma[\cdot]$ is well defined  and $-\infty<\Gamma[x(\cdot)]\leq +\infty$. Without loss of generality we can assume that $\Gamma[x(\cdot)]<+\infty$  for some arc $x(\cdot)\in \mathcal{A}([0,1],\R^n)$. Then we have
\begin{equation*}
\phi(x(0),x(1))<+\infty\quad\tn{and}\quad\int_{0}^1L(t,x(t),\dot{x}(t))\,dt<+\infty.
\end{equation*}
Therefore, $(x(0),x(1))\in\D\phi$ and $\dot{x}(t)\in\D L(t,x(t),\cdot)$ for a.e. $t\in[0,1]$. The latter, together with our assumptions, implies that $\min\{\,|x(0)|,|x(1)|\,\}\leq M$ and $c(t)(1+|x(t)|)\geq |\dot{x}(t)|$ for a.e. $t\in[0,1]$. Therefore, because of Gronwall’s Lemma,
\begin{equation*}
\|x\|\leq\left(M+\int_0^1c(t)\,dt\right)\,\exp\left(\int_0^1c(t)\,dt\right)=:R.
\end{equation*} 
Since $\phi$ is proper, lower semicontinuous function, there exists $D>0$ such that $-D\leq\phi(z,x)$ for all $z,x\in\B_R$. From the above and  \eqref{roz6-r1} we obtain the inequality \eqref{roz6-nl1}.
\end{proof}

 If the triple $(B,f,l)$ is a representation of $H$, then $-|H(t,x,0)|\leq l(t,x,a)$ for all $t\in[0,T]$, $x\in\R^n$, $a\in\B$. Moreover, if $H(t,\cdot,0)$ is $k_R(t)$-Lipschitz on $\B_R$ for all $t\in[0,1]$ and $R>0$, then for all $(x,a)(\cdot)\in \mathcal{A}([0,1],\R^n)\times L^1([0,1],\B)$  we have
\begin{equation}\label{roz6-r2}
-k_{\|x\|}(t)\|x\|-|H(t,0,0)|\,\leq\, l(t,x(t),a(t)) \;\;\te{for all}\;\;t\in[0,1].
\end{equation}

\begin{Lem}\label{roz6-l2}
Assume that \te{(H1)$-$(H4)}, \te{(HLC)} and \te{(BLC)} hold with integrable functions $c(\cdot)$, $k_R(\cdot)$, $H(\cdot,0,0)$, $\lambda(\cdot,0)$.  We consider the representation $(\B,f,l)$ of $H$ defined as in Theorem~\ref{th-rprez-glo12}.  Assume further that $\phi$ is a proper, lower semicontinuous function  and  there exists $M\geq 0$ such that $\min\{\,|z|,|x|\,\}\leq M$ for all $(z,x)\in\D\phi$. Then there exist $D,R\geq 0$ such that for all $(x,a)(\cdot)\!\in \!\mathcal{A}([0,1],\R^n)\times L^1([0,1],\B)$ satisfying $\dot{x}(t)\!\!=\!\!f(t,x(t),a(t))$ we have
\begin{equation}\label{roz6-nl2}
-D-R\int_0^1k_R(t)\,dt-\int_0^1|H(t,0,0)|\,dt\,\leq\,\Lambda[(x,a)(\cdot)].
\end{equation}
\end{Lem}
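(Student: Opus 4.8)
The plan is to mirror the proof of Lemma \ref{roz6-l1} almost line for line, with the velocity estimate now supplied by property (A2) of the representation rather than by the domain bound (E5). First I would observe that $\Lambda[\cdot]$ is well defined and bounded below by a finite quantity. Since $f,l$ are Carathéodory functions and $x(\cdot)$ is absolutely continuous with $a(\cdot)\in L^1([0,1],\B)$, the map $t\mapsto l(t,x(t),a(t))$ is measurable; by \eqref{roz6-r2} it is dominated below by the integrable function $-k_{\|x\|}(t)\|x\|-|H(t,0,0)|$, so $\int_0^1 l(t,x(t),a(t))\,dt$ is well defined in $(-\infty,+\infty]$, and together with $\phi>-\infty$ this gives $-\infty<\Lambda[(x,a)(\cdot)]\le+\infty$. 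Without loss of generality I would then assume $\Lambda[(x,a)(\cdot)]<+\infty$, so that $\phi(x(0),x(1))<+\infty$ and $\int_0^1 l(t,x(t),a(t))\,dt<+\infty$.

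Next, from $\phi(x(0),x(1))<+\infty$ I would deduce $(x(0),x(1))\in\D\phi$, whence the hypothesis on $\phi$ yields $\min\{|x(0)|,|x(1)|\}\le M$. The one genuinely new step is the velocity bound: since $\dot{x}(t)=f(t,x(t),a(t))$ a.e., property (A2) of Theorem \ref{th-rprez-glo12} gives directly
\[
|\dot{x}(t)|=|f(t,x(t),a(t))|\le c(t)(1+|x(t)|)\quad\text{a.e. }t\in[0,1].
\]
This is exactly the estimate that in Lemma \ref{roz6-l1} was obtained from $\dot{x}(t)\in\D L(t,x(t),\cdot)$ together with Corollary \ref{wrow-wm} (E5); here it comes for free from the growth bound on $f$. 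With $\min\{|x(0)|,|x(1)|\}\le M$ and this differential inequality, Gronwall's Lemma produces the very same radius
\[
\|x\|\le\left(M+\int_0^1 c(t)\,dt\right)\exp\left(\int_0^1 c(t)\,dt\right)=:R
\]
as in the variational case.

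Finally, since $\phi$ is proper and lower semicontinuous, there is a constant $D>0$ with $-D\le\phi(z,x)$ for all $z,x\in\B_R$; applying this to $(x(0),x(1))$ and combining with the pointwise bound \eqref{roz6-r2} integrated over $[0,1]$ yields \eqref{roz6-nl2}. I expect no real obstacle here: the only substantive change from Lemma \ref{roz6-l1} is that the Gronwall input is furnished by the growth condition (A2) on $f$ instead of by (E5), after which the estimates are identical. The mild point to keep in mind is that \eqref{roz6-r2} must be invoked with the radius $R$ matching the Gronwall bound, so that the resulting constant $R\int_0^1 k_R(t)\,dt$ in \eqref{roz6-nl2} is finite thanks to the assumed integrability of $k_R(\cdot)$.
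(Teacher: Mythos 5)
Your proposal is correct and follows the paper's proof of this lemma essentially step for step: well-definedness and lower boundedness of $\Lambda[\cdot]$ via \eqref{roz6-r2}, the reduction to $\Lambda[(x,a)(\cdot)]<+\infty$, the bound $\min\{|x(0)|,|x(1)|\}\leq M$ from $(x(0),x(1))\in\D\phi$, the velocity estimate $|\dot{x}(t)|\leq c(t)(1+|x(t)|)$ from (A2), Gronwall's Lemma yielding the same radius $R$, and the lower bound $-D\leq\phi$ on $\B_R\times\B_R$ from properness and lower semicontinuity. Your explicit identification of (A2) as the source of the Gronwall input (the paper only says ``our assumptions'') and your remark about invoking \eqref{roz6-r2} with the radius $R$ are both accurate refinements, not deviations.
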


\begin{proof}
Our assumptions and inequality \eqref{roz6-r2}  imply that the functional $\Lambda[\cdot]$ is well defined and $-\infty<\Lambda[(x,a)(\cdot)]\leq+\infty$.  Without loss of generality we can assume that $\Lambda[(x,a)(\cdot)]<+\infty$  for some arc $(x,a)(\cdot)\in \mathcal{A}([0,1],\R^n)\times L^1([0,1],\B)$ satisfying $\dot{x}(t)=f(t,x(t),a(t))$ for a.e. $t\in[0,1]$. The latter, together with our assumptions, implies that $\min\{\,|x(0)|,|x(1)|\,\}\leq M$ and $|\dot{x}(t)|\leq c(t)(1+|x(t)|)$ for a.e. $t\in[0,1]$. Therefore, because of Gronwall’s Lemma,
\begin{equation*}
\|x\|\leq\left(M+\int_0^1c(t)\,dt\right)\,\exp\left(\int_0^1c(t)\,dt\right)=:R.
\end{equation*} 
Since $\phi$ is a proper, lower semicontinuous function, there exists $D>0$ such that $-D\leq\phi(z,x)$ for all $z,x\in\B_R$. From the above and   \eqref{roz6-r2} we obtain the inequality \eqref{roz6-nl2}.
\end{proof}

\begin{Rem}
Our assumptions and inequalities \eqref{roz6-r1} and \eqref{roz6-nl1} imply that the functional $\Gamma[\cdot]$ is well defined and $-\infty<\inf\Gamma[x(\cdot)]$. Similarly, our assumptions and inequalities \eqref{roz6-r2} and \eqref{roz6-nl2} imply that the functional $\Lambda[\cdot]$ is well defined and $-\infty<\inf\Lambda[(x,a)(\cdot)]$.
\end{Rem}

\begin{Th}\label{roz6-thm1}
Assume that \te{(H1)$-$(H4)}, \te{(HLC)} and \te{(BLC)} hold with integrable functions $c(\cdot)$, $k_R(\cdot)$, $H(\cdot,0,0)$, $\lambda(\cdot,0)$.  We consider the representation $(\B,f,l)$ of $H$ defined as in Theorem~\ref{th-rprez-glo12}. Assume further that $\phi$ is a proper, lower semicontinuous function  and  there exists $M\geq 0$ such that $\min\{\,|z|,|x|\,\}\leq M$ for all $(z,x)\in\D\phi$. Then
\begin{equation}\label{roz6-rthm1}
\inf\Gamma[x(\cdot)]\,=\,\inf\Lambda[(x,a)(\cdot)].
\end{equation}
\end{Th}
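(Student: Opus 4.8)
The plan is to establish \eqref{roz6-rthm1} by proving the two inequalities $\inf\Gamma\le\inf\Lambda$ and $\inf\Lambda\le\inf\Gamma$ separately. Throughout I use that $(\B,f,l)$ is the representation of $H$ from Theorem \ref{th-rprez-glo12}, so that $f(t,x,\B)=\D L(t,x,\cdot)$, that $L(t,x,f(t,x,a))\le l(t,x,a)$ for every $a\in\B$ by Lemma \ref{lem-tran-r1}, and that the epigraphical inclusion (A3), namely $\G L(t,x,\cdot)\subset\bigcup_{a\in\B}(f(t,x,a),l(t,x,a))$, holds for every $(t,x)$. By Lemmas \ref{roz6-l1} and \ref{roz6-l2} both functionals are bounded below, so both infima lie in $\R\cup\{+\infty\}$, and the degenerate case where one of them equals $+\infty$ is absorbed into the two inequalities below.

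First I would prove $\inf\Gamma\le\inf\Lambda$, which is the routine direction. Let $(x,a)(\cdot)$ be any admissible pair for $(\mathcal{P}_{c})$, so that $x(\cdot)\in\mathcal{A}([0,1],\R^n)$ and $\dot{x}(t)=f(t,x(t),a(t))$ for a.e.\ $t$. Then $\dot{x}(t)\in f(t,x(t),\B)=\D L(t,x(t),\cdot)$ a.e., and Lemma \ref{lem-tran-r1} yields
\begin{equation*}
L(t,x(t),\dot{x}(t))=L(t,x(t),f(t,x(t),a(t)))\le l(t,x(t),a(t))\quad\text{a.e. } t\in[0,1].
\end{equation*}
Integrating and adding the common endpoint term $\phi(x(0),x(1))$ gives $\Gamma[x(\cdot)]\le\Lambda[(x,a)(\cdot)]$, whence $\inf\Gamma\le\Lambda[(x,a)(\cdot)]$; taking the infimum over all admissible pairs yields $\inf\Gamma\le\inf\Lambda$.

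The substantial direction is $\inf\Lambda\le\inf\Gamma$, and here the main obstacle is a measurable-selection argument. Fix any arc $x(\cdot)\in\mathcal{A}([0,1],\R^n)$ with $\Gamma[x(\cdot)]<+\infty$ (if none exists then $\inf\Gamma=+\infty$ and the inequality is vacuous). Then $\int_0^1 L(t,x(t),\dot{x}(t))\,dt<+\infty$, so $L(t,x(t),\dot{x}(t))<+\infty$, i.e.\ $\dot{x}(t)\in\D L(t,x(t),\cdot)$, for a.e.\ $t$. Consequently the point $b(t):=(\dot{x}(t),L(t,x(t),\dot{x}(t)))$ lies in $\G L(t,x(t),\cdot)$ and, by (A3), in $\e(t,x(t),\B)$ for a.e.\ $t$, where $\e=(f,l)$. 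My aim is to select a measurable control $a(\cdot)$ with $\e(t,x(t),a(t))=b(t)$ a.e.

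To produce such a control I would argue as follows. The map $(t,a)\mapsto\e(t,x(t),a)$ is measurable in $t$ and continuous in $a$ (a Carath\'eodory map), because $\e(\cdot,x,a)$ is measurable, $\e(t,\cdot,\cdot)$ is continuous, and $x(\cdot)$ is continuous; and $t\mapsto b(t)$ is measurable by (L1) together with the measurability of $t\mapsto(x(t),\dot{x}(t))$. Hence the set-valued map
\begin{equation*}
S(t):=\{\,a\in\B \ \mid\ \e(t,x(t),a)=b(t)\,\}
\end{equation*}
has nonempty (by the previous paragraph), closed (by continuity of $\e(t,\cdot,\cdot)$ and compactness of $\B$) values for a.e.\ $t$ and is measurable, so a measurable selection theorem (e.g.\ \cite{A-F}) furnishes a measurable $a(\cdot)$ with $a(t)\in S(t)$ a.e. Being $\B$-valued on a bounded interval, $a(\cdot)\in L^1([0,1],\R^{n+1})$, and by construction $\dot{x}(t)=f(t,x(t),a(t))$ and $l(t,x(t),a(t))=L(t,x(t),\dot{x}(t))$ a.e. Thus $(x,a)(\cdot)$ is admissible for $(\mathcal{P}_{c})$ with $\Lambda[(x,a)(\cdot)]=\Gamma[x(\cdot)]$, so $\inf\Lambda\le\Gamma[x(\cdot)]$. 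Taking the infimum over all such arcs yields $\inf\Lambda\le\inf\Gamma$, and combining the two inequalities proves \eqref{roz6-rthm1}.
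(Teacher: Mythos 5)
Your proposal is correct and follows essentially the same route as the paper: one inequality via Lemma \ref{lem-tran-r1} (giving $L(t,x(t),\dot{x}(t))\le l(t,x(t),a(t))$ a.e.\ along admissible pairs), and the other via (A3) together with a Filippov-type measurable selection, which the paper obtains by citing \cite[Thm. 8.2.10]{A-F} while you prove inline through the measurable closed-valued map $S(t)$. The only cosmetic differences are that the paper works with $\varepsilon$-optimal arcs and encodes $L(t,\kr{x}(t),\dot{\kr{x}}(t))$ as the derivative of the auxiliary function $\kr{u}(t)=\int_0^t L(s,\kr{x}(s),\dot{\kr{x}}(s))\,ds$, whereas you compare the functionals pointwise and verify the measurability of $b(t)$ directly.
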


\begin{proof}
We start with the proof of the inequality:
\begin{equation}\label{roz6-rthm011}
\inf\Gamma[x(\cdot)]\,\geq\,\inf\Lambda[(x,a)(\cdot)].
\end{equation}
Without loss of generality we can assume that $-\infty<\inf\Gamma[x(\cdot)]<+\infty$. Let us fix  $\varepsilon>0$. Then there exists  $\kr{x}(\cdot)\in \mathcal{A}([0,1],\R^n)$ such that $\inf\Gamma[x(\cdot)]+\varepsilon\,\geq\,\Gamma[\kr{x}(\cdot)]$. We define $\kr{u}(\cdot)\in \mathcal{A}([0,1],\R)$ by the formula
$$\kr{u}(t):=\int_0^tL(s,\kr{x}(s),\dot{\kr{x}}(s))\,ds.$$
We notice that $\dot{\kr{u}}(t)=L(t,\kr{x}(t),\dot{\kr{x}}(t))$  for a.e. $t\in[0,1]$. Therefore, $(\dot{\kr{x}}(t),\dot{\kr{u}}(t))\in\G L(t,\kr{x}(t),\cdot)$ for a.e. $t\in[0,1]$. By (A3) of Theorem \ref{th-rprez-glo12}, $\G L(t,\kr{x}(t),\cdot)\subset\e(t,\kr{x}(t),\B)$ for all $t\in[0,1]$, where $\e(t,x,\cdot)=(f(t,x,\cdot),l(t,x,\cdot))$. From the above and \cite[Thm. 8.2.10]{A-F} there exists a measurable function $\kr{a}(\cdot)$ defined on $[0,1]$ with values in $\B$ such that  for a.e. $t\in[0,1]$,
$$(\dot{\kr{x}}(t),\dot{\kr{u}}(t))=\e(t,\kr{x}(t),\kr{a}(t))=(f(t,\kr{x}(t),\kr{a}(t)),l(t,\kr{x}(t),\kr{a}(t))).$$
Consequently, we have
\vspace{-3mm}
\begin{eqnarray*}
\inf\Gamma[x(\cdot)]+\varepsilon &\geq & \Gamma[\kr{x}(\cdot)]\;\;=\;\;\phi(\kr{x}(0),\kr{x}(1))+\int_0^1L(t,\kr{x}(t),\dot{\kr{x}}(t))\,dt\\
&=& \phi(\kr{x}(0),\kr{x}(1))+\int_0^1\dot{\kr{u}}(t)\,dt\\
&=& \phi(\kr{x}(0),\kr{x}(1))+\int_0^1l(t,\kr{x}(t),\kr{a}(t))\,dt\\[2mm]
&=& \Lambda[(\kr{x},\kr{a})(\cdot)]\;\;\geq\;\; \inf\Lambda[(x,a)(\cdot)].
\end{eqnarray*}
Therefore, $\inf\Gamma[x(\cdot)]+\varepsilon\geq\inf\Lambda[(x,a)(\cdot)]$.  The latter inequality, together with the arbitrariness of $\varepsilon>0$, implies  \eqref{roz6-rthm011}.

Now we prove the inequality:
\begin{equation}\label{roz6-rthm021}
\inf\Gamma[x(\cdot)]\,\leq\,\inf\Lambda[(x,a)(\cdot)].
\end{equation}
Without loss of generality we can assume that $-\infty<\inf\Lambda[(x,a)(\cdot)]<+\infty$. Let us fix  $\varepsilon>0$. Then there exists $(\kr{x},\kr{a})(\cdot)\in \mathcal{A}([0,1],\R^n)\times L^1([0,1],\B)$ satisfying $\dot{\kr{x}}(t)=f(t,\kr{x}(t),\kr{a}(t))$ for a.e. $t\in[0,1]$ and $\inf\Lambda[(x,a)(\cdot)]+\varepsilon\geq\Lambda[(\kr{x},\kr{a})(\cdot)]$. From our assumptions and Lemma \ref{lem-tran-r1},
$$L(t,\kr{x}(t),f(t,\kr{x}(t),\kr{a}(t)))\leq l(t,\kr{x}(t),\kr{a}(t))\;\;\tn{for all}\;\;t\in[0,1].$$
From the above, $L(t,\kr{x}(t),\dot{\kr{x}}(t))\leq l(t,\kr{x}(t),\kr{a}(t))$ for a.e. $t\in[0,1]$.  Consequently, we have
\begin{eqnarray*}
\inf\Lambda[(x,a)(\cdot)]+\varepsilon &\geq & \Lambda[(\kr{x},\kr{a})(\cdot)]\\
&=& \phi(\kr{x}(0),\kr{x}(1))+\int_0^1l(t,\kr{x}(t),\kr{a}(t))\,dt\\
&\geq & \phi(\kr{x}(0),\kr{x}(1))+\int_0^1L(t,\kr{x}(t),\dot{\kr{x}}(t))\,dt\\[2mm]
&=& \Gamma[\kr{x}(\cdot)]\;\;\geq\;\; \inf\Gamma[x(\cdot)].
\end{eqnarray*}
Therefore, $\inf\Lambda[(x,a)(\cdot)]+\varepsilon\geq\inf\Gamma[x(\cdot)]$.  The latter inequality, together with the arbitrariness of $\varepsilon>0$, implies  \eqref{roz6-rthm021}.

Combining inequalities \eqref{roz6-rthm011} and \eqref{roz6-rthm021} we obtain the equality \eqref{roz6-rthm1}. 
\end{proof}

\begin{Rem}\label{roz6-rem1}
From the equality \eqref{roz6-rthm1} and its proof it follows that if $\kr{x}(\cdot)$ is the optimal arc of $(\mathcal{P}_{v})$ such that $\kr{x}(\cdot)\in\D\Gamma$, then there exists  $\kr{a}(\cdot)$ such that $(\kr{x},\kr{a})(\cdot)$ is also the optimal arc of $(\mathcal{P}_{c})$ and $(\kr{x},\kr{a})(\cdot)\in\D\Lambda$;  conversely, if $(\kr{x},\kr{a})(\cdot)$ is the optimal arc of $(\mathcal{P}_{c})$, then $\kr{x}(\cdot)$ is also the optimal arc of $(\mathcal{P}_{v})$.
\end{Rem}

\begin{Th}\label{roz6-thm2}
Assume that \te{(H1)$-$(H4)} and \te{(HLC)} hold with integrable functions $c(\cdot)$, $k_R(\cdot)$, $H(\cdot,0,0)$.  Assume further that $\phi$ is a proper, lower semicontinuous function  and  there exists $M\geq 0$ such that $\min\{\,|z|,|x|\,\}\leq M$ for all $(z,x)\in\D\phi$. Then there exists the optimal arc $\kr{x}(\cdot)$ of the variational problem  $(\mathcal{P}_{v})$.
\end{Th}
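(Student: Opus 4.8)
The plan is to use the direct method of the calculus of variations. If $\inf\Gamma[x(\cdot)]=+\infty$, then every arc attains this value and the claim is trivial, so I may assume $\inf\Gamma[x(\cdot)]<+\infty$; by Lemma \ref{roz6-l1} the infimum is also $>-\infty$, hence finite. First I would choose a minimizing sequence $x_i(\cdot)\in\mathcal{A}([0,1],\R^n)$ with $\Gamma[x_i(\cdot)]\to\inf\Gamma[x(\cdot)]$, retaining only the indices with $\Gamma[x_i(\cdot)]<+\infty$, so that each $x_i(\cdot)\in\D\Gamma$.

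Next I would extract a convergent subsequence by reproducing the a priori estimates from the proof of Lemma \ref{roz6-l1}. For each such $x_i(\cdot)$ one has $(x_i(0),x_i(1))\in\D\phi$ and $\dot{x}_i(t)\in\D L(t,x_i(t),\cdot)$ a.e., whence $\min\{|x_i(0)|,|x_i(1)|\}\le M$ and, by \te{(E5)}, $|\dot{x}_i(t)|\le c(t)(1+|x_i(t)|)$ a.e.; Gronwall's Lemma then produces the common bound $\|x_i\|\le R$ with the same $R$ as in Lemma \ref{roz6-l1}. Hence $|\dot{x}_i(t)|\le c(t)(1+R)$ a.e., so the derivatives are dominated by the fixed integrable function $c(\cdot)(1+R)$ and are therefore equi-integrable. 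By the Dunford--Pettis theorem a subsequence satisfies $\dot{x}_i\rightharpoonup w$ weakly in $L^1([0,1],\R^n)$, while the same domination yields equicontinuity and uniform boundedness of $\{x_i\}$, so by Arzel\`a--Ascoli I may also assume $x_i\to\kr{x}$ uniformly. Passing to the limit in $x_i(t)=x_i(0)+\int_0^t\dot{x}_i(s)\,ds$ (the map $y\mapsto\int_0^t y$ being continuous for the weak $L^1$ topology) shows $\kr{x}(\cdot)\in\mathcal{A}([0,1],\R^n)$ with $\dot{\kr{x}}=w$.

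Then I would verify the lower semicontinuity $\Gamma[\kr{x}(\cdot)]\le\liminf_i\Gamma[x_i(\cdot)]$. The boundary term is controlled by the lower semicontinuity of $\phi$ together with $x_i(0)\to\kr{x}(0)$ and $x_i(1)\to\kr{x}(1)$. For the integral term, conditions \te{(L1)$-$(L3)} say precisely that $L$ is a convex normal integrand (measurable in $t$, lower semicontinuous in $(x,v)$, convex in $v$), and the computation behind \eqref{roz6-r1}, applied with the common radius $R$, furnishes the uniform integrable minorant $L(t,x_i(t),\dot{x}_i(t))\ge -k_R(t)\,R-|H(t,0,0)|$. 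Under the uniform convergence $x_i\to\kr{x}$ and the weak convergence $\dot{x}_i\rightharpoonup\dot{\kr{x}}$ in $L^1$, the classical lower semicontinuity theorem for convex integral functionals (of Tonelli--Olech--Ioffe type; see, e.g., \cite{FC}) gives $\int_0^1 L(t,\kr{x}(t),\dot{\kr{x}}(t))\,dt\le\liminf_i\int_0^1 L(t,x_i(t),\dot{x}_i(t))\,dt$. Adding the two estimates, $\Gamma[\kr{x}(\cdot)]\le\liminf_i\Gamma[x_i(\cdot)]=\inf\Gamma[x(\cdot)]$; since $\kr{x}(\cdot)$ is admissible, equality holds and $\kr{x}(\cdot)$ is the sought optimal arc of $(\mathcal{P}_{v})$.

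The hard part is the lower semicontinuity of the integral term. Because $L(t,\cdot,\cdot)$ is only lower semicontinuous and genuinely extended-real-valued (it equals $+\infty$ off $\D L(t,x,\cdot)$, as in Examples \ref{ex-1}, \ref{ex-2}, \ref{ex-4}), one cannot argue by continuity of the Lagrangian; the convexity in the velocity variable \te{(L3)} together with the normal-integrand structure \te{(L1)$-$(L2)} is exactly what makes the weak-$L^1$ lower semicontinuity theorem applicable, and the integrability of $c(\cdot)$, $k_R(\cdot)$, $H(\cdot,0,0)$ is what secures the integrable minorant along the minimizing sequence. By comparison the compactness step is routine, resting only on the Gronwall bound and the equi-integrability of the derivatives.
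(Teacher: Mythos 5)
Your proposal is correct and follows essentially the same route as the paper's proof: a minimizing sequence, the bounds $\min\{|x_i(0)|,|x_i(1)|\}\leq M$ and $|\dot{x}_i(t)|\leq c(t)(1+|x_i(t)|)$ from $\D\phi$ and \te{(L5)}, Gronwall's Lemma, compactness via Arzel\`a--Ascoli and Dunford--Pettis, and weak-$L^1$ lower semicontinuity of the convex integral functional (the paper invokes Rockafellar's Semicontinuity Theorem from \cite{RTR}, which is the same Tonelli--Olech--Ioffe-type result you cite, applicable thanks to \te{(L1)$-$(L3)} and your integrable minorant $-k_R(t)R-|H(t,0,0)|$). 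Your additional details --- the trivial case $\inf\Gamma=+\infty$, equi-integrability of the derivatives, and the identification $\dot{\kr{x}}=w$ by passing to the limit in the integral representation --- merely make explicit what the paper leaves implicit.
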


\begin{proof}
Without loss of generality we can assume that  $-\infty<\inf\Gamma[x(\cdot)]<+\infty$. Then there exists  a sequence $\{\,x_i(\cdot)\,\}\subset\mathcal{A}([0,1],\R^n)$ such that for all $i\in\N$ we have
\begin{equation*}
\inf\Gamma[x(\cdot)]\leq\Gamma[x_i(\cdot)]\leq\inf\Gamma[x(\cdot)]+1/i.
\end{equation*}
Since $\Gamma[x_i(\cdot)]<+\infty$ for all $i\in\N$, we  deduce that for all $i\in\N$
\begin{equation*}
\phi(x_i(0),x_i(1))<+\infty\quad\tn{and}\quad\int_{0}^1L(t,x_i(t),\dot{x}_i(t))\,dt<+\infty.
\end{equation*}
Therefore, $(x_i(0),x_i(1))\in\D\phi$ and $\dot{x}_i(t)\in\D L(t,x_i(t),\cdot)$, for a.e. $t\in[0,1]$ and for all $i\in\N$. The latter, together with our assumptions, implies that $\min\{\,|x_i(0)|,|x_i(1)|\,\}\leq M$ and $|\dot{x}_i(t)|\leq c(t)(1+|x_i(t)|)$ for a.e. $t\in[0,1]$ and for all $i\in\N$. Therefore, because of Gronwall’s Lemma, for all $i\in\N$
\begin{equation*}
\|x_i\|\leq\left(M+\int_0^1c(t)\,dt\right)\,\exp\left(\int_0^1c(t)\,dt\right)=:R.
\end{equation*} 
From the above we obtain $\|x_i\|\leq R$ and $|\dot{x}_i(t)|\leq c(t)(1+R)$ for a.e. $t\in[0,1]$ and for all $i\in\N$. Because of Arzel\`a-Ascoli and Dunford-Pettis Theorems, there exists a subsequence (denoted again by $\{x_i\}$) such that $x_i\rightrightarrows\kr{x}$ and $\dot{x}_i\rightharpoonup\dot{\kr{x}}$ in $L^1([0,1],\R^n)$. Therefore, because of \cite[Semicontinuity Theorem]{RTR},
\begin{equation*}
\liminf_{i\to\infty}\int_0^1L(t,x_i(t),\dot{x}_i(t))\,dt\;\geq\,\int_0^1L(t,\kr{x}(t),\dot{\kr{x}}(t))\,dt.
\end{equation*}
Consequently, we have
\begin{eqnarray*}
\inf\Gamma[x(\cdot)] &=& \lim_{i\to\infty}\Gamma[x_i(\cdot)]\\
&\geq & \liminf_{i\to\infty}\phi(x_i(0),x_i(1))+\liminf_{i\to\infty}\int_0^1L(t,x_i(t),\dot{x}_i(t))\,dt\\
&\geq & \phi(\kr{x}(0),\kr{x}(1))+\int_0^1L(t,\kr{x}(t),\dot{\kr{x}}(t))\,dt\\[2mm]
&=& \Gamma[\kr{x}(\cdot)]\;\;\geq\;\;\inf\Gamma[x(\cdot)].
\end{eqnarray*}
Therefore, $\inf\Gamma[x(\cdot)]=\Gamma[\kr{x}(\cdot)]$. Hence, it follows that $\kr{x}(\cdot)$ is the optimal arc of the variational problem  $(\mathcal{P}_{v})$.
\end{proof}

\begin{Rem}
Theorems \ref{roz6-thm1} and \ref{roz6-thm2}, together with Remark \ref{roz6-rem1}, imply Theorem \ref{thm-reduct}.
\end{Rem}



\begin{thebibliography}{99}


\bibitem{A-F} \textsc{J.-P. Aubin, H. Frankowska}, {\em Set-Valued Analysis}, Birkh\"{a}user, 1990, (Modern Birkh\"{a}user Classics, reprint 2008).

\bibitem{B-CD} \textsc{M. Bardi, I. Capuzzo-Dolcetta}, {\em Optimal control and viscosity solutions of Hamilton-Jacobi-Bellman equations}, Birk\-h\"{a}user, Boston 1997.

\bibitem{C-S-2004} \textsc{P. Cannarsa, C. Sinestrari}, {\em Semiconcave Functions, Hamilton-Jacobi Equations, and Optimal Control}, in: Progress in Nonlinear Differential Equations
and their Applications, vol. 58, Birkhäuser, Boston, MA, 2004.




\bibitem{FC} \textsc{F.H. Clarke}, {\em Optimization and nonsmooth analysis}, New York: Wiley, 1983.

\bibitem{HF} \textsc{H. Frankowska}, {\em Lower semicontinuous solutions of Hamilton-Jacobi-Bellman equations}, SIAM J. Control Optim., {31} (1993), 257--272.

\bibitem{F-P-Rz} \textsc{H. Frankowska, S.Plaskacz, T.Rzeżuchowski}, {\em Measurable viability theorems and Hamilton-Jacobi-Bellman equation}, J. Differential Equations, {116} (1995), 265--305.


\bibitem{F-S} \textsc{H. Frankowska, H. Sedrakyan},  {\em Stable representation of convex Hamiltonians}, Nonlinear Anal., {100} (2014), 30--42.



\bibitem{HI} \textsc{H. Ishii}, in: K. Masuda, M. Mimura (Eds.), {\em On Representations of Solutions of Hamilton-Jacobi Equations with Convex Hamiltonians}, in: Recent Topics
in Nonlinear PDE, II, vol. 128, North Holland, Amsterdam, (1985), 15--52.


\bibitem{AM} \textsc{A. Misztela}, {\em Representation of Hamilton-Jacobi equation in optimal control theory with unbounded control set}, arXiv:1807.03640.


\bibitem{CO-69}\textsc{ C. Olech},  {\em Existence theorems for optimal problems with vector-valued cost functions}, Trans. Amer. Math. Soc., 136 (1969), 159-180.

\bibitem{FR} \textsc{F. Rampazzo},  {\em Faithful representations for convex Hamilton-Jacobi equations}, SIAM J. Control Optim., {44(3)} (2005), 867--884.


\bibitem{RTR73} \textsc{R.T. Rockafellar}, {\em Optimal arcs and the minimum value function in problems of Lagrange}, Trans. Amer. Math. Soc., {180} (1973),  53--84.

\bibitem{RTR} \textsc{R.T. Rockafellar}, {\em Existence theorems for general control problems of Bolza and Lagrange}, Adv. Math., {15} (1975),  312--333.

\bibitem{R-W} \textsc{R.T. Rockafellar, R. J.-B. Wets}, {\em Variational Analysis}, Springer-Verlag, Berlin 1998.

\bibitem{HS} \textsc{H. Sedrakyan},  {\em Stability of solutions to Hamilton-Jacobi equations
under state constraints},  J. Optim. Theory Appl., {168} (2016), 63--91. 


\end{thebibliography}
\end{document}